\documentclass[11pt,letterpaper]{amsart}

\usepackage{amsmath, amscd, amssymb}
\usepackage[frame,cmtip,arrow,matrix,line,graph,curve]{xy}
\usepackage{graphpap, color}
\usepackage[mathscr]{eucal}
\usepackage{cancel}
\usepackage{verbatim}
\usepackage{adjustbox}
\usepackage{tikz}
\usepackage{float}
\usepackage{booktabs}
\usepackage{multirow}
\usepackage{tabularx}
\usepackage{bbm}
\usepackage{stmaryrd}
\usepackage{mathtools}

\usepackage{hyperref}
\usepackage{listings}

\setlength{\evensidemargin}{\oddsidemargin}

\numberwithin{equation}{section}

\newtheorem{theorem}{Theorem}[section]
\newtheorem{corollary}[theorem]{Corollary}
\newtheorem{proposition}[theorem]{Proposition}
\newtheorem{lemma}[theorem]{Lemma}

\newtheorem{conjecture}[theorem]{Conjecture}

\theoremstyle{definition}
\newtheorem{definition}[theorem]{Definition}
\newtheorem{question}[theorem]{Question}
\newtheorem{example}[theorem]{Example}
\newtheorem{remark}[theorem]{Remark}

\newcommand{\Z}{\mathbb{Z}}
\newcommand{\Q}{\mathbb{Q}}

\newcommand{\C}{\mathbb{C}}

\newcommand{\PP}{\mathbb{P}}

\newcommand{\chrom}{\mathbf{X}}

\def\CC{\mathbb{C}}

\def\FF{\mathbb{F}}

\def\QQ{\mathbb{Q}}

\def\RR{\mathbb{R}}

\def\ZZ{\mathbb{Z}}


\def\sO{{\mathscr O}}

\def\sK{\mathscr{K}}

\def\sT{\mathscr{T}}

\newcommand{\cal}{\mathcal}

\def\cM{{\cal M}}

\def\sO{{\cal O}}


\def\chrom{\mathscr{X}}





\def\hbar{\overline{h}}

\def\Mbar{\overline{\cM}}

\def\GL{\mathrm{GL} }
\def\SL{\mathrm{SL} }


\DeclareMathOperator{\Aut}{Aut}

\DeclareMathOperator{\Exp}{Exp}

\def\Ind{\mathrm{Ind} }

\def\Stab{\mathrm{Stab} }
\def\dim{\mathrm{dim} }

\def\log{\mathrm{log} }

\def\ch{\mathrm{ch} }

\def\multiset#1#2{\ensuremath{\left(\kern-.3em\left(\genfrac{}{}{0pt}{}{#1}{#2}\right)\kern-.3em\right)}}
\def\multisetBig#1#2{\ensuremath{\Big(\kern-.3em\Big(\genfrac{}{}{0pt}{}{#1}{#2}\Big)\kern-.3em\Big)}}
\def\multisetbody#1#2{\ensuremath{\big(\kern-.3em\big(\genfrac{}{}{0pt}{}{#1}{#2}\big)\kern-.3em\big)}}

\def\and{\quad{\rm and}\quad}
\def\lra{\longrightarrow }
\def\mapright#1{\,\smash{\mathop{\lra}\limits^{#1}}\,}

\def\beq{\begin{equation}}
\def\eeq{\end{equation}}
\def\ben{\begin{enumerate}}
\def\een{\end{enumerate}}

\def\and{\quad\text{and}\quad}

\def\a{\alpha}
\def\d{\delta}

\def\w{\omega}

\def\sfs{\mathsf{s}}
\def\sfp{\mathsf{p}}
\def\sfh{\mathsf{h}}
\def\sfe{\mathsf{e}}

\def\stan{\mathscr{S}}
\def\stanomega{\stan^{\omega}}
\def\charP{\mathscr{P}}
\def\charQ{\mathscr{Q}}
\def\symS{\mathbb{S}}
\def\csf{\mathsf{csf}}
\def\cqsf{\mathsf{cqsf}}
\def\sgn{\mathsf{sgn}}
\def\Par{\mathrm{Par}}
\def\centz{z}

\title{Characteristic polynomial of $\overline{\mathcal{M}}_{0,n}$ and log-concavity}
\date{October 12, 2025}

\author{Jinwon Choi}
\address{Department of Mathematics and Research Institute of Natural Sciences, Sookmyung Women's University, Seoul 04310, Korea}
\email{jwchoi@sookmyung.ac.kr}

\author{Young-Hoon Kiem}
\address{School of Mathematics, Korea Institute for Advanced Study, 85 Hoegiro, Dongdaemun-gu, Seoul 02455, Korea}
\email{kiem@kias.re.kr}

\author{Donggun Lee}
\address{Center for Complex Geometry, Institute for Basic Science (IBS), 55 Expo-ro, Yuseong-gu, Daejeon 34126, Korea}
\email{dglee@ibs.re.kr}

\makeatletter
\@namedef{subjclassname@2020}{\textup{2020} Mathematics Subject Classification}
\makeatother
\subjclass[2020]{14H10, 14C15, 20C30, 05E05, 14N10.}
\thanks{JC was partially supported by NRF grant 2018R1C1B6005600. 
DL was supported by the Institute for Basic Science IBS-R032-D1 and Korea NRF grant 2021R1A2C1093787.}

\setcounter{tocdepth}{1}
\begin{document}

\begin{abstract}
Motivated by Stanley's generalization of the chromatic polynomial of a graph to the chromatic symmetric function, we introduce the characteristic polynomial of a representation of the symmetric group, or more generally, of a symmetric function. When the representation arises from geometry, the coefficients of its characteristic polynomial tend to form a log-concave sequence. To illustrate, we investigate explicit examples, including the $n$-fold products of the projective spaces, the GIT moduli spaces of points on $\mathbb{P}^1$ and Hessenberg varieties. 
Our main focus lies on the cohomology of the moduli space of pointed rational curves, for which we prove asymptotic formulas of its characteristic polynomial and establish asymptotic log-concavity.  
\end{abstract}

\maketitle
\tableofcontents

\section{Introduction}

In \cite{Sta2}, Stanley introduced the chromatic symmetric function of a graph, as a symmetric function generalization of the chromatic polynomial of a graph. In this paper, by taking the inverse approach, we associate a polynomial, called the \emph{characteristic polynomial}, to each symmetric function. Although the polynomials we consider are not the chromatic polynomials of graphs in general, those obtained from geometric contexts appear to share one of the most interesting features of chromatic polynomials, namely the \emph{log-concavity} (cf.~\S\ref{S2}).

\subsection{Chromatic polynomial and characteristic polynomial} 
The \emph{chromatic polynomial} of a graph $\Gamma$ is 
a polynomial $\mathscr{X}_\Gamma(m)\in \Q[m]$ such that for each $m_0\in \mathbb{N}=\Z_{\geq1}$, the value $\mathscr{X}_\Gamma(m_0)$ equals the number of proper colorings of the vertices of $\Gamma$ using $m_0$ colors. Here, a coloring is called \emph{proper} if any two adjacent vertices (that is, vertices connected by an edge) take different colors. In this paper, we regard $\mathscr{X}_\Gamma(m)$ as a polynomial in the variable $m$, and write $\mathscr{X}_\Gamma(m_0)$ for its evaluation at a positive integer $m_0$.

The \emph{chromatic symmetric function} $\csf_\Gamma$ of  $\Gamma$ is defined by
\[\csf_{\Gamma}=\sum_{\substack{\kappa:V(\Gamma)\to \Z_{\geq1},\\\mathrm{proper}}}
\prod_{v\in V(\Gamma) }x_{\kappa(v)}\]
where $\{x_i\}_{i\in \Z_{\geq 1}}$ are formal variables and $V(\Gamma)$ is the set of vertices of $\Gamma$. This is obviously a symmetric function in $x_i$ and the substitution
\beq \label{4}x_1=x_2=\cdots=x_{m_0}=1 \and x_{m_0+1}=x_{m_0+2}=\cdots =0\eeq
to $\csf_\Gamma$ gives the integer $\mathscr{X}_\Gamma(m_0)$. In this sense, $\csf_{\Gamma}$ can be considered as a generalization of $\mathscr{X}_\Gamma$. However, the log-concavity proved in \cite{Huh} appears only after the substitution \eqref{4} and hence it makes sense to extend the chromatic polynomials to more general contexts when one is interested in the log-concavity. 

In this paper, we first observe that the process of producing a polynomial out of a symmetric function by the substitution \eqref{4} works not only for chromatic symmetric functions of graphs but also for all symmetric functions. 
Indeed, the substitution \eqref{4} produces a map $\stan_F:\mathbb{N}\to \QQ$ for any symmetric function $F$ in the variables $x_i$ with rational coefficients. 
It is straightforward to check that $\stan_F$ is a polynomial function whose associated polynomial in $\QQ[m]$ is called the \emph{characteristic polynomial} $\stan_F(m)$ of $F$ (Definition~\ref{def:charpoly}).

\smallskip

It turns out that this definition is quite natural. To explain this, we first write it in a different but equivalent form. 
Let $\Lambda$ be the power series ring of the symmetric functions in $x_i$ over $\Q$, i.e.
\[\Lambda=\lim_{\longleftarrow}\Q\llbracket x_1,\cdots, x_n\rrbracket^{\symS_n},\]
and let $\Lambda_n\subset \Lambda$ be the subspace of elements of degree $n$.
Let $\sfp_n=\sum_{i\geq 1}x_i^n$ be the $n$th power sum symmetric function for $n\geq 1$ and let $\sfp_0=1$. It is well-known that $\Lambda\cong \Q\llbracket \sfp_1,\sfp_2,\cdots \rrbracket$. The map associating the characteristic polynomial to each symmetric function
\[\widehat\stan:\Lambda \lra \Q[m]\llbracket q\rrbracket, \quad (F\in \Lambda_n)\mapsto \stan_F(m)q^n\]
is precisely the $\Q$-algebra homomorphism 
which sends $\sfp_n$ to $mq^n$ for every $n\geq 1$, where $q$ is a formal variable. 

A remarkable feature of this map is that it behaves nicely with \emph{plethysm} $\circ$ on $\Lambda$ (see~\S\ref{ss:plethysm}). 
In fact, it converts the plethysm $F\circ G$ of symmetric functions $F\in \Lambda_n$ and $G\in \Lambda_r$ 
to the composition $(\stan_F\circ \stan_G)(m)=\stan_F(\stan_G(m))$ 
(Proposition~\ref{p:plethysm}). 
Conversely, this feature uniquely determines $\widehat\stan$ 
as a $\Q$-algebra homomorphism $\Lambda\to \Q[m]\llbracket q\rrbracket$ 
(Theorem~\ref{thm:characterize.chi}).

\smallskip

We can also define the characteristic polynomial of a symmetric group representation.
Let $\symS_n$ denote the symmetric group of $n$ letters and $R_n$ be the $\Q$-vector space spanned by irreducible $\symS_n$-representations. Let 
\[\ch=\ch_{\symS_n}:R_n\xrightarrow{~\cong~}\Lambda_n\]
be the Frobenius characteristic map (cf.~\eqref{eq:Frob}). The \emph{characteristic polynomial} of an $\symS_n$-module $V$ is defined by $\stan_V(m)=\stan_{\ch_{\symS_n}(V)}(m)$.

\subsection{Characteristic polynomials of varieties}
In this paper, we work out the characteristic polynomials of 
examples from geometry, especially the cohomology of varieties
equipped with an action of $\symS_n$. 
These include the $n$-fold products of the projective spaces 
(cf.~\S\ref{S4.1}) and the GIT moduli space of $n$ points on $\PP^1$ for odd $n$ (cf.~\S\ref{S4.2}). In both cases, the $\symS_n$-action that permutes the $n$ ordered points induces graded $\symS_n$-representations on the cohomology. We compute the characteristic polynomials of these representations and establish suitable log-concavity.

Another remarkable example is a regular semisimple \emph{Hessenberg variety} $X_h$ introduced in \cite{DMPS}. It is a subvariety in the flag variety of $\C^n$, defined by a function $h:\{1,\cdots, n\}\to \{1,\cdots, n\}$ satisfying $h(i)\ge i$ and $h(i)\ge h(i-1)$ for all $i>1$.
There is a natural action of $\symS_n$ on its cohomology $H(X_h)$ by \cite{Tym} whose characteristic polynomial satisfies
\beq\label{y40} \omega\ch_{\symS_n} H(X_h)=\csf_{\Gamma_h}\eeq
where $\omega$ is the involution on $\Lambda$ that sends $\sfp_n$ to $(-1)^n\sfp_n$ and $\Gamma_h$ is 
the incomparability graph associated to $h$ (cf.~Theorem~\ref{thm:SWconj}). While this equality can be written in a graded version, we stick to this simpler form here. 
In particular, applying $\stan$ to \eqref{y40}, we get  
\beq\label{y41} (-1)^n\stan_{H(X_h)}(-m)=\chrom_{\Gamma_h}(m).\eeq
By \cite{Huh}, the characteristic polynomial $\stan_{H(X_h)}(m)$ of the Hessenberg variety is log-concave. See \S\ref{ss:Hess} for further details. 

\subsection{Characteristic polynomial of $\Mbar_{0,n}$}
The main focus of this paper lies on the moduli space $\Mbar_{0,n}$ of stable rational curves with $n$ marked points.
See \cite{CKL, CKL2} for basic facts about this moduli space and its cohomology. 
This is a case where the characteristic polynomial is not the chromatic polynomial of any graph.
The $\symS_n$-action on $\Mbar_{0,n}$ permuting the $n$ marked points gives rise to a graded $\symS_n$-representation on its cohomology $H^*(\Mbar_{0,n})$. Consequently,
we obtain a bivariate polynomial 
\[\charP_n(m,t)=\sum_{k=0}^{n-3}\charP_{n,k}(m)t^k ~\in \Q[m,t], \quad \text{ where }~\charP_{n,k}(m)=\stan_{\ch_{\symS_n}H^{2k}(\Mbar_{0,n})}(m)\]
for each $k\geq0$.
We write the generating series of $\charP_n$ as 
\[\charP=1+mq+\frac{m(m+1)}{2}q^2+\sum_{n\geq 3}\charP_n(m,t)q^n~\in \Q[m,t]\llbracket q\rrbracket\]
for a formal parameter $q$ recording $n$. 

\smallskip

Recursive formulas for computing $\charP_n$ can be deduced from \cite{CKL,CKL2}, where we developed recursive formulas for $\ch_{\symS_n}H^*(\Mbar_{0,n})$ that use only arithmetic operations and plethysm. See \cite[Theorem~1.3]{CKL2} or \S\ref{ss:recursion.rep} and \ref{ss:recursion.char.poly}.

To present explicit formulas, let $\charQ_{n,k}$ denote the characteristic polynomial 
\[\charQ_{n,k}(m)=\stan_{\ch_{\symS_n}H^{2k}(\Mbar_{0,n+1})}(m)\]
of the $\symS_n$-representation on $H^{2k}(\Mbar_{0,n+1})$ induced by the action that permutes the first $n$ markings while keeping the last fixed. The advantage of $\charQ_{n,k}$ is that there is a rather straightforward combinatorial formula 
	\[\ch_{\symS_n}H^{2k}(\Mbar_{0,n+1})=\sum_{T\in \sT_{n,k}}\ch_{\symS_n}(U_T)\]
for the Frobenius characteristic  
in terms of \emph{weighted rooted trees} $T$ and the associated permutation representations $U_T$ by \cite[Proposition~5.12]{CKL}. In particular, we have 
\beq\label{6}\charQ_{n,k}=\sum_{T\in \sT_{n,k}}\stan_{\ch_{\symS_n}(U_T)}.\eeq
For precise definitions of $T$, $\sT_{n,k}$ and $U_T$, we refer the reader to \S\ref{s:M0n}.

We define the generating series of $\charQ_{n,k}$ by
\[\charQ=1+mq+\sum_{n\geq 2}\charQ_n(m,t)q^n ~\in \Q[m,t]\llbracket  q\rrbracket, \quad \text{ where }~\charQ_n(m,t)=\sum_{k=0}^{n-2}\charQ_{n,k}(m)t^k.\]
Moreover, we consider a partial sum 
\[\charQ_{n,k}^+=\sum_{T\in \sT_{n,k}^+}\stan_{\ch_{\symS_n}(U_T)}\] of \eqref{6}
where $\sT_{n,k}^+\subset \sT_{n,k}$ consists of weighted rooted trees having positive weights at the root vertex. The generating series for $\charQ_{n,k}^+$ is defined as
\[\charQ^+=mq+\sum_{n\geq 2}\charQ_n^+q^n ~\in \Q[m,t]\llbracket  q\rrbracket, \quad \text{ where }~\charQ_n^+=\sum_{k=0}^{n-2}\charQ_{n,k}^+t^k.\]
Then $\charP$ can be computed from $\charQ$, $\charQ$ can be computed from $\charQ^+$ and $\charQ^+$ can be computed by its own recursion as follows (cf.~\S\ref{s:M0n}).
\begin{theorem}[Theorem~\ref{thm:recursion}]
$\charP$ is determined by the following.
\begin{enumerate}
	\item $\charP$ and $\charQ$ are related by the formula
	\[
	(1+t)\charP=(1+t+mqt)\charQ- \frac{t}{2}(\charQ^2-\charQ^{[2]}), 
	\]
    where $\charQ^{[2]}(m,q,t)=\charQ(m,q^2,t^2)$.
	\item $\charQ$ is the plethystic exponential of $\charQ^+$: $\charQ=\Exp(\charQ^+)$.
	\item $\charQ^+$ satisfies the following two equivalent formulas.
	\[\begin{split}
		\mathrm{(Recursive)} \quad &\charQ^+=mq +\sum_{r\geq 3}\left(\sum_{i=1}^{r-2}t^i\right)(\sfh_r\circ \charQ^+),\\
		\mathrm{(Exponential)} \quad &\Exp(t\charQ^+)=t^2\Exp(\charQ^+)+(1-t)(1+t+mqt ).
	\end{split}\]
\end{enumerate}
\end{theorem}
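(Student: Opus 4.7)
The plan is to lift each of the three identities to the completion $\widehat\Lambda\llbracket t\rrbracket$ of the symmetric function ring, establish them there using the combinatorial descriptions of \cite{CKL, CKL2}, and descend via the characteristic polynomial homomorphism $\widehat\stan$ (Proposition~\ref{p:plethysm}). Set
\begin{align*}
\sfP &=1+\sfp_1+\sfh_2+\sum_{n\ge 3,\,k}\ch_{\symS_n}H^{2k}(\Mbar_{0,n})\,t^k,\\
\sfQ &=1+\sfp_1+\sum_{n\ge 2,\,k}\ch_{\symS_n}H^{2k}(\Mbar_{0,n+1})\,t^k,\\
\sfQ^+ &=\sfp_1+\sum_{n\ge 2,\,k}\sum_{T\in\sT^+_{n,k}}\ch_{\symS_n}(U_T)\,t^k,
\end{align*}
so that the extension of $\widehat\stan$ by $t\mapsto t$ sends $\sfP,\sfQ,\sfQ^+$ to $\charP,\charQ,\charQ^+$ (since $\widehat\stan$ takes each degree-$n$ summand to a polynomial in $m$ times $q^n$). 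The three polynomial identities of the theorem are the $\widehat\stan$-images of the symmetric-function identities
\[
(1+t)\sfP=(1+t+\sfp_1 t)\sfQ-t\,(\sfe_2\circ \sfQ),\quad \sfQ=\Exp(\sfQ^+),\quad \sfQ^+=\sfp_1+\sum_{r\ge 3}\Bigl(\sum_{i=1}^{r-2}t^i\Bigr)(\sfh_r\circ \sfQ^+).
\]

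The identity $\sfQ=\Exp(\sfQ^+)$ is the exponential-formula incarnation of the tree decomposition \cite[Proposition~5.12]{CKL}: each weighted rooted tree $T\in\sT_{n,k}$ factors uniquely as an unordered multiset of rooted sub-trees hanging from its root, with the ``positive root weight'' subclass $\sT^+_{n,k}$ playing the role of connected building blocks, so the standard species-level exponential formula yields the identity. The recursion for $\sfQ^+$ is the same decomposition applied once, stripping off the root: the root weight $i$ contributes $t^i$ (with $1\le i\le r-2$ for a root of out-degree $r$), and the $r$ unordered children contribute $\sfh_r\circ \sfQ^+$. The $\sfP$--$\sfQ$ relation compares $H^*(\Mbar_{0,n})$ (full $\symS_n$-action) with $H^*(\Mbar_{0,n+1})$ (partial $\symS_n$-action, with the last marking fixed): I would derive it by combining the $\ch_{\symS_{n+1}}$-level formulas of \cite{CKL,CKL2} with the identity $\ch_{\symS_n}(\Res V)=\partial_{\sfp_1}\ch_{\symS_{n+1}}(V)$, together with the forgetful/pushforward comparison $\Mbar_{0,n+1}\to \Mbar_{0,n}$; the $\sfe_2\circ \sfQ$ correction captures the $\symS_2$-overcount that arises when the distinguished last marking collides with another on a boundary stratum.

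To descend: $\widehat\stan$ is a $\Q$-algebra homomorphism sending $\sfp_n$-plethysm to the substitution $q\mapsto q^n, t\mapsto t^n$ (with $m$ fixed). Hence $\widehat\stan(\sfe_2\circ \sfQ)=\tfrac12(\charQ^2-\charQ^{[2]})$, $\widehat\stan(\Exp(\sfQ^+))=\Exp(\charQ^+)$ (with $\Exp$ the induced plethystic exponential on $\Q[m,t]\llbracket q\rrbracket$), $\widehat\stan(\sfh_r\circ \sfQ^+)=\sfh_r\circ \charQ^+$, and $\widehat\stan(\sfp_1)=mq$. Applying $\widehat\stan$ to the three lifted identities yields (1)--(3). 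The equivalence of the recursive and exponential forms in (3) is a short algebraic manipulation: expanding $\Exp(t\charQ^+)=\sum_{r\ge 0}t^r(\sfh_r\circ \charQ^+)$ and $\Exp(\charQ^+)=\charQ$, the difference $\Exp(t\charQ^+)-t^2\Exp(\charQ^+)=\sum_{r\ge 0}(t^r-t^2)\sfh_r(\charQ^+)$ collapses under the recursion to $(1-t)(1+t+mqt)$, and conversely.

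\textbf{Main obstacle.} The technically most delicate step is establishing the symmetric-function identity $(1+t)\sfP=(1+t+\sfp_1 t)\sfQ-t(\sfe_2\circ \sfQ)$: matching the $\symS_n$- and $\symS_{n+1}$-representation structures on $H^*(\Mbar_{0,n+1})$, comparing with $H^*(\Mbar_{0,n})$ via the forgetful map, and isolating $\sfe_2\circ \sfQ$ as the exact signature of the symmetrization overcount, requires explicit bookkeeping beyond the clean tree-combinatorics of \cite[Proposition~5.12]{CKL} powering the other two lifts. Once this lift is in place, the descent via $\widehat\stan$ and the internal equivalence in (3) are both formal.
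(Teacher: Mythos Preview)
Your approach is correct and essentially the same as the paper's: lift each identity to the symmetric-function level, where it is already established in \cite{CKL,CKL2} (recorded here as Theorems~\ref{thm:CKL.quasimap} and~\ref{thm:CKL.recursive}), and then apply $\widehat\stan$. The paper's proof is accordingly almost a one-liner: the lifted identities are quoted as prior results, and the only thing to check is that $\widehat\stan(\sfs_{(1,1)}\circ Q)=\tfrac12(\charQ^2-\charQ^{[2]})$, which follows from $\sfs_{(1,1)}=\tfrac12(\sfp_1^2-\sfp_2)$.

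Where your write-up diverges is in trying to re-derive the lifted identities rather than simply citing them. Your sketches of $\sfQ=\Exp(\sfQ^+)$ and of the $\sfQ^+$-recursion via root-stripping are accurate in spirit (this is indeed the content of \cite[Theorem~4.7]{CKL2}). But your proposed route to the $\sfP$--$\sfQ$ relation, via $\ch_{\symS_n}(\Res V)=\partial_{\sfp_1}\ch_{\symS_{n+1}}(V)$ and a direct comparison along the forgetful map, is not how \cite{CKL} actually proves it: there the identity comes from an $\symS_n$-equivariant factorization of $\Mbar_{0,n+1}\to\Mbar_{0,n}$ into explicit blowups, obtained from the wall-crossing of $\delta$-stable quasimaps, together with the blowup formula and GIT. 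So the step you flag as the ``main obstacle'' is in fact a black-box citation in this paper, and your alternative outline for it would require substantial new work (and it is not clear the restriction/$\partial_{\sfp_1}$ approach alone isolates exactly the $\sfe_2\circ\sfQ$ correction).
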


An equivalent polynomial version of this theorem is the following.

\begin{corollary}[Corollary~\ref{cor:recursion}]\label{cor:intro.recursion}
	For $n\geq 3$, $\charP_n$ and $\charQ_n$ are related by
	\[(1+t)\charP_n=\charQ_n-\frac{1}{2}t\left(\sum_{h=2}^{n-2}\charQ_h\charQ_{n-h}-\charQ_{\frac{n}{2}}^{[2]}
    \right),\]
	where $\charQ_\frac{n}{2}=0$ for $n$ odd, and $\charQ_{\frac{n}{2}}^{[2]}(m,t)=\charQ_{\frac{n}{2}}(m,t^2)$.
	
	Set $\charQ_1^+=m$. For $n\geq2$, $\charQ_n^+$ and $\charQ_n$ satisfy 
	\[\begin{split}
	    \charQ_n^+&=\sum_{\lambda\vdash n}\left(\sum_{i=1}^{\ell(\lambda)-2}t^i\right)\prod_{j=1}^m \left(\sfh_{r_j}\circ\charQ_{n_j}^+\right) \and
		\charQ_n=\sum_{\lambda\vdash n}\prod_{j=1}^m \left(\sfh_{r_j}\circ\charQ_{n_j}^+\right)
	\end{split}\]
	where $n_j$ denote the parts of $\lambda$ with multiplicities $r_j$ so that $\lambda=(n_1^{r_1},\cdots, n_s^{r_s})$ with $n_1>\cdots >n_s>0$
 and $\ell(\lambda)=\sum_{j=1}^sr_j$.
\end{corollary}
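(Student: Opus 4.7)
My approach is pure coefficient extraction: I would derive each of the three formulas in Corollary~\ref{cor:intro.recursion} by reading off $[q^n]$ from the corresponding identity in Theorem~\ref{thm:recursion}, using the initial values $\charQ_0=1$, $\charQ_1=m$, $\charP_0=1$, $\charP_1=m$, $\charP_2=m(m+1)/2$, together with the standard plethystic multiplicativity $\sfh_r[A+B]=\sum_{i+j=r}\sfh_i[A]\sfh_j[B]$.

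First, I would extract $[q^n]$ from Theorem~\ref{thm:recursion}(1) for $n\ge 3$. The key observation is that the two $mt\,\charQ_{n-1}$ contributions---one from $(1+t+mqt)\charQ$ and the other from expanding
\[
-\tfrac{t}{2}[q^n]\charQ^2=-\tfrac{t}{2}\sum_{h=0}^{n}\charQ_h\charQ_{n-h}
\]
using $\charQ_0=1$ and $\charQ_1=m$---exactly cancel, after which $(1+t)\charQ_n-t\charQ_n$ collapses to $\charQ_n$. The contribution from $\tfrac{t}{2}\charQ^{[2]}$ gives $\tfrac{t}{2}\charQ_{n/2}^{[2]}$ (interpreted as $0$ when $n$ is odd), producing exactly the stated formula.

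Second, for the formulas for $\charQ_n$ and $\charQ_n^+$ I would iterate the multiplicativity identity above over the decomposition $\charQ^+=\sum_{n\ge 1}\charQ_n^+q^n$. The crucial auxiliary fact is $\sfh_r[\charQ_n^+q^n]=(\sfh_r\circ\charQ_n^+)\,q^{nr}$, valid because the bookkeeping variable $q$ carries no $\symS_\infty$-action and is treated as a scalar by the plethystic substitution. Iterating yields
\[
\sfh_r\circ\charQ^+=\sum_{(r_{n'}):\,\sum_{n'}r_{n'}=r}\ \prod_{n'\ge 1}(\sfh_{r_{n'}}\circ\charQ_{n'}^+)\,q^{\sum_{n'}n'r_{n'}},
\]
and tuples $(r_{n'})$ with $\sum_{n'}n'r_{n'}=n$ correspond bijectively to partitions $\lambda=(n_1^{r_1},\ldots,n_s^{r_s})\vdash n$ in multiplicity notation. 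Substituting into Theorem~\ref{thm:recursion}(2) and taking $[q^n]$ yields the $\charQ_n$ formula immediately. Applied to Theorem~\ref{thm:recursion}(3), the same extraction picks out partitions of length exactly $r$ with outer coefficient $\sum_{i=1}^{r-2}t^i$; since $\sum_{i=1}^{\ell(\lambda)-2}t^i$ is an empty sum when $\ell(\lambda)\le 2$, the restriction $r\ge 3$ can be dropped without changing the answer, and the base case $\charQ_1^+=m$ comes directly from the $mq$ term.

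The main obstacle is purely bookkeeping: ensuring that the plethystic manipulations interact correctly with the formal $q$-grading. Once the scalar-like behavior $\sfh_r[q^nF]=q^{nr}\sfh_r[F]$ is granted, everything reduces to matching power series coefficients.
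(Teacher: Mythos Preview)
Your proposal is correct and is essentially the same as the paper's (implicit) approach: the paper states Corollary~\ref{cor:recursion} without proof, since the explicit $\charP_n$, $\charQ_n$, $\charQ_n^+$ formulas are obtained either by applying $\widehat\stan$ to the ``more explicitly'' part of Theorem~\ref{thm:CKL.recursive} or, equivalently, by extracting the coefficient of $q^n$ from the generating-series identities in Theorem~\ref{thm:recursion}---exactly the computation you carry out. Your one imprecision is the phrase ``$q$ \ldots\ is treated as a scalar'': in the paper's convention $\sfp_r\circ f$ sends $q\mapsto q^r$, not $q\mapsto q$, but the identity $\sfh_r\circ(\charQ_n^+q^n)=(\sfh_r\circ\charQ_n^+)\,q^{nr}$ you actually use is correct under that convention, so the argument goes through unchanged.
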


By using the above formulas, we implemented a computer program to explicitly compute the characteristic polynomials, and carried out computations for $n\le 72$. The computational results allow us to investigate the log-concavity properties of the characteristic polynomials. Since $\charP_n$ and $\charQ_n$ are bivariate polynomials, various types of log-concavity can be considered (cf.~\S\ref{S2}), and our computations indicate that they exhibit several such properties. Based on this numerical evidence, we propose a conjecture concerning the log-concavity of the characteristic polynomials (Conjecture~\ref{conj}).

\subsection{Asymptotic formulas and log-concavity}

We study asymptotic formulas of $\charP_{n,k}(m)$, as $n$ grows, in two directions: 

\begin{enumerate}
	\item the values $\charP_{n,k}(m_0)$ at a given $m_0\geq 1$ for given $k\geq0$;
	\item the coefficients of $m^{n-j}$ in $\charP_{n,k}$, denoted by $\charP_{n,k}^{n-j}$, for given $j,k\geq0$.
\end{enumerate}
These formulas lead us to corresponding asymptotic log-concavity properties which are far reaching generalizations of the corresponding results in \cite{ACM, CKL2}.

For the asymptotics, we use the constants $$c_k=\frac{(k+1)^{k-1}}{k!} \and d_k=\frac{(k+1)^{k-2}}{k!}.$$ 
\begin{theorem}[Theorem~\ref{thm:asymp.value}]\label{thm:intro.value}
	Fix $m_0\geq 1$ and $k\geq0$. Then as $n$ grows,
	\[\begin{split}
		&\charQ_{n,k}(m_0)=\frac{c_k}{((k+1)m_0-1)!}n^{(k+1)m_0-1}+o(n^{(k+1)m_0-1}) ~\text{ and}\\
		&\charP_{n,k}(m_0)=\frac{d_k}{((k+1)m_0-1)!}n^{(k+1)m_0-1}+o(n^{(k+1)m_0-1}).
	\end{split} \]
\end{theorem}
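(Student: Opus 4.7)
The plan is to establish the asymptotic of $\widetilde Q(q, t) := \widehat{\stan}(\charQ)(m_0, q, t)$ first, and then deduce the asymptotic of $\widetilde P(q, t) := \widehat{\stan}(\charP)(m_0, q, t)$ from the relation in Theorem~\ref{thm:recursion}(1). For the former I would work inductively with $\widetilde Q^+ := \widehat{\stan}(\charQ^+)(m_0, q, t)$ using the Recursive form in Theorem~\ref{thm:recursion}(3).

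Applying $\widehat{\stan}$ at $m = m_0$ to the recursive form and using the observation that, at leading order in $n$ (the plethystic corrections from $\sfp_j$ with $j\ge 2$ only affect coefficients $[q^{n/j}]$ and are polynomially subdominant), the map $F \mapsto \widehat{\stan}(\sfh_r \circ F)$ becomes $\widetilde F \mapsto \widetilde F^{\,r}/r!$, the recursion reduces to
\begin{equation*}
\widetilde Q^+(q, t) \;\approx\; m_0\, q + \sum_{r \geq 3} \bigl(t + t^2 + \cdots + t^{r-2}\bigr) \frac{\widetilde Q^+(q, t)^r}{r!}.
\end{equation*}
Extracting $[t^k]$ and proceeding by induction on $k$ shows that $\widetilde Q^+_{n, k}(m_0) \sim \alpha_k\, n^{km_0-1}/(km_0-1)!$, where $A(y) := \sum_{k\ge 1} \alpha_k y^k$ satisfies the tree-function equation $A = y\, e^A$, yielding $\alpha_k = k^{k-1}/k!$ by Cayley's formula. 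Combining with the plethystic exponential $\widetilde Q(q, t) = \exp\bigl(\sum_{j \geq 1} \widetilde Q^+(q^j, t^j)/j\bigr)$, noting that $\widetilde Q^+(q, 0) = m_0 q$ so the full $[t^0]$-part of $\sum_j \widetilde Q^+(q^j,0)/j$ is $-m_0\log(1-q)$, and performing the beta-integral convolution against $(1-q)^{-m_0}$, one obtains
\begin{equation*}
\charQ_{n,k}(m_0) = [q^n t^k]\,\widetilde Q \;\sim\; \frac{[y^k]\, e^{A(y)}}{((k+1)m_0-1)!}\, n^{(k+1)m_0-1} \;=\; \frac{c_k}{((k+1)m_0 - 1)!}\, n^{(k+1)m_0 - 1},
\end{equation*}
where the last equality uses $e^{A(y)} = \phi(y) := \sum_{k\ge 0} c_k\, y^k$ (because $A = y\phi$ and $\phi = e^{y\phi}$), so $[y^k]e^A = c_k$.

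For $\charP_{n,k}(m_0)$, apply $\widehat{\stan}$ at $m = m_0$ to Theorem~\ref{thm:recursion}(1), yielding $(1+t)\widetilde P = (1+t+m_0 qt)\widetilde Q - (t/2)(\widetilde Q^{\,2} - \widetilde Q^{[2]})$. At $[q^n t^k]$, the contributions from the extra $t$, $m_0 qt$, $\widetilde P_{n,k-1}$, and $\widetilde Q^{[2]}$ are all polynomially subleading in $n$, so $\charP_{n,k}(m_0) \sim \charQ_{n,k}(m_0) - \tfrac12\,[q^n t^{k-1}]\,\widetilde Q^{\,2}$. A beta-integral convolution gives $[q^n t^{k-1}]\,\widetilde Q^{\,2} \sim n^{(k+1)m_0-1}/((k+1)m_0-1)! \cdot \sum_{i=0}^{k-1} c_i c_{k-1-i}$, so
\begin{equation*}
\charP_{n,k}(m_0) \;\sim\; \Bigl(c_k - \tfrac12 \sum_{i=0}^{k-1} c_i c_{k-1-i}\Bigr)\,\frac{n^{(k+1)m_0-1}}{((k+1)m_0-1)!} \;=\; \frac{d_k}{((k+1)m_0-1)!}\, n^{(k+1)m_0-1},
\end{equation*}
where the identity $d_k = c_k - \tfrac12 \sum_{i=0}^{k-1} c_i c_{k-1-i}$ is precisely the dissymmetry theorem for labeled trees: $T(y) - T(y)^2/2 = \sum_{k\ge 0} d_k\, y^{k+1}$ is the EGF of unrooted labeled trees on $k+1$ vertices, giving $d_k = (k+1)^{k-2}/k!$ by Cayley.

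The principal obstacle is rigorously carrying out the induction in the first step, which requires tracking the leading contribution from each partition of $n$ through the Recursive formula, showing that subdominant partitions contribute $o(n^{km_0-1})$, and correctly identifying the Cayley constants $\alpha_k = k^{k-1}/k!$ through the implicit tree-function equation $A = y e^A$. A secondary technical point is verifying that the higher-order plethystic corrections (from $\sfp_j$ with $j \ge 2$ in the plethystic exponential) can indeed be absorbed into the $o$-term of the final asymptotic, which follows because their contribution to $[q^n t^k]$ is controlled by coefficients at $[q^{n/j}]$.
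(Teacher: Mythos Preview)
Your outline for deducing $\charP_{n,k}(m_0)$ from $\charQ_{n,k}(m_0)$ via Theorem~\ref{thm:recursion}(1) is essentially the paper's own argument (the paper makes the induction on $k$ explicit, which you should too: you cannot assert that $\charP_{n,k-1}(m_0)$ is subleading before you have proved the asymptotic for $\charP_{n,k-1}$). Likewise, your passage from $\charQ^+$ to $\charQ$ via the plethystic exponential, separating out $(1-q)^{-m_0}$ and doing a beta convolution, is correct and yields $[y^k]e^{A}=c_k$ as you claim.

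The genuine gap is in your derivation of the asymptotic for $\charQ^+_{n,k}(m_0)$. Your key simplification---replacing $\sfh_r\circ\charQ^+$ by $(\charQ^+)^r/r!$ on the grounds that the terms $\sfp_j$ with $j\ge 2$ ``only affect coefficients $[q^{n/j}]$ and are polynomially subdominant''---is \emph{false} here. The reason is that $[t^0]\charQ^+ = m_0 q$ is supported at a single $q$-degree, so it does not have polynomially growing coefficients, and the ``subdominance'' heuristic collapses. Concretely, take $k=1$: the exact recursion gives
\[
\charQ^+_{n,1}(m_0)=[q^n t^0]\bigl(\sfh_n\circ\charQ^+\bigr)=\sum_{\lambda\vdash n}z_\lambda^{-1}m_0^{\ell(\lambda)}=\multiset{m_0}{n}\sim\frac{n^{m_0-1}}{(m_0-1)!},
\]
whereas your approximate recursion gives $[q^n t^1]\widetilde Q^+\approx m_0^n/n!$, which decays super-polynomially. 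In fact, with your approximation the right-hand side of the recursion at $[q^nt^k]$ is only $O(n^{(k-1)m_0-1})$, one full power of $n^{m_0}$ short of the claimed $\alpha_k n^{km_0-1}/(km_0-1)!$, so the induction cannot close and the equation $A=ye^A$ is never actually produced.

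Interestingly, you already know the fix: it is exactly the move you make one step later when passing from $\charQ^+$ to $\charQ$. Write $\charQ^+=m_0 q+G$ with $G\in t\cdot\QQ[m]\llbracket q,t\rrbracket$, use $\sfh_r\circ(m_0 q+G)=\sum_{a+b=r}\multiset{m_0}{a}q^a\,(\sfh_b\circ G)$, and only then approximate $\sfh_b\circ G\approx G^b/b!$ (now legitimate, since $G$ has polynomially growing $[q^n]$-coefficients). Summing over $r$ and $b$ and reading off $[q^n t^k]$ then produces, after the same beta-integral bookkeeping, a genuine recursion for the constants $\alpha_k$ from which $A=ye^A$ and $\alpha_k=k^{k-1}/k!$ follow. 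The paper avoids all of this by arguing directly with weighted rooted trees: it groups the summands of $\charQ_{n,k}$ by the underlying tree $\bar T\in\bar\sT_k$, observes that the dominant contribution comes from $\bar T$ with the maximal number $k+1$ of vertices, and reads off $c_k$ from Cayley's formula via $\sum_{|V(\bar T)|=k+1}1/|\Aut(\bar T)|=(k+1)^{k-1}/k!$. That route sidesteps the generating-function subtlety entirely.
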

From this, the following asymptotic log-concavity follows.
\begin{corollary}[Corollary~\ref{cor:asymp.lc.value}]\label{cor:intro.value}
	Fix $m_0,k\geq 1$. Then, 
	\[\begin{split}
		&\charP_{n,k}(m_0)^2\geq \charP_{n,k-1}(m_0)\charP_{n,k+1}(m_0) ~\text{ and}\\
		&\charQ_{n,k}(m_0)^2\geq \charQ_{n,k-1}(m_0)\charQ_{n,k+1}(m_0)
	\end{split}
	\]
	for sufficiently large $n$. Moreover, when $m_0>1$, the inequalities are strict.
\end{corollary}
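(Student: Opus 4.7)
The plan is to deduce the asymptotic log-concavity directly from Theorem~\ref{thm:intro.value} by a leading-coefficient comparison. Setting $e_k := (k+1)m_0-1$, the identity $e_{k-1}+e_{k+1} = 2e_k$ ensures that both sides of each inequality are asymptotic to a constant times $n^{2e_k}$. It therefore suffices to show that the ratio
\[ R^P(k,m_0) \;:=\; \frac{d_k^2}{d_{k-1}d_{k+1}}\cdot \frac{(km_0-1)!\,((k+2)m_0-1)!}{((k+1)m_0-1)!^2} \]
satisfies $R^P \geq 1$, with strict inequality when $m_0 > 1$, and similarly for the analogous ratio $R^Q$ with $c_k$ in place of $d_k$.

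Substituting $d_k = (k+1)^{k-2}/k!$ and $c_k = (k+1)^{k-1}/k!$ simplifies the constant factors to
\[ \frac{d_k^2}{d_{k-1}d_{k+1}} = \frac{(k+1)^{2k-3}}{k^{k-2}(k+2)^{k-1}}, \qquad \frac{c_k^2}{c_{k-1}c_{k+1}} = \frac{(k+1)^{2k-1}}{k^{k-1}(k+2)^k}, \]
while the factorial part admits the telescoping form $\prod_{j=1}^{m_0}\frac{(k+1)m_0-1+j}{km_0-1+j}$, a product of $m_0$ terms each strictly greater than $1$. Specializing to $m_0 = 1$ (where this product equals $(k+1)/k$) yields
\[ R^P(k,1) = \biggl(\frac{(k+1)^2}{k(k+2)}\biggr)^{\!k-1}, \qquad R^Q(k,1) = \biggl(\frac{(k+1)^2}{k(k+2)}\biggr)^{\!k}. \]
Since $(k+1)^2 > k(k+2)$, we have $R^Q(k,1) > 1$ for every $k \geq 1$ and $R^P(k,1) > 1$ for $k \geq 2$. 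For $m_0 \geq 2$ a Stirling-type estimate shows the factorial product grows at least exponentially in $m_0$, dominating the constant factor, so $R^P, R^Q > 1$ throughout this range. This proves the strict asymptotic log-concavity whenever $m_0 > 1$.

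The main obstacle is the borderline case $m_0 = 1, k = 1$ for $\charP$, where $R^P(1,1) = 1$ and the leading coefficients of $\charP_{n,1}(1)^2$ and $\charP_{n,0}(1)\charP_{n,2}(1) = \charP_{n,2}(1)$ coincide (both asymptotic to $n^2/4$), so leading-order asymptotics alone cannot furnish even the non-strict inequality. To resolve this I would extract the next-order term in the asymptotic expansion, either by sharpening the proof of Theorem~\ref{thm:intro.value} to retain the sub-leading correction, or by using the tree expansion \eqref{6} together with Corollary~\ref{cor:intro.recursion} to compute $\charP_{n,1}(1)$ and $\charP_{n,2}(1)$ in closed combinatorial form, and then verify non-negativity of $\charP_{n,1}(1)^2 - \charP_{n,2}(1)$ for large $n$ directly.
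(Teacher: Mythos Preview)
Your reduction to comparing the leading-coefficient ratios $R^P,R^Q$ and your computation of the constant and factorial factors match the paper's proof. Two differences are worth noting. First, for $m_0=1$ the paper does not argue directly at all: it simply cites \cite{CKL2}, where the asymptotic log-concavity of the Betti numbers of $\Mbar_{0,n}/\symS_n$ and $\Mbar_{0,n+1}/\symS_n$ was already established. This sidesteps the borderline case $R^P(1,1)=1$ you correctly flagged, so no next-order analysis is needed. Second, for $m_0>1$ the paper avoids any Stirling-type estimate. In your telescoping product $\prod_{j=1}^{m_0}\frac{(k+1)m_0-1+j}{km_0-1+j}$, the $j=1$ factor equals $\frac{k+1}{k}$ and every other factor is strictly greater than $1$, so the product strictly exceeds $\frac{k+1}{k}$ whenever $m_0\ge 2$. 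Multiplying by $c_k^2/(c_{k-1}c_{k+1})$ and $d_k^2/(d_{k-1}d_{k+1})$ then gives the uniform bounds
\[
R^Q>\Bigl(1+\tfrac{1}{k^2+2k}\Bigr)^{k}>1,\qquad R^P>\Bigl(1+\tfrac{1}{k^2+2k}\Bigr)^{k-1}\ge 1,
\]
valid for every $m_0>1$ and $k\ge 1$. Your ``grows exponentially in $m_0$'' argument, as written, only secures $R>1$ for $m_0$ large and would still require a check at small $m_0$; the elementary bound above closes that gap cleanly.
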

Theorem~\ref{thm:intro.value} and Corollary~\ref{cor:intro.value}  
generalize the asymptotic formulas and the asymptotic log-concavity for the Betti numbers of $\Mbar_{0,n}/\symS_n$ in \cite[\S6]{CKL2} which correspond to the case of $m_0=1$ (cf. Remark \ref{re:19}). 

\smallskip

Next, we investigate the asymptotic behaviors of the coefficients in $\charP_{n,k}$ and $\charQ_{n,k}$. Recall that the \emph{signless Stirling number of the first kind} $c(n,j)$ is the number of $\sigma\in \symS_n$ with $j$ disjoint cycles. 

Write 
$\charP_{n,k}(m)=\sum_j \charP^j_{n,k}m^j$ and $\charQ_{n,k}(m)=\sum_j\charQ^j_{n,k}m^j$.

\begin{theorem}[Theorem~\ref{thm:mu.coeff}]\label{thm:intro.coeff}
	Let $j,k\in \Z_{\geq 0}$. Then as $n$ grows,  
	\[\begin{split}
		&
        \charQ_{n,k}^{n-j}
        =c_k\cdot\frac{(k+1)^{n-j}c(n,n-j)}{n!}+o\left(\frac{(k+1)^nn^{2j}}{n!}\right) ~\text{ and}\\
		&
        \charP_{n,k}^{n-j}=d_k\cdot\frac{(k+1)^{n-j}c(n,n-j)}{n!}+o\left(\frac{(k+1)^nn^{2j}}{n!}\right).
	\end{split}
	\]
\end{theorem}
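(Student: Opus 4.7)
The claim for $\charQ_{n,k}^{n-j}$ is equivalent to the \emph{shape statement}
\[
\charQ_{n,k}(m) = c_k\binom{(k+1)m + n - 1}{n} + R_{n,k}(m),
\]
with each coefficient $[m^{n-j}]R_{n,k} = o((k+1)^n n^{2j}/n!)$: indeed, the expansion $\binom{(k+1)m + n - 1}{n} = \tfrac{1}{n!}\sum_j c(n,n-j)(k+1)^{n-j}m^{n-j}$ immediately extracts the desired main term after taking $[m^{n-j}]$. Moreover, this shape is consistent with Theorem~\ref{thm:intro.value}, since at $m = m_0$ one has $\binom{(k+1)m_0 + n - 1}{n} \sim n^{(k+1)m_0 - 1}/((k+1)m_0 - 1)!$. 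So the task is a coefficient-wise sharpening of Theorem~\ref{thm:intro.value}.

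The strategy is to use the tree decomposition \eqref{6}, $\charQ_{n,k}(m) = \sum_{T\in\sT_{n,k}}\stan_{\ch(U_T)}(m)$. Because each $U_T$ is a permutation representation of orbit type $\lambda(T)\vdash n$, we have $\stan_{\ch(U_T)}(m) = \prod_i\binom{m + \lambda(T)_i - 1}{\lambda(T)_i}$ and
\[
[m^{n-j}]\stan_{\ch(U_T)}(m) = \sum_{\mathbf{i}:\,\sum_s i_s = j}\prod_s\frac{c(\lambda(T)_s,\,\lambda(T)_s - i_s)}{\lambda(T)_s!}.
\]
The classical Stirling asymptotic $c(a,a-i)\sim a^{2i}/(2^i\,i!)$ for fixed $i$ implies that the above is asymptotically largest when $\lambda(T)$ is concentrated in a single large part $\lambda(T)_1 \sim n$ with the remaining parts bounded.

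Next, I would identify these \emph{dominant trees} in $\sT_{n,k}$ using the combinatorial description from \cite[\S 5]{CKL}. Their count, analyzed via Cayley's formula --- which counts $(k + 1)^{k - 1}$ rooted labeled trees on $k+1$ vertices --- quotiented by the $k!$ symmetry over the non-root vertices, should produce the constant $c_k = (k+1)^{k-1}/k!$. The factor $(k+1)^{n-j}$ then arises by a fixed-point count: each of the $n - j$ cycles of a permutation $\sigma$ with $c(\sigma) = n - j$ can be attached independently to any of the $k + 1$ vertices of a dominant tree, so summing over such $\sigma$ reproduces $c(n, n - j)(k + 1)^{n - j}$. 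For the non-dominant trees --- those $\lambda(T)$ with two or more parts of order $n$, or a more fragmented shape --- the same Stirling asymptotic combined with a uniform count of such trees shows they contribute only $o((k+1)^n n^{2j}/n!)$.

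For $\charP_{n,k}^{n-j}$, I would apply the relation $(1 + t)\charP_n = \charQ_n - \tfrac{t}{2}\bigl(\sum_{h=2}^{n-2}\charQ_h\charQ_{n-h} - \charQ_{n/2}^{[2]}\bigr)$ of Corollary~\ref{cor:intro.recursion}. Substituting the shape approximation for $\charQ$ on both sides and tracking $[t^k][m^{n-j}]$ --- where the convolutions $\charQ_h\charQ_{n-h}$ interact with the $(1 + t)^{-1}$ factor in a manner computable from the shape formula --- should convert the leading constant from $c_k$ to $d_k = c_k/(k + 1) = (k + 1)^{k - 2}/k!$. The main obstacles are twofold: enumerating dominant trees with the correct constant $c_k$, which is a matter of pinning down the combinatorial structure of $\sT_{n,k}$, and bounding both the non-dominant trees and the $\charP$-$\charQ$ cross-terms with enough precision to recover the coefficient $d_k$ exactly.
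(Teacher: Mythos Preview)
Your formula $\stan_{\ch(U_T)}(m)=\prod_i\binom{m+\lambda(T)_i-1}{\lambda(T)_i}$ is incorrect: it holds only when $\Stab(T)$ equals the Young subgroup $\symS_{\lambda(T)}$, but in general $\Stab(T)$ is strictly larger. Whenever $T$ has two isomorphic rooted subtrees hanging from the same vertex, the permutation swapping them lies in $\Stab(T)\setminus\symS_{\lambda(T)}$ (cf.~Example~\ref{ex:stab3}, where $\stan_{U_T}(m)=\multisetbody{m}{n-2a}\multisetbody{\multisetbody{m}{a}}{2}$, not $\multisetbody{m}{n-2a}\multisetbody{m}{a}^2$). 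The paper handles this by stratifying $\Stab(T)$ over its image in $\Aut(\bar T)$, where $\bar T$ is $T$ with inputs forgotten: the fiber over $\tau=e$ is exactly $\symS_{\lambda(T)}$ and gives your product formula, while each fiber over $\tau\neq e$ is bounded by comparison with a \emph{quotient tree} $T/\tau$ having strictly fewer vertices (Lemma~\ref{l:quotient.tree}). Establishing that the $\tau\neq e$ strata are negligible is the main technical content of \S\ref{S7.3} and is missing from your outline.

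Your ``dominant trees'' heuristic is also backwards. The quantity $\sum_{\mathbf i}\prod_s c(\lambda_s,\lambda_s-i_s)/\lambda_s!$ is \emph{not} maximized when $\lambda(T)$ has one large part: the multinomial factor $1/\prod_s\lambda_s!$ heavily favors spread-out partitions (for example $\lambda=(n/2,n/2)$ beats $\lambda=(n)$ by a factor of order $2^n/\sqrt n$). The paper does not isolate individual dominant trees at all. Instead it groups all $T$ by the input-free skeleton $\bar T\in\overline\sT_k$ and sums the $\tau=e$ contribution over \emph{all} distributions of $n$ inputs among the $N=|V(\bar T)|$ vertices; that sum is the coefficient of $u^nv^{n-j}$ in $(1-u)^{-Nv}$, namely $N^{n-j}c(n,n-j)/n!$, and this generating-function identity is where the factor $(k+1)^{n-j}$ actually originates. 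The constant $c_k$ then comes from $\sum_{|V(\bar T)|=k+1}1/|\Aut(\bar T)|$ via Cayley's formula (Proposition~\ref{p:summand.identity}), while skeletons with $N\le k$ contribute only $O(k^nn^{2j}/n!)$. Your plan for deducing $\charP_{n,k}^{n-j}$ from $\charQ_{n,k}^{n-j}$ via \eqref{eq:walcrossing.charpoly} matches the paper's and is correct once the $\charQ$ asymptotic is in hand.
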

Based on this, we obtain the following two asymptotic log-concavity results about the coefficients when we vary the powers of $t$ and $m$, respectively. In fact, in the first case, they are asymptotically \emph{ultra} log-concave.
\begin{corollary}[Corollary~\ref{cor:lc.coeff.t}]\label{cor:intro.coeff.t}
	Fix $j\geq 0$ and $k\geq 1$. Then,
	\[\begin{split}
		&\left(\frac{\charP_{n,k}^{n-j}}{\binom{n-3}{k}}\right)^2\geq \left(\frac{\charP_{n,k-1}^{n-j}}{\binom{n-3}{k-1}}\right)\left(\frac{\charP_{n,k+1}^{n-j}}{\binom{n-3}{k+1}}\right) ~\text{ and}\\
		&
        \left(\frac{\charQ_{n,k}^{n-j}}{\binom{n-2}{k}}\right)^2\geq \left(\frac{\charQ_{n,k-1}^{n-j}}{\binom{n-2}{k-1}}\right)\left(\frac{\charQ_{n,k+1}^{n-j}}{\binom{n-2}{k+1}}\right)
	\end{split}
	\]
	for any sufficiently large $n$. 
\end{corollary}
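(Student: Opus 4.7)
The plan is to combine the leading-order asymptotic from Theorem~\ref{thm:mu.coeff} with the strict AM--GM type inequality $(k+1)^2 > k(k+2)$, valid for every integer $k \ge 1$. The former provides an exact leading term for each of the six quantities involved, and the latter supplies an exponentially growing factor in $n$ that swamps every other contribution to the ratio we must bound below.

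First I would verify that the main term in Theorem~\ref{thm:mu.coeff} genuinely dominates the stated error. Since $c(n,n-j)$ is a polynomial in $n$ of degree $2j$ with leading coefficient $(2^j j!)^{-1}$, the main term $d_{k'}(k'+1)^{n-j}c(n,n-j)/n!$ is of order $(k'+1)^{n-j} n^{2j}/n!$, whereas the error is $o((k'+1)^n n^{2j}/n!)$. These differ by the constant multiplicative factor $(k'+1)^{j}$ in $n$, so the error is asymptotically negligible relative to the main term, and hence
\[
\charP_{n,k'}^{n-j}=\frac{d_{k'}(k'+1)^{n-j}c(n,n-j)}{n!}\bigl(1+o(1)\bigr)
\]
for each $k' \in \{k-1,k,k+1\}$, and analogously for $\charQ_{n,k'}^{n-j}$ with $d_{k'}$ replaced by $c_{k'}$.

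Setting $A_{n,k} := \charP_{n,k}^{n-j}/\binom{n-3}{k}$, the above gives
\[
\frac{A_{n,k}^{\,2}}{A_{n,k-1}\,A_{n,k+1}} \;\sim\; \frac{d_k^{\,2}}{d_{k-1}d_{k+1}}\cdot\left(\frac{(k+1)^2}{k(k+2)}\right)^{n-j}\cdot\frac{\binom{n-3}{k-1}\binom{n-3}{k+1}}{\binom{n-3}{k}^2}.
\]
A direct computation simplifies the binomial quotient to $k(n-3-k)/\bigl((k+1)(n-2-k)\bigr)$, which tends to $k/(k+1) > 0$ as $n\to\infty$. The constant $d_k^{\,2}/(d_{k-1}d_{k+1})$ is strictly positive. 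Since $(k+1)^2 > k(k+2)$ for every $k\ge 1$, the exponential factor tends to $+\infty$, forcing the whole expression to exceed $1$ for all sufficiently large $n$. This proves the inequality for $\charP$; the argument for $\charQ$ is identical after replacing $d_\bullet$ by $c_\bullet$ and $\binom{n-3}{\bullet}$ by $\binom{n-2}{\bullet}$ throughout.

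The main technical point requiring care is the simultaneous control of the three separate error terms; however, because each contributes only a $1+o(1)$ multiplicative correction to its own main term, their combined effect on the ratio is a single overall $1+o(1)$ factor. Since the leading-order ratio already diverges, this correction is harmless, and so the asymptotic ultra log-concavity inequalities hold for all sufficiently large $n$.
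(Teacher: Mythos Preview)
Your proposal is correct and follows essentially the same approach as the paper: invoke Theorem~\ref{thm:mu.coeff} to replace each $\charP_{n,k'}^{n-j}$ (resp.~$\charQ_{n,k'}^{n-j}$) by its leading term, observe that the ratio of interest is then governed by $\bigl((k+1)^2/k(k+2)\bigr)^{n}$ times bounded factors, and conclude from $(k+1)^2>k(k+2)$. The paper's proof is terser, saying only that the question reduces to log-concavity of $\{d_k(k+1)^n/\binom{n-3}{k}\}_k$ and $\{c_k(k+1)^n/\binom{n-2}{k}\}_k$, which ``one can easily check''; your write-up makes explicit exactly the verification the paper omits.
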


\begin{corollary}[Corollary~\ref{cor:lc.coeff.m}]\label{cor:intro.coeff.m}
	Fix $j\geq 1$ and $k\geq 0$. Then,
	\[\begin{split}
		&\left(\charP_{n,k}^{n-j}\right)^2\geq \left(\charP_{n,k}^{n-j-1}\right)\cdot\left(\charP_{n,k}^{n-j+1}\right) ~\text{ and}\\
		&\left(\charQ_{n,k}^{n-j}\right)^2\geq \left(\charQ_{n,k}^{n-j-1}\right)\cdot\left(\charQ_{n,k}^{n-j+1}\right)
	\end{split}
	\]
	for any sufficiently large $n$.
\end{corollary}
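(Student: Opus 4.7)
The plan is to deduce Corollary~\ref{cor:intro.coeff.m} directly from the asymptotic formulas of Theorem~\ref{thm:intro.coeff}, using as combinatorial input the log-concavity of the signless Stirling numbers of the first kind. For fixed $j,k\geq 0$, I would rewrite the theorem as
\[
\charP_{n,k}^{n-j}=M_{n,j}+E_{n,j},\qquad M_{n,j}:=d_k\,\frac{(k+1)^{n-j}c(n,n-j)}{n!},\qquad E_{n,j}=o\!\left(\frac{(k+1)^n n^{2j}}{n!}\right).
\]
Since $k$ and $j$ are fixed, the factor $(k+1)^j$ is a constant, and the error can be absorbed as $E_{n,j}=o\bigl((k+1)^{n-j}n^{2j}/n!\bigr)$. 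Combined with the classical asymptotic $c(n,n-j)\sim n^{2j}/(2^j\,j!)$ as $n\to\infty$ (the dominant contribution coming from permutations consisting of exactly $j$ disjoint transpositions and $n-2j$ fixed points), one obtains $E_{n,j}=o(M_{n,j})$.

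Next I would establish a quantitative log-concavity of $c(n,n-\cdot)$. Since $\sum_i c(n,i)x^i=x(x+1)\cdots(x+n-1)$ has only real roots, Newton's inequalities imply that the sequence $c(n,\cdot)$ is (ultra) log-concave. Combined with the asymptotic above, a direct computation gives
\[
c(n,n-j)^2-c(n,n-j-1)\,c(n,n-j+1)\;\sim\;\frac{n^{4j}}{4^j(j!)^2(j+1)},
\]
which is strictly positive of exact order $n^{4j}$.

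To conclude, expand
\[
\bigl(\charP_{n,k}^{n-j}\bigr)^2-\charP_{n,k}^{n-j-1}\charP_{n,k}^{n-j+1}=\bigl(M_{n,j}^2-M_{n,j-1}M_{n,j+1}\bigr)+R_n,
\]
where $R_n$ collects all terms involving at least one factor among $E_{n,j-1},E_{n,j},E_{n,j+1}$. By the previous step,
\[
M_{n,j}^2-M_{n,j-1}M_{n,j+1}=\frac{d_k^2(k+1)^{2n-2j}}{n!^2}\bigl[c(n,n-j)^2-c(n,n-j-1)c(n,n-j+1)\bigr]
\]
is of exact order $(k+1)^{2n-2j}n^{4j}/n!^2$ with positive leading coefficient, while each summand of $R_n$ is a product of some $M_{n,j'}$ of order $(k+1)^{n-j'}n^{2j'}/n!$ with some $E_{n,j''}=o\bigl((k+1)^{n-j''}n^{2j''}/n!\bigr)$ (or two factors of the latter form), hence $o\bigl((k+1)^{2n-2j}n^{4j}/n!^2\bigr)$. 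Therefore the difference is strictly positive for $n$ sufficiently large; the statement for $\charQ_{n,k}^{n-j}$ follows identically upon replacing $d_k$ by $c_k$.

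The main obstacle is essentially careful bookkeeping of the error: the $o(\cdot)$ in Theorem~\ref{thm:intro.coeff} is stated against the pessimistic benchmark $(k+1)^nn^{2j}/n!$, which is $(k+1)^j$ times the leading-order main term, but this factor is independent of $n$ and so gets absorbed. What is truly needed is the \emph{quantitative} gap in the Stirling log-concavity (the explicit leading coefficient $1/(4^j(j!)^2(j+1))$ in the asymptotic) rather than just its qualitative positivity, because the main-term discrepancy must dominate every mixed term in $R_n$, which is only $o$ of the maximal plausible size. No new structural ideas beyond Theorem~\ref{thm:intro.coeff} and Newton's inequalities are required.
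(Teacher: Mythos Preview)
Your proof is correct and follows the same approach as the paper, which also reduces to the log-concavity of the signless Stirling numbers $c(n,n-j)$ via real-rootedness of $m(m+1)\cdots(m+n-1)$. You are in fact more careful than the paper's one-line argument: you correctly observe that the \emph{quantitative} gap from Newton's inequalities (yielding $c(n,n-j)^2-c(n,n-j-1)c(n,n-j+1)$ of exact order $n^{4j}$) is needed to dominate the mixed error terms $R_n$, a point the paper glosses over by invoking only qualitative log-concavity.
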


Theorem~\ref{thm:intro.coeff} and Corollary~\ref{cor:intro.coeff.t} generalize the results by Aluffi-Chen-Marcolli in \cite{ACM} on 
the Betti numbers of $\Mbar_{0,n+1}$, which correspond to the case where $j=0$ (Remark~\ref{rem:leading.coeff}).
It remains open whether the ultra log-concavity for~$j=0$ in Corollary~\ref{cor:intro.coeff.t} holds for all~$n$, whereas explicit formulas for the Betti numbers have recently been obtained in~\cite{AMN,EFMPV}.

\medskip

All cohomology groups in this paper are singular cohomology with rational coefficients and all $\symS_n$-representations are defined over $\QQ$.

\medskip

\noindent\textbf{Acknowledgement.} 
Part of this paper was written during the third author's visit to IAS in the fall of 2024. He is grateful to June Huh for the invitation and to IAS for its hospitality.

\bigskip

\section{Two-dimensional log-concavity}\label{S2}
A sequence $\{a_k\}$ of nonnegative real numbers is called \emph{log-concave} if 
\beq\label{y50}
a_k^2\ge a_{k-1}a_{k+1} \quad \text{for all }k.\eeq 
The log-concavity extends to arbitrary real sequences by taking the sequence of absolute values $|a_k|$. 
A nonnegative finite sequence $\left\{a_k\right\}_{k=0}^n$ is \emph{ultra log-concave} if the normalized sequence $$\left\{\frac{a_k}{\binom{n}{k}}\right\}_{0\le k\le n}$$ is log-concave. 
In what follows, any property defined in terms of log-concavity is similarly extended to define the corresponding notion of ultra log-concavity.
 
A polynomial $f(m)=\sum_{k=0}^na_km^k\in \RR_{\ge 0}[m]$ with nonnegative coefficients is said to be \emph{log-concave} if the sequence $\{a_k\}$ is log-concave. We say $f(m)$ has \emph{no internal zeros} if there do not exist integers $i<j<k$ such that $a_i\ne 0$, $a_j =0$ and $a_k\ne 0$. 

In this section, we generalize the notion of log-concavity to a two-dimensional array $\{A^j_k\}_{0\le j,k\le N}$ of nonnegative real numbers or to a bivariate polynomial
\beq\label{y0} A (m,t) =\sum_{k\ge 0}\sum_{j\ge 0 } A_k^j m^jt^k ~\in \RR_{\geq0}[m,t].\eeq
Let $A_k(m) = \sum_{j\ge 0} A_k^jm^j$ and  $A^j(t) = \sum_{k\ge 0} A_k^jt^k$ so that we have
$$A(m,t)=\sum_kA_k(m)t^k=\sum_jA^j(t)m^j.$$
From the matrix $\{A^j_k\}$, there are many ways to extract a sequence of numbers and define log-concavity.  In this paper, we will consider the following natural choices. 

\begin{definition}\label{def:lc} Let $A (m,t) =\sum_{j,k\ge 0}A_k^j m^jt^k ~\in \RR_{\geq0}[m,t]$ and $t_0, m_0\in \RR$.
    \begin{enumerate}
        \item $A(m,t)$ is \emph{length log-concave in degree $k$} if $A_k(m) = \sum_{j\ge 0} A_k^jm^j\in \RR_{\geq0}[m]$ is log-concave.
        \item $A(m,t)$ is \emph{length log-concave at $t=t_0$} if 
        $A(m,t_0)\in \RR_{\geq0}[m]$ is log-concave. 
        \item $A(m,t)$ is \emph{degree log-concave in length $j$} if $A^j(t)= \sum_{k\ge 0} A_k^jt^k\in \RR_{\geq0}[t]$ is log-concave. 
        \item $A(m,t)$ is \emph{degree log-concave at $m=m_0$} 
        if 
        $A(m_0,t)\in \RR_{\geq0}[t]$ is log-concave.
    \end{enumerate}
\end{definition}
This definition extends to arbitrary bivariate real polynomials $A(m,t)\in \RR[m,t]$ by taking the absolute values $|A^j_k|$ of the coefficients. 

We call $t$ the \emph{degree} variable and $m$ the \emph{length} variable since
we will see below that if $A(m,t)$ comes from the cohomology $H^*(X)$ of a space $X$ acted on by the symmetric group $\symS_n$, the exponent of the variable $m$ corresponds to the length of a partition and the exponent of the variable $t$ keeps track of the cohomological degree. 
 
\begin{remark}\label{y6}
In \S\ref{S3.3}, we will associate a polynomial $\stan_V(m)\in \Q[m]$ of degree $n$ 
to each finite dimensional $\symS_n$-representation $V\ne 0$ by applying the principal specialization map in \cite[\S7.8]{Stabook} to the Frobenius characteristic of $V$. 
By Lemma~\ref{l:mult}, the coefficient of $m^n$ in $\stan_V(m)$ is $\frac{1}{n!}\dim\, V$ and the dimension of the invariant part equals $$\dim\, V^{\symS_n}=\stan_V(1).$$ 
When the $\symS_n$-representation $V=\bigoplus_{k\ge 0}V_k$ is graded, we associate the polynomial $$\stan_V(m,t)=\sum_k \stan_{V_k}(m)t^k\in \Q[m,t].$$ 
In this case, the degree log-concavity in length $n$ is equivalent to the 
log-concavity of the sequence $\{\dim\, V_k\}_k$ 
while the degree log-concavity at $m=1$ is equivalent to the 
log-concavity of the sequence $\{\dim\, V_k^{\symS_n}\}_k$ 
of the dimensions of the invariant parts. 

In particular, when $$V=\bigoplus_{k\ge 0}H^{2k}(\overline{\cM}_{0,n})$$ is the cohomology of the moduli space $\overline{\cM}_{0,n}$ of $n$-pointed stable curves of genus $0$, we note that  
the degree log-concavity in length $n$ 
is the log-concavity of the Betti numbers proved asymptotically by Aluffi-Chen-Marcolli \cite{ACM}, while 
the degree log-concavity at $m=1$ 
is the log-concavity of the invariant part proved asymptotically in \cite{CKL2}. 
Below we will find more log-concavity results about the graded $\symS_n$-module $H^*(\overline{\cM}_{0,n})$ in terms of the  log-concavities in Definition~\ref{def:lc}.
\end{remark}

\begin{remark}\label{y7}
In \S\ref{S3.2}, we will associate a polynomial $\stan_F(m)\in \Q[m]$ of degree at most $n$ to each  symmetric function $F\in \Lambda_n$ of degree $n$ by the principal specialization map in \cite[\S7.8]{Stabook}. 
When $F=\sum_{k\ge 0}F_kt^k\in \Lambda_n[t]$ with $F_k\in \Lambda_n$, we associate $\stan_F(m,t)=\sum_{k\ge 0}\stan_{F_k}(m)t^k\in \Q[m,t].$

Given a graph $\Gamma$, we have the chromatic quasisymmetric function $\cqsf_\Gamma(t)$ defined in \cite{SW}. 
When $\Gamma$ is the graph associated to a Hessenberg function in \S\ref{ss:Hess}, $F=\cqsf_\Gamma(t)\in\Lambda_n[t]$ and
$\stan_F(m,1)=\stan_{\csf_\Gamma}(m)$ is the usual chromatic polynomial $\chrom_\Gamma(m)$ of $\Gamma$. 
Therefore, the length log-concavity at $t=1$ of $\stan_F(m,t)$ is the log-concavity of the chromatic polynomial of the graph $\Gamma$ which is a special case of the celebrated theorem of June Huh in  \cite{Huh}. 
\end{remark}

We end this section with the following well known results which will be used below. 

\begin{lemma}\cite[Proposition 2]{Sta3}\label{lem:lc.product}
	If $f, g\in \RR_{\geq0}[m]$ are log-concave with no internal zeros, then so is $fg\in \RR_{\geq0}[m]$.
\end{lemma}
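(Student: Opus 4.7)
The plan is to deduce this from the Cauchy--Binet identity applied to the Toeplitz matrices of convolution. Write $f(m) = \sum_i a_i m^i$ and $g(m) = \sum_j b_j m^j$, and set $c_l := \sum_i a_i b_{l-i}$ so that $fg = \sum_l c_l m^l$. The hypotheses imply that the supports of $\{a_i\}$ and $\{b_j\}$ are intervals (by no internal zeros) with all entries strictly positive inside (since an interior zero would violate log-concavity flanked by positive neighbors). Consequently $\{c_l\}$ is supported on the Minkowski sum of the two supports, an interval, and every $c_l$ in this range is a sum of strictly positive terms, giving the no-internal-zeros conclusion for $fg$ at once.

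For the log-concavity $c_k^2 \geq c_{k-1}c_{k+1}$, I would introduce the bi-infinite Toeplitz matrices $A = (a_{i-k})_{i,k \in \Z}$ and $B = (b_{k-j})_{k,j \in \Z}$; their product satisfies $(AB)_{i,j} = c_{i-j}$, and $c_k^2 - c_{k-1}c_{k+1}$ is precisely the $2 \times 2$ minor of $AB$ at rows $\{k, k+1\}$ and columns $\{0, 1\}$. The Cauchy--Binet identity then yields
\[
c_k^2 - c_{k-1}c_{k+1} \;=\; \sum_{k_1 < k_2} \det\!\begin{pmatrix} a_{k-k_1} & a_{k-k_2} \\ a_{k+1-k_1} & a_{k+1-k_2} \end{pmatrix}\det\!\begin{pmatrix} b_{k_1} & b_{k_1-1} \\ b_{k_2} & b_{k_2-1} \end{pmatrix},
\]
and this sum is effectively finite because $\{a_i\}$ and $\{b_j\}$ have finite support.

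It then suffices to verify each $2\times 2$ minor is nonnegative. For the first, substituting $p = k - k_1$ and $q = k - k_2$ (so $p > q$), it equals $a_p a_{q+1} - a_{p+1}a_q$; log-concavity of $\{a_i\}$ together with no internal zeros is equivalent to the ratio $a_{i+1}/a_i$ being nonincreasing on the support, which forces $a_p a_{q+1} \geq a_{p+1}a_q$ for $p > q$. The second minor is handled identically via log-concavity of $\{b_j\}$. The main obstacle is the careful handling of boundary cases, where $p$ or $q$ lies outside the support so some entries vanish and the ratio argument does not directly apply. Here the no-internal-zeros hypothesis is essential: it ensures the ratios are well defined and monotone throughout the entire support, while boundary entries can only be zero at the extremes of the support, where each affected minor reduces by inspection to either $0$ or a manifestly nonnegative product. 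Without no internal zeros the statement would fail, so this boundary bookkeeping is not merely cosmetic.
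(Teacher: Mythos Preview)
The paper does not give its own proof of this lemma; it is stated with a citation to Stanley's survey \cite{Sta3} and used as a black box. So there is no in-paper argument to compare against.

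Your proof is correct and is essentially the classical one. The Cauchy--Binet expansion of the $2\times 2$ minor of the Toeplitz product is exactly the standard mechanism (this argument goes back at least to Menon), and your reduction of each factor to the monotonicity of the ratios $a_{i+1}/a_i$ on the support is the right way to use log-concavity together with no internal zeros. Your boundary analysis is also sound: when some index falls outside the support, the no-internal-zeros hypothesis forces enough of the four entries to vanish that the minor is either zero or a single nonnegative product, and your closing remark that the hypothesis is genuinely necessary (e.g.\ $(1+m)(1+m^3)$ fails) is apt. One small stylistic point: you invoke Cauchy--Binet for bi-infinite Toeplitz matrices, which is fine here because finite support lets you truncate to a finite matrix before applying the identity, but it would be cleaner to say so explicitly rather than leave it implicit.
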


\begin{lemma}\cite[Theorem 2]{Sta3}\label{lem:realrooted}
    If $f\in \RR_{\geq0}[m]$ has only real roots, then $f$ is log-concave.
\end{lemma}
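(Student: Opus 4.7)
The plan is to reduce the statement to the classical Newton inequalities on elementary symmetric polynomials of nonnegative real numbers. First I would write $f(m)=\sum_{k=0}^n a_k m^k$ with $a_n>0$, and observe that since all coefficients are nonnegative, Descartes' rule of signs forces every real root of $f$ to be nonpositive. Hence I can factor
\[
f(m)=a_n\prod_{i=1}^n(m+r_i),\qquad r_i\geq 0,
\]
which gives $a_k=a_n\,e_{n-k}(r_1,\dots,r_n)$, where $e_j$ is the $j$-th elementary symmetric polynomial. The desired log-concavity $a_k^2\geq a_{k-1}a_{k+1}$ is then equivalent to
\[
e_j(r)^2\geq e_{j-1}(r)\,e_{j+1}(r)\quad\text{for all } 1\leq j\leq n-1.
\]

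Next I would establish the stronger (ultra log-concave) Newton inequality
\[
\left(\frac{e_j}{\binom{n}{j}}\right)^{\!2}\geq \frac{e_{j-1}}{\binom{n}{j-1}}\cdot\frac{e_{j+1}}{\binom{n}{j+1}}.
\]
The key input is that two operations preserve real-rootedness: differentiation (by Rolle's theorem, valid when roots are counted with multiplicity) and the reversal $g(x)\mapsto x^{\deg g}g(1/x)$, which simply inverts the nonzero roots. Starting from the polynomial $\prod_{i=1}^n(x+r_i)$, I would apply an appropriate sequence of these two operations so that, for a fixed index $j$, the resulting polynomial is a real-rooted quadratic $Ax^2+Bx+C$ in which $A,B,C$ are explicit positive multiples of $e_{j+1},e_j,e_{j-1}$ by binomial coefficients. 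Newton's inequality then becomes the discriminant condition $B^2\geq 4AC$ for a real-rooted quadratic, which is immediate. Combined with the elementary fact $\binom{n}{j}^2\geq \binom{n}{j-1}\binom{n}{j+1}$, this implies $e_j^2\geq e_{j-1}e_{j+1}$ and hence $a_k^2\geq a_{k-1}a_{k+1}$, as required.

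The main obstacle I expect is not conceptual but bookkeeping: one must choose the precise number of differentiations and reversals so that the coefficients of the final quadratic land on $(e_{j-1},e_j,e_{j+1})$ with exactly the binomial normalization above. Once this is carefully set up, everything else follows from Rolle's theorem and the quadratic formula. An alternative plan that sidesteps the bookkeeping is to work in the Laguerre--P\'olya class of real-rooted polynomials and invoke its closure under suitable Hadamard-type operators, which yields the two-term log-concavity inequality directly without tracking binomial factors.
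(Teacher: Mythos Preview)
Your argument is the standard and correct derivation of log-concavity from real-rootedness via Newton's inequalities; the reduction to a real-rooted quadratic by iterated differentiation and reversal is exactly the classical proof, and the bookkeeping you flag is routine. Note, however, that the paper does not supply its own proof of this lemma at all: it is stated with a citation to \cite[Theorem~2]{Sta3} and used as a black box, so there is no argument in the paper to compare against.
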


\bigskip

\section{Characteristic polynomials of symmetric functions and representations}\label{s:char.poly}

In this section, we investigate useful properties of the principal specialization map or the \emph{Stanley map} in \cite[\S7.8]{Stabook} 
which associates a polynomial $\stan_F(m)\in \Q[m]$ to each symmetric function $F\in \Lambda_n$ of degree $n$. 
A remarkable feature of the Stanley map 
$$\stan:\Lambda_n\lra \Q[m], \quad F\mapsto \stan_F(m)$$
is that it sends the plethysm operation on $\Lambda_n$ to the composition on $\Q[m]$ (Proposition~\ref{p:plethysm}). 
In fact, this property uniquely determines the Stanley map (Theorem~\ref{thm:characterize.chi}). 
Since the plethysm operation usually causes computational difficulties, the Stanley map levitates computational burdens while keeping core information,  just like the characteristic polynomial of a matrix. 
In this vein, we call $\stan_F(m)\in \Q[m]$ the \emph{characteristic polynomial} of $F$ (Definition~\ref{def:charpoly}). 
When the symmetric function is the chromatic symmetric function of a graph defined by Stanley in \cite{Sta2}, its characteristic polynomial is the usual chromatic polynomial of Birkhoff's, which is log-concave by \cite{Huh}. 

By applying the Stanley map to the Frobenius characteristic $\ch(V)\in \Lambda_n$ of an $\symS_n$-module $V$, we obtain a polynomial 
$\stan_V(m)\in \Q[m]$ which we call the \emph{characteristic polynomial} of $V$. When 
$V=\bigoplus_{k\ge 0} V_k$ is graded, its characteristic polynomial 
$$\stan_V(m,t)=\sum_k \stan_{V_k}(m) t^k\in \Q[m,t]$$
allows us to consider the two-dimensional log-concavities in Definition~\ref{def:lc}.
In the subsequent sections, we will find interesting examples where the log-concavities hold.

\subsection{Symmetric functions and plethysm}\label{ss:plethysm}
In this subsection, we set up our notation on symmetric functions and recall basic facts about plethysm. 
For further details and undefined terms, we refer the reader to \cite{Mac}. 

Let 
\[\Lambda=\lim_{\longleftarrow}\Q\llbracket x_1,\cdots,x_n\rrbracket^{\symS_n}\] be the power series ring of symmetric functions in variables $x_1,x_2,\cdots$, with \emph{rational} coefficients.
Let $\Lambda_n\subset \Lambda$ denote the subspace of elements of degree $n$, so that $\Lambda=\prod_{n\geq 0}\Lambda_n$.
For an integer $n\ge 1$, let 
\[\sfp_n=\sum_{i\geq 1}x_i^n, \quad \sfh_n=\sum_{1\le i_1 \le \cdots \le i_n}x_{i_1}\cdots x_{i_n} \and \sfe_n=\sum_{1\le i_1 < \cdots < i_n}x_{i_1}\cdots x_{i_n} \]
be the $n$-th power sum symmetric function, 
homogeneous symmetric function and 
elementary symmetric function, respectively. 
For convenience, we set $\sfp_0=\sfh_0=\sfe_0=1$. 
For a partition $\lambda=(\lambda_1,\cdots,\lambda_\ell)\vdash n$,  
let 
\[\sfp_\lambda=\sfp_{\lambda_1}\cdots\sfp_{\lambda_{\ell}}, \quad \sfh_\lambda=\sfh_{\lambda_1}\cdots\sfh_{\lambda_{\ell}} \and \sfe_\lambda=\sfe_{\lambda_1}\cdots\sfe_{\lambda_{\ell}}.\]
Then each of $\{\sfp_\lambda\}_{\lambda\vdash n}$, $\{\sfh_\lambda\}_{\lambda\vdash n}$ and $\{\sfe_\lambda\}_{\lambda\vdash n}$ is a basis for $\Lambda_n$.

For a partition $\lambda$, define $$\centz_\lambda = \prod_{i\geq 1}i^{m_i} m_i !$$ where $m_i$ is the number of parts of $\lambda$ equal to $i$. We let $\centz_0=1$. 
Define a symmetric bilinear form $\langle -,-\rangle$ on $\Lambda$  by 
\beq\label{y4}
\langle\sfp_\lambda,\sfp_\mu\rangle=\centz_\lambda\d_{\lambda \mu}\eeq 
so that $\{\sfp_\lambda\}$ is an orthogonal basis. 

There is a ring homomorphism $\omega:\Lambda \to \Lambda $ defined by 
\beq\label{y5} \omega(\sfp_n)=(-1)^{n-1} \sfp_n\eeq 
Clearly $\omega$ is an isometric involution and satisfies $\omega(\sfe_\lambda)=\sfh_\lambda$ for any $\lambda\vdash n$.

\medskip
On $\Lambda$, there is an associative binary operation $\circ$, called the \emph{plethysm}, 
which is uniquely determined by the following properties: for $F,G,H\in\Lambda$,
\begin{enumerate}
	\item $F\circ \sfp_n=\sfp_n\circ F=F(x_1^n,x_2^n,\cdots)$ for $n\geq 1$;
	\item $(F+G)\circ H=F\circ H+G\circ H$;
	\item $(FG)\circ H=(F\circ H)(G\circ H)$.
\end{enumerate}
Note that $\sfp_1$ is the two-sided identity with respect to $\circ$ by (1). This naturally extends to such an operation $\circ$ on $\Lambda\llbracket t\rrbracket$ satisfying (2) and (3) with $\Lambda$ replaced by $\Lambda\llbracket t\rrbracket$ as well as 
\begin{enumerate}
	\item[$(1')$] $F\circ \sfp_n=\sfp_n\circ F=F(t^n,x_1^n,x_2^n,\cdots)$ for $n\geq1$,  and $t\circ F=t$. 
\end{enumerate}

For $F\in \Lambda$ or $\Lambda\llbracket t\rrbracket$ with no constant term, the \emph{plethystic exponential} of $F$ is defined to be
\beq\label{y13} \Exp(F)=\sum_{r\geq 0}\sfh_r\circ F=\exp\sum_{r\geq1}\frac{1}{r}\sfp_r\circ F\eeq
where the second equality holds since $\sum_{r\geq0}\sfh_r=\exp\sum_{r\geq1}\frac{1}{r}\sfp_r$.

\subsection{Characteristic polynomial of a symmetric function}\label{S3.2}

In this subsection, we introduce the Stanley map and the characteristic polynomials of symmetric functions. 
We further characterize the Stanley map as the only algebra homomorphism that converts plethysm to composition. 

\begin{definition}\label{def:charpoly}
	Let $F\in \Lambda_n$ be a symmetric function of degree $n$ such that $F=\sum_{\lambda \vdash n}c_\lambda \sfp_\lambda$.  The \emph{characteristic polynomial} of $F$ is defined to be
	\beq\label{y3}\stan_F(m)=\sum_{\lambda \vdash n }c_\lambda m^{\ell(\lambda)} \ \ \in \QQ[m]\eeq
	where $\ell(\lambda)$ denotes the length of a partition $\lambda$. For 
	$F=\sum_{k}F_{k}t^k\in \Lambda_n[t]$ 
	with $F_k\in \Lambda_n$ and  a formal variable $t$, 
	we define
	\[\stan_F(m,t)=\sum_{k}\stan_{F_{k}}(m)t^k\ \ \in \QQ[m,t].\] 
    \end{definition}
    Note that the degree of $\stan_F$ is at most $n$.  It is easy to see that for any positive integer $m_0$,  $\stan_F(m_0)$ is obtained from $F$ by the substitution \eqref{4}. 
By \eqref{y4}, \eqref{y3} can be also written as 
	\beq\label{3}\stan_F(m)=\sum_{\lambda \vdash n}\centz_\lambda^{-1}\langle F,\sfp_\lambda\rangle m^{\ell(\lambda)}.\eeq

By Definition~\ref{def:charpoly}, we have the map 
\beq\label{y9} \stan:\Lambda_n\lra \QQ[m], \quad F\mapsto \stan_F(m)\eeq
and its extension
\beq\label{y10} \widehat \stan: \Lambda\lra \Q[m]\llbracket q \rrbracket, \quad (F\in \Lambda_n)\mapsto \stan_F(m) q^n\eeq
where $q$ is a formal variable recording the degree $n$ of $F$. 
Note that \eqref{y10} is a $\QQ$-algebra homomorphism, called the \emph{principal specialization} in \cite[\S7.8]{Stabook},
or the \emph{Stanley map}.

A remarkable feature of \eqref{y9} and \eqref{y10} is that they convert plethysm to composition. 
\begin{proposition}\label{p:plethysm}
	For any $F\in \Lambda_n$ and $G\in \Lambda_{n'}$ with $n,n'\geq 0$,
	\beq\label{1}\stan_{F\circ G}(m)=(\stan_F\circ \stan_G)(m)=\stan_F(\stan_G(m)).\eeq
\end{proposition}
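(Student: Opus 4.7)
The plan is to exploit that both sides of \eqref{1} are well behaved under $\QQ$-linearity in $F$ and under products, so that the identity reduces to the case $F=\sfp_n$. To set this up, first I would verify that the Stanley map $\stan$ is a $\QQ$-algebra homomorphism, that is, $\stan_{FF'}(m)=\stan_F(m)\stan_{F'}(m)$ for $F\in\Lambda_n$ and $F'\in\Lambda_{n'}$. This follows directly from \eqref{y3}: if $F=\sum_\lambda c_\lambda\sfp_\lambda$ and $F'=\sum_\mu c'_\mu \sfp_\mu$, then $FF'=\sum_{\lambda,\mu}c_\lambda c'_\mu \sfp_{\lambda\cup\mu}$ and $\ell(\lambda\cup\mu)=\ell(\lambda)+\ell(\mu)$. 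Both sides of \eqref{1} are then $\QQ$-linear in $F$ (for the right-hand side, because $F\mapsto \stan_F$ is linear), so by expanding $F$ in the basis $\{\sfp_\lambda\}$ it suffices to prove \eqref{1} when $F=\sfp_\lambda$.

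Next, using the plethysm axioms (2) and (3) from \S\ref{ss:plethysm}, I would write $\sfp_\lambda\circ G=\prod_{i=1}^{\ell(\lambda)}(\sfp_{\lambda_i}\circ G)$, and then apply the algebra-homomorphism property of $\stan$ to obtain
\[
\stan_{\sfp_\lambda\circ G}(m)=\prod_{i=1}^{\ell(\lambda)}\stan_{\sfp_{\lambda_i}\circ G}(m).
\]
On the other hand, $\stan_{\sfp_\lambda}(m)=m^{\ell(\lambda)}$ by \eqref{y3}, so $\stan_{\sfp_\lambda}(\stan_G(m))=\stan_G(m)^{\ell(\lambda)}$. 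Comparing the two displays, the identity \eqref{1} for $F=\sfp_\lambda$ reduces to the single claim that $\stan_{\sfp_n\circ G}(m)=\stan_G(m)$ for every $n\geq 1$ and $G\in\Lambda_{n'}$.

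To close this base case, I would use axiom (1): $\sfp_n\circ\sfp_k=\sfp_n(x_1^k,x_2^k,\ldots)=\sfp_{nk}$, which by multiplicativity of plethysm gives $\sfp_n\circ\sfp_\mu=\sfp_{n\mu}$ where $n\mu=(n\mu_1,\ldots,n\mu_{\ell(\mu)})$. Writing $G=\sum_\mu c_\mu\sfp_\mu$ and using linearity of $\sfp_n\circ(-)$ therefore yields $\sfp_n\circ G=\sum_\mu c_\mu \sfp_{n\mu}$. Since $\ell(n\mu)=\ell(\mu)$, we conclude
\[
\stan_{\sfp_n\circ G}(m)=\sum_\mu c_\mu m^{\ell(n\mu)}=\sum_\mu c_\mu m^{\ell(\mu)}=\stan_G(m),
\]
as required; this is also consistent with $\stan_{\sfp_n}(m)=m$.

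The only genuine content is multiplicativity of $\stan$ and the computation $\sfp_n\circ\sfp_\mu=\sfp_{n\mu}$; I expect no real obstacle beyond bookkeeping, as the multiplicativity is immediate from the length-additivity $\ell(\lambda\cup\mu)=\ell(\lambda)+\ell(\mu)$ and the fact that $\{\sfp_\lambda\}$ is a basis on which plethysm has an explicit closed form.
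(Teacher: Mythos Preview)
Your proof is correct and follows essentially the same approach as the paper: expand $F$ in the power-sum basis, use multiplicativity of $\stan$ together with the plethysm axioms to reduce to the base case $\stan_{\sfp_n\circ G}=\stan_G$, and verify that base case via $\ell(n\mu)=\ell(\mu)$. The paper's version is simply more terse, asserting the base case ``by definition'' and compressing the multiplicativity step into a single displayed chain of equalities.
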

\begin{proof}
	By definition,  $\stan_{\sfp_r\circ G}=\stan_G$ for any $r\geq 1$ and $G\in \Lambda_{n'}$.
	Moreover, if $F=\sum_{\lambda\vdash n}a_\lambda \sfp_\lambda$, then $\stan_F(m)=\sum_{\lambda\vdash n} a_\lambda m^{\ell(\lambda)}$. Consequently,
\[\stan_{F\circ G}(m)=\sum_{\lambda\vdash n}a_\lambda \prod_{i=1}^{\ell(\lambda)}\stan_{\sfp_{\lambda_i}\circ G}(m)=\sum_{\lambda\vdash n}a_\lambda(\stan_G(m))^{\ell(\lambda)}=\stan_F(\stan_G(m))\]
for any $G\in \Lambda_{n'}$, where $\lambda=(\lambda_1,\cdots, \lambda_{\ell(\lambda)})$.
\end{proof}

\def\hstan{\widehat\stan}
If we extend the composition $(f\circ g)(m)=f(g(m))$ to an operation on $\QQ[m]\llbracket q \rrbracket$ by 
\beq\label{y12} \left(\sum_i f_i(m)q^i\right)\circ \left(\sum_j g_j(m)q^j\right)=\sum_{i,j}f_i(g_j(m))q^{ij},\eeq 
Proposition~\ref{p:plethysm} is equivalent to
\beq\label{y11}  \hstan_{F\circ G}=\hstan_F\circ \hstan_G \in \QQ[m]\llbracket q \rrbracket \quad\text{for }F,G\in \Lambda.\eeq
Conversely,  \eqref{1} or \eqref{y11} determines $\widehat\stan$ and thus $\stan$.
\begin{theorem} 
\label{thm:characterize.chi}
	The map $\widehat \stan :\Lambda\to \Q[m]\llbracket q\rrbracket$ is the unique $\Q$-algebra homomorphism that sends $\Lambda_n$ to $\QQ[m]q^n$ for every $n$ and satisfies \eqref{y11}.
\end{theorem}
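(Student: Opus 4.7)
Existence is Proposition~\ref{p:plethysm}. For uniqueness, suppose $\phi:\Lambda\to\Q[m]\llbracket q\rrbracket$ is any such homomorphism. Since $\Lambda$ is the completed polynomial algebra on $\sfp_1,\sfp_2,\ldots$, the grading condition forces $\phi(\sfp_n)=f_n(m)q^n$ for polynomials $f_n\in\Q[m]$, and $\phi$ is determined by the sequence $\{f_n\}$. The plan is to pin down each $f_n$ via plethysm compatibility against a short list of identities in $\Lambda$, using the key observation that for single-term elements \eqref{y12} reduces to $(u(m)q^a)\circ(v(m)q^b)=u(v(m))q^{ab}$, so plethysm compatibility on homogeneous inputs becomes a polynomial identity in $m$ among the $f_n$.

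Applying this to $\sfp_r\circ\sfp_s=\sfp_{rs}$ gives $f_{rs}=f_r\circ f_s$ (composition in $\Q[m]$); setting $r=s=1$ shows $f_1$ is idempotent, so by degree comparison either $f_1=m$ or $f_1\equiv c$ is a constant. The constant case propagates: the identity $\sfp_n\circ\sfp_1=\sfp_n$ forces every $f_n$ to be constant, and $f_{rs}=f_r\circ f_s$ collapses these to a single value $c$, so $\phi(F)=\stan_F(c)\,q^{|F|}$. Plethysm compatibility with $F=\sfp_1$ then demands $\stan_G(c)=c$ for every $G$; testing against $G=\sfh_2$ and $G=\sfe_2$ simultaneously yields $c(c+1)/2=c$ and $c(c-1)/2=c$, whose only common solution is $c=0$, giving the trivial map excluded by nontriviality of $\phi$. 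Therefore $f_1=m$.

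Next, the identity $\sfp_r\circ\sfp_1^s=\sfp_r^s$ gives $f_r(m^s)=f_r(m)^s$ for every $s\geq 1$; taking $s=2$ and comparing coefficients forces each nonzero $f_r$ to be a monomial $m^{e_r}$ with $e_r\in\Z_{\geq 0}$. Finally, $\sfp_r\circ 2\sfh_2=\sfp_r^2+\sfp_{2r}$ (using $2\sfh_2=\sfp_1^2+\sfp_2$) yields the polynomial identity $(m^2+m^{e_2})^{e_r}=m^{2e_r}+m^{e_re_2}$. Specializing $r=2$ and expanding via the binomial theorem forces $e_2=1$; the equation then becomes $(m+1)^{e_r}=m^{e_r}+1$, whose unique nonnegative solution is $e_r=1$ (the case $e_r=0$ gives $1\neq 2$, and $e_r\geq 2$ produces nonzero binomial cross terms). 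Hence $f_n=m$ for every $n\geq 1$, and $\phi=\widehat\stan$. The hardest step is the clean disposal of the constant case for $f_1$: this cannot be eliminated by multiplicativity alone and requires invoking plethysm compatibility against two specific symmetric functions ($\sfh_2$ and $\sfe_2$) whose characteristic polynomials share no common nontrivial fixed point.
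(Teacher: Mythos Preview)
Your argument has a genuine gap. After establishing $f_1=m$, you show that each \emph{nonzero} $f_r$ is a monomial $m^{e_r}$, but you never exclude $f_r=0$ for $r\geq 2$. In fact the assignment $f_1=m$ and $f_r=0$ for all $r\geq 2$ is consistent with every plethysm identity you invoke: $\sfp_r\circ\sfp_s=\sfp_{rs}$, $\sfp_r\circ\sfp_1^s=\sfp_r^s$, and $\sfp_r\circ 2\sfh_2=\sfp_r^2+\sfp_{2r}$ all reduce to $0=0$ once $r\geq 2$ (and the last also for $r=1$, since then both sides are $m^2$). So your list of test identities does not determine $\phi$. A related problem appears in your disposal of the constant case: excluding $c=0$ by ``nontriviality of $\phi$'' is not legitimate, since the theorem carries no such hypothesis, and the map sending every $\sfp_n$ to $0$ is a perfectly good graded $\Q$-algebra homomorphism.

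Both gaps are repaired by the single move the paper makes and you never do: test against constants $G=a\in\Lambda_0=\Q$. Since $\sfp_n\circ a=a$ and $\phi(a)=a$, compatibility \eqref{y11} together with \eqref{y12} forces $f_n(a)=a$ for every $a\in\Q$; as $f_n$ is a polynomial, $f_n(m)=m$ for all $n\geq 1$, and uniqueness follows in one line. The structural point is that all of your tests plethyze $\sfp_r$ against elements of \emph{positive} degree, and this is provably too weak to pin down $\phi$; the decisive constraint comes from degree zero. What you call ``the hardest step'' disappears entirely under this approach.
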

	\begin{proof}
Let $\hstan'$ be another such homomorphism and let $\stan':\Lambda_n\to \QQ[m]$ be defined by $\hstan'_F=\stan'_F(m)q^n$ for $F\in \Lambda_n$.  
Since $\sfp_n\circ a=a$ and $\stan'_a=a$  for any $n\geq1$ and $a\in \Lambda_0=\Q$, 
	\[\widehat\stan'_{\sfp_n}(a)=\widehat\stan'_{\sfp_n}(\hstan'_a(m))=\widehat\stan'_{\sfp_n\circ a}(m)=\stan'_a(m)q^n=aq^n.\]
Therefore, $\stan'_{\sfp_n}(m)=m$ and $\hstan'_{\sfp_n}=mq^n$ for $n\geq 1$. The uniqueness follows from this because $\hstan'$ is a $\QQ$-algebra homomorphism. 
	\end{proof}

All these extend to a graded version with a new variable $t$ as 
\beq\label{7}\widehat \stan:\Lambda\llbracket t\rrbracket \lra \Q[m]\llbracket q,t\rrbracket, \quad \sfp_nt^k\mapsto mq^nt^k.\eeq
By abuse of notation, we denote this map also by $\widehat \stan$. 
If we extend the composition to an operation $\QQ[m]\llbracket q \rrbracket\times \QQ[m]\llbracket q,t \rrbracket\to \QQ[m]\llbracket q,t \rrbracket$ by 
\beq\label{y12a} \left(\sum_i f_i(m)q^i\right)\circ \left(\sum_{j,k} g_{jk}(m)q^jt^k\right)=\sum_{i,j,k}f_i(g_{jk}(m))q^{ij}t^{ik},\eeq 
Proposition~\ref{p:plethysm} upgrades to 
\beq\label{y11a}  \hstan_{F\circ G}=\hstan_F\circ \hstan_G \in \QQ[m]\llbracket q,t \rrbracket \quad\text{for }F\in \Lambda,~G\in \Lambda\llbracket t\rrbracket.\eeq
\begin{theorem}\label{thm:characterize.hatchi}
	The map \eqref{7} is the unique $\Q$-algebra homomorphism 
	that sends $\Lambda_n\llbracket t\rrbracket$ to $\Q[m]\llbracket t \rrbracket q^n$ for every $n$ and satisfied \eqref{y11a}.
\end{theorem}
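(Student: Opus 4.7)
The plan is to mimic the proof of Theorem~\ref{thm:characterize.chi}, handling the additional topological generator $t$ of the source at the end. Let $\hstan'$ be any $\Q$-algebra homomorphism satisfying the hypotheses.

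First, I would show $\hstan'|_\Lambda=\hstan|_\Lambda$. The composition in \eqref{y12a} takes its left factor from $\Q[m]\llbracket q\rrbracket$ (no $t$), so for \eqref{y11a} with $F\in\Lambda$ to be meaningful, $\hstan'(F)$ must lie in $\Q[m]\llbracket q\rrbracket$; combined with the grading hypothesis, this refines to $\hstan'(\Lambda_n)\subset \Q[m]q^n$. The restriction $\hstan'|_\Lambda$ thus satisfies the hypotheses of Theorem~\ref{thm:characterize.chi}, so $\hstan'|_\Lambda=\hstan|_\Lambda$; in particular $\hstan'(\sfp_n)=mq^n$ for every $n\geq 1$.

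Second, I would pin down $\hstan'(t)$. By the grading, $g:=\hstan'(t)\in \Q[m]\llbracket t\rrbracket$ (the $q^0$-part of the target). Applying \eqref{y11a} to the plethystic identity $\sfp_n\circ t=t^n$ gives
\[
g(m,t)^n=\hstan'(t^n)=\hstan'(\sfp_n)\circ g(m,t)=(mq^n)\circ g(m,t)=g(m,t^n)
\]
for every $n\geq 1$, where the last equality uses \eqref{y12a} with $f_n(m)=m$. Reading the grading as also preserving the $t$-adic filtration, $g$ has $t$-valuation exactly one, i.e., $g=c(m)t+O(t^2)$; matching coefficients in $g^2=g(m,t^2)$ successively forces all higher coefficients to vanish and gives $c(m)^2=c(m)$, and the identity $c(m)^n=c(m)$ for all $n$ then forces $c\equiv 1$, so $g=t$.

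Finally, since $\hstan'$ agrees with $\hstan$ on the topological generators $\{\sfp_n\}_{n\geq 1}\cup\{t\}$ of $\Lambda\llbracket t\rrbracket$, continuity in the $(q,t)$-adic topology on the target yields $\hstan'=\hstan$ on the whole source. The hard part I anticipate is the second step: the functional equation $g(m,t)^n=g(m,t^n)$ alone admits spurious formal-series solutions like $g=t^a$ for various $a\geq 0$, and one must carefully exploit the grading hypothesis, read as preserving $t$-adic valuation, to single out $g=t$.
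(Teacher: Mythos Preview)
Your first step is exactly what the paper does: its entire proof is the single sentence ``basically the same as Theorem~\ref{thm:characterize.chi},'' i.e.\ use $\sfp_n\circ a=a$ for $a\in\Q$ to force $\hstan'(\sfp_n)=mq^n$, and then conclude because $\{\sfp_n\}$ generate. The paper does not treat $\hstan'(t)$ separately at all.

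Your second step, however, exposes a genuine gap that the paper glosses over. You are correct that the functional equation $g(m,t)^n=g(m,t^n)$ has the spurious solutions $g=0$ and $g=t^a$ for every $a\ge 0$. Your proposed fix, ``read the grading hypothesis as also preserving the $t$-adic filtration,'' is not supported by the statement: the hypothesis $\Lambda_n\llbracket t\rrbracket\to\Q[m]\llbracket t\rrbracket q^n$ tracks only the $\Lambda$-degree (the $q$-grading) and imposes nothing on $t$-degree. Even granting filtration preservation, you would only get $t$-valuation $\ge 1$, not exactly $1$, so $g=t^a$ with $a\ge 2$ would survive. Moreover, the composition \eqref{y12a} as literally written is delicate for non-monomial $G$ (for instance, try $F=\sfp_1^2$ and $G=\sfp_1+\sfp_1 t$: the cross term $m^2q^2t$ in $\hstan(F\circ G)$ is absent from $\hstan(F)\circ\hstan(G)$ computed via \eqref{y12a}), so pinning down $g$ by testing \eqref{y11a} on richer $F,G$ is also problematic. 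The cleanest repair is to strengthen the hypothesis to ``$\Q\llbracket t\rrbracket$-algebra homomorphism'' (equivalently, require $\hstan'(t)=t$); then step 2 is vacuous and the proof really is identical to that of Theorem~\ref{thm:characterize.chi}, which is presumably the paper's intent.
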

\begin{proof}
	The proof is basically the same as that for   Theorem~\ref{thm:characterize.chi}.
\end{proof}

Finally we observe that $\stan$ satisfies a duality with respect to the involution \eqref{y5} on $\Lambda$ (cf.~\cite[Exercise~24]{StabookEx}). 
\begin{lemma}[$\omega$-duality]\label{l:omega}
	For $F\in \Lambda_n$, $\stan_{\w F}(m)=(-1)^n\stan_F(-m)$.
\end{lemma}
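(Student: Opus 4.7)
The plan is a direct computation in the power sum basis, as both $\omega$ and $\stan$ are defined diagonally on $\{\sfp_\lambda\}$.

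First I would expand $F \in \Lambda_n$ uniquely as $F = \sum_{\lambda \vdash n} c_\lambda \sfp_\lambda$. Then, since $\omega$ is a ring homomorphism satisfying $\omega(\sfp_r) = (-1)^{r-1}\sfp_r$, one has
\[
\omega(\sfp_\lambda) = \prod_{i=1}^{\ell(\lambda)} (-1)^{\lambda_i - 1}\sfp_{\lambda_i} = (-1)^{n - \ell(\lambda)} \sfp_\lambda
\]
for every $\lambda \vdash n$, using $\sum_i \lambda_i = n$. Hence $\omega F = \sum_{\lambda \vdash n} (-1)^{n - \ell(\lambda)} c_\lambda \sfp_\lambda$.

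Next I would apply the definition \eqref{y3} of the characteristic polynomial to $\omega F$ to obtain
\[
\stan_{\omega F}(m) = \sum_{\lambda \vdash n} (-1)^{n - \ell(\lambda)} c_\lambda m^{\ell(\lambda)} = (-1)^n \sum_{\lambda \vdash n} c_\lambda (-m)^{\ell(\lambda)} = (-1)^n \stan_F(-m),
\]
where the middle equality uses $(-1)^{-\ell(\lambda)} = (-1)^{\ell(\lambda)}$ so that $(-1)^{n-\ell(\lambda)} m^{\ell(\lambda)} = (-1)^n (-m)^{\ell(\lambda)}$.

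There is essentially no obstacle: the statement is a one-line identity once one observes that $\omega$ acts on $\sfp_\lambda$ by the scalar $(-1)^{n - \ell(\lambda)}$, which exactly matches the sign arising from substituting $-m$ into the monomial $m^{\ell(\lambda)}$ of $\stan_F$.
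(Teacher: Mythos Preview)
Your proof is correct and follows exactly the same approach as the paper, which simply notes that the identity is immediate from $\omega\sfp_\lambda=(-1)^{n-\ell(\lambda)}\sfp_\lambda$ for $\lambda\vdash n$. You have merely spelled out the one-line computation in full detail.
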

\begin{proof}
	This follows immediately from $\omega\sfp_\lambda=(-1)^{n-\ell(\lambda)}\sfp_\lambda$ for $\lambda\vdash n$.
\end{proof}

\subsection{Plethysm on polynomials}
For convenience, we introduce the plethysm operation on polynomials as follows. 
\begin{definition} 
For any $F\in \Lambda_n$, we define the plethysm 
\[F\circ (-):\Q[m]\llbracket q,t\rrbracket\lra \Q[m]\llbracket q,t\rrbracket,\quad  
	F\circ \left( f(m)q^it^j\right)=\stan_F(f(m)) q^{ni}t^{nj}.\]
For $F\in \Lambda$, we extend it linearly. 
\end{definition}

It is straightforward to check that $F\circ (-)$ is uniquely characterized by the following properties: for $f\in \Q[m]\llbracket q,t\rrbracket$ and $F,G\in \Lambda$,
\begin{enumerate}
	\item $\sfp_n\circ f=f^{[n]}$ for $n\geq 1$;
	\item $(F+G)\circ f=F\circ f+G\circ f$;
	\item $(FG)\circ f=(F\circ f)(G\circ f)$
\end{enumerate}
where 
$f^{[n]}(m,q,t)=f(m,q^n,t^n)$. 
Note that $\sfp_n\circ(-)$  leaves $m$ unchanged, while $q$ and $t$ are raised to the $n$-th power. If $F=\sum_{\lambda \vdash n }c_\lambda\sfp_\lambda$, then
\[F\circ f =\sum_{\lambda \vdash n}c_\lambda f^{[\lambda]}\]
where we write $f^{[\lambda]}:=f^{[\lambda_1]}\cdots f^{[\lambda_\ell]}$ for $\lambda=(\lambda_1,\cdots, \lambda_\ell)$.

For $f\in \Q[m]\llbracket q,t\rrbracket$ with no constant term, as in \eqref{y13}, we define the \emph{plethystic exponential}  of $f$  to be  
\beq\label{y14} 
\Exp(f)=\sum_{r\geq0}\sfh_r\circ f=\exp\sum_{r\geq 1}\frac{1}{r}\sfp_r\circ f=\exp\sum_{r\geq 1}\frac{1}{r}f^{[r]}.\eeq
\begin{example}[Monomial]\label{ex:Exp.monomial}
    For integers $a,j,n,k$ with $j,n,k\geq0$, we have
    \[
    \begin{split}
        \Exp(am^jq^nt^k)&=\exp \sum_{r\geq1}\frac{1}{r}am^jq^{rn}t^{rk}\\
        &=\exp(- am^j \log (1-q^nt^k))=(1-q^nt^k)^{-am^j}
    \end{split}
    \]
    provided $(n,k)\neq (0,0)$.
\end{example}
\begin{remark} [Alternative description of $\Exp (f)$]
\label{ex:Exp}
    Recall that  
    \beq\label{eq:h.p.expansion}\sfh_n=\sum_{\lambda\vdash n}\centz_\lambda^{-1}\sfp_\lambda\eeq
    for any $n\geq 1$. 
    Then $\Exp(f)$ can also be written as
    \[\Exp(f)=\sum_{r\geq0}\sfh_r\circ f
    =\sum_{\lambda \in \Par}\centz_\lambda^{-1}f^{[\lambda]},\]
    where $\Par$ denotes the set of partitions of nonnegative integers and $f^{[0]}=1$.
\end{remark}

\subsection{Characteristic polynomial of an $\symS_n$-representation}\label{S3.3}
In this subsection, we define the characteristic polynomial of an $\symS_n$-module $V$ by applying the Stanley map $\stan$ to the Frobenius characteristic of $V$. 

\medskip

Let $R_n=\mathrm{Rep}(\symS_n)_\QQ$ be the $\QQ$-vector space spanned by irreducible $\symS_n$-representations and let $R_0=\Q$.
Then $R=\prod_{n\geq0} R_n$ is the ring of virtual representations of the symmetric groups whose ring structure is given by \[V.W=\mathrm{Ind}_{\symS_n\times \symS_m}^{\symS_{n+m}} V\otimes W\] for an $\symS_n$-module $V$ and an $\symS_m$-module $W$. 
Then it is well-known that the Frobenius characteristic map  
\beq\label{eq:Frob}
\ch:R\mapright{\cong}\Lambda, \quad \ch(V)=\ch_{\symS_n}(V)=\frac{1}{n!}\sum_{\sigma\in \symS_n}\mathrm{Tr}_V(\sigma)\sfp_{\sigma}\ \ \text{ for }V\in R_n\eeq
is a ring isomorphism  where 
$\sfp_\sigma:=\sfp_{\lambda(\sigma)}$ and $\lambda(\sigma)\vdash n$ is 
 the cycle partition of $\sigma$. This sends the Specht module $S^\lambda$ to the Schur function $\sfs_\lambda$.

\smallskip

For $\sigma \in \symS_n$, let $\ell(\sigma)$ denote the length of the partition $\lambda(\sigma)$.
\begin{definition}\label{y8}
	The \emph{characteristic polynomial} of an $\symS_n$-representation $V$ 
	is 
	\[\stan_V(m)=\stan_{\ch(V)}(m)=\frac{1}{n!}\sum_{\sigma \in \symS_n}\mathrm{Tr}_V(\sigma)m^{\ell(\sigma)}.\]
	When $V=\bigoplus_{k\in \Z}V_k$ is a graded $\symS_n$-module, 
	we let $\stan_V(m,t)=\sum_{k}\stan_{V_k}(m)t^k$.
\end{definition}

The coefficients of $\stan_V$ keep essential information about $V$. 
\begin{lemma} \label{l:mult}
	Let $V\in R_n$ be an $\symS_n$-representation. 
	\begin{enumerate}
		\item The coefficient of $m^n$ in $\stan_V(m)$ is $\frac{1}{n!}\dim\, V$.
		\item The value $\stan_V(1)$ of $\stan_V$ at $m=1$ is the dimension of the invariant subspace $V^{\symS_n}$.
		\item $(-1)^n\cdot \stan_V(-1)$ is the multiplicity of the sign representation $\sgn$ in the decomposition of $V$ into irreducible representations.
	\end{enumerate}
\end{lemma}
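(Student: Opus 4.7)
The plan is to read off all three statements directly from the defining formula
\[
\stan_V(m)=\frac{1}{n!}\sum_{\sigma\in\symS_n}\mathrm{Tr}_V(\sigma)\,m^{\ell(\sigma)}
\]
by isolating the relevant contributions of $\sigma\in\symS_n$ and invoking standard facts about characters. Each part is essentially a one-line identification, so the main task is to organize them cleanly rather than to overcome any real obstacle.

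For (1), I would observe that $\ell(\sigma)=n$ forces every cycle of $\sigma$ to have length $1$, hence $\sigma=e$. Thus the only term in the sum producing $m^n$ is $\frac{1}{n!}\mathrm{Tr}_V(e)m^n=\frac{1}{n!}\dim V\cdot m^n$, which gives the claimed leading coefficient.

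For (2), setting $m=1$ collapses the formula to $\stan_V(1)=\frac{1}{n!}\sum_{\sigma\in\symS_n}\mathrm{Tr}_V(\sigma)$. This is precisely the trace of the averaging projector $\frac{1}{n!}\sum_\sigma \sigma$ acting on $V$, whose image is $V^{\symS_n}$; hence $\stan_V(1)=\dim V^{\symS_n}$. (Equivalently, this is the inner product $\langle \chi_V,\chi_{\mathbf{1}}\rangle$ of characters.)

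For (3), I would use the classical identity $\mathrm{sgn}(\sigma)=(-1)^{n-\ell(\sigma)}$, valid because a $k$-cycle has sign $(-1)^{k-1}$ and $\ell(\sigma)=n-\sum(\lambda_i-1)$ where $\lambda$ is the cycle type of $\sigma$. Substituting $m=-1$ gives
\[
(-1)^n\stan_V(-1)=\frac{1}{n!}\sum_{\sigma\in\symS_n}(-1)^{n-\ell(\sigma)}\mathrm{Tr}_V(\sigma)=\frac{1}{n!}\sum_{\sigma\in\symS_n}\mathrm{sgn}(\sigma)\,\mathrm{Tr}_V(\sigma),
\]
which is the character inner product $\langle \chi_V,\sgn\rangle$ and hence the multiplicity of the sign representation in $V$. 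There is no real obstacle here; the only point that deserves a sentence of justification is the identity $\mathrm{sgn}(\sigma)=(-1)^{n-\ell(\sigma)}$, and perhaps noting that (1) can also be viewed as the $F=\sfp_{(1^n)}=\sfp_1^n$ coefficient in $\ch(V)$ via \eqref{y3}, since $\centz_{(1^n)}^{-1}\langle\ch(V),\sfp_{(1^n)}\rangle=\frac{1}{n!}\dim V$ by \eqref{eq:Frob}.
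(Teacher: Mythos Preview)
Your proof is correct. Part (1) matches the paper exactly. For (2) and (3) your route differs slightly from the paper's: you stay entirely on the character side, evaluating the sum directly via the averaging projector for (2) and plugging in $\mathrm{sgn}(\sigma)=(-1)^{n-\ell(\sigma)}$ for (3). The paper instead works on the symmetric function side: for (2) it uses formula \eqref{3} together with $\sfh_n=\sum_\lambda \centz_\lambda^{-1}\sfp_\lambda$ to rewrite $\stan_V(1)$ as $\langle \ch(V),\sfh_n\rangle=\langle V,\mathbbm{1}\rangle$, and for (3) it invokes the $\omega$-duality Lemma~\ref{l:omega} and $\omega\,\ch(V)=\ch(V\otimes\sgn)$ to reduce to (2). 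Your argument is more self-contained and elementary; the paper's has the advantage of reusing the $\omega$-machinery already set up for other purposes.
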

\begin{proof}
	(1) is a direct consequence of the facts that $\ell(\sigma)=n$ if and only if $\sigma$ is the identity $e$ and $\mathrm{Tr}_V(e)=\dim\, V$.
	
	(2) follows from \eqref{3} and the identity $\ch_{\symS_n}(\mathbbm{1})=\sfh_n=\sum_{\lambda \vdash n}\centz_\lambda^{-1}\sfp_\lambda$ where $\mathbbm{1}$ denotes the trivial representation. Indeed,  $$\stan_V(1)=\sum_{\lambda\vdash n}\centz_{\lambda}^{-1}\langle \ch_{\symS_n}(V),\sfp_\lambda\rangle=\langle\ch_{\symS_n}(V),\ch_{\symS_n}(\mathbbm{1})\rangle=\langle V,\mathbbm{1}\rangle=\dim\, V^{\symS_n}.$$ 
	
	(3) follows from (2) and Lemma~\ref{l:omega}, since $\w \ch_{\symS_n}(V)=\ch_{\symS_n}(V\otimes \sgn)$.
\end{proof}

\subsection{First examples} 
In this subsection, we work out examples of the characteristic polynomials of $\symS_n$-modules. 

\begin{example}\cite[Example~I.2.1]{Mac}
\label{ex:trivial}
When $V=\mathbbm{1}$ is the trivial representation, $\ch_{\symS_n}(\mathbbm{1})=\sfh_n$. By \eqref{4}, we have 
\[\stan_{\sfh_n}(m)=\multiset{m}{n} 
:=\binom{n+m-1}{n}=\frac{m(m+1)\cdots(m+n-1)}{n!}\]
since $\sfh_n=\sum_{1\leq i_1\leq \cdots \leq i_n}x_{i_1}\cdots x_{i_n}$.
Similarly, when $V=\sgn$ is the sign representation, $\ch_{\symS_n}(\sgn)=\sfe_n$ and
\[\stan_{\sfe_n}=\binom{m}{n}=\frac{m(m-1)\cdots(m-n+1)}{n!}\]
since $\sfe_n=\sum_{1\leq i_1 < \cdots  < i_n}x_{i_1}\cdots x_{i_n}$.
\end{example}

\begin{example}[Permutation modules] \label{ex:permutation.module} 
The \emph{permutation $\symS_n$-module}  
associated to a partition $\lambda=(\lambda_1,\cdots, \lambda_\ell)$ of $n$ is the induced representation 
\[M^\lambda:=\Ind^{\symS_n}_{\symS_\lambda}\mathbbm{1}\] of the trivial representation of the Young subgroup $\symS_\lambda=\symS_{\lambda_1}\times\cdots\times\symS_{\lambda_\ell}$. 
Its dimension is ${n!}/{\lambda!}$ where $\lambda! =\lambda_1!\cdots \lambda_\ell !$ and we have 
$$\ch_{\symS_n}(M^\lambda)=\sfh_\lambda=\sfh_{\lambda_1}\cdots \sfh_{\lambda_\ell}.$$
Hence by multiplicativity,
\beq\label{eq:permutation.module}\begin{split}
	\stan_{M^\lambda}(m)&=\prod_{i=1}^\ell \multisetBig{m}{\lambda_i} 
	=\frac{1}{\lambda!}m^{\lambda^t_1}(m+1)^{\lambda^t_2}\cdots (m+{\lambda_1}-1)^{\lambda^t_{\lambda_1}}
\end{split}\eeq
where $\lambda^t$
denotes the dual partition of $\lambda$ with parts $\lambda^t_1\geq \cdots \geq \lambda^t_{\lambda_1}>0$. Note that $\lambda^t_1=\ell(\lambda)$ is the length of $\lambda$ and \eqref{eq:permutation.module} is divisible by $m^{\ell(\lambda)}$.
\end{example}

From this, we deduce the following.
\begin{lemma} \label{l:M.mult}
	Let $V$ be an $\symS_n$-representation with 
	$V=\sum_{\lambda\vdash n}b_\lambda M^\lambda$. Then 
	\begin{enumerate}
		\item $b_{(n)}/n$ is equal to the coefficient of $m$ in $\stan_V(m)$ and
		\item $b_{(1^n)}$ is equal to $(-1)^n\cdot \stan_V(-1)=\dim~(V\otimes \sgn)^{\symS_n}$.
	\end{enumerate}
\end{lemma}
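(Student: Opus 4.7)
The plan is to reduce each statement to the explicit formula
$$\stan_{M^\lambda}(m) = \prod_{i=1}^{\ell(\lambda)} \multiset{m}{\lambda_i} = \frac{1}{\lambda!}\,m^{\ell(\lambda)}(m+1)^{\lambda^t_2}\cdots(m+\lambda_1-1)^{\lambda^t_{\lambda_1}}$$
from Example~\ref{ex:permutation.module}, together with linearity of $\stan$, so that $\stan_V(m) = \sum_{\lambda \vdash n} b_\lambda \stan_{M^\lambda}(m)$. Both claims will then come from identifying which partitions $\lambda$ contribute to the relevant piece of $\stan_{M^\lambda}$.

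For part (1), the crucial observation is that each factor $\multiset{m}{\lambda_i} = m(m+1)\cdots(m+\lambda_i-1)/\lambda_i!$ is divisible by $m$, so $\stan_{M^\lambda}(m)$ is divisible by $m^{\ell(\lambda)}$. Hence only partitions with $\ell(\lambda) = 1$, namely $\lambda = (n)$, can contribute a nonzero coefficient of $m^1$. A direct computation gives that the coefficient of $m$ in $\multiset{m}{n} = m(m+1)\cdots(m+n-1)/n!$ equals $(n-1)!/n! = 1/n$, and so the coefficient of $m$ in $\stan_V(m)$ is precisely $b_{(n)}/n$.

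For the first equality in part (2), I would evaluate $\stan_{M^\lambda}(-1) = \prod_i \multiset{-1}{\lambda_i}$ term by term. For any $\lambda_i \geq 2$, the numerator $(-1)(0)(1)\cdots(\lambda_i - 2)$ contains a zero, so $\multiset{-1}{\lambda_i} = 0$. Hence only $\lambda = (1^n)$ survives, and since $\multiset{-1}{1} = -1$, we get $\stan_{M^{(1^n)}}(-1) = (-1)^n$, yielding $(-1)^n \stan_V(-1) = b_{(1^n)}$. For the second equality $b_{(1^n)} = \dim(V \otimes \sgn)^{\symS_n}$, I would invoke Lemma~\ref{l:omega} to write $(-1)^n \stan_V(-1) = \stan_{\omega \ch(V)}(1)$, use the standard identity $\omega\,\ch_{\symS_n}(V) = \ch_{\symS_n}(V \otimes \sgn)$, and then apply Lemma~\ref{l:mult}(2) to conclude $\stan_{V\otimes \sgn}(1) = \dim(V \otimes \sgn)^{\symS_n}$.

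There is no real obstacle here; the proof is a direct verification once one notices the divisibility pattern of $\multiset{m}{k}$ near $m = 0$ and the vanishing at $m = -1$ for $k \geq 2$. The only mild subtlety is keeping track of the duality $\omega \ch(V) = \ch(V \otimes \sgn)$ to obtain the representation-theoretic interpretation of $b_{(1^n)}$, but this follows at once from $\omega(\sfp_\lambda) = \sgn(\lambda)\,\sfp_\lambda$ combined with the definition of the Frobenius characteristic.
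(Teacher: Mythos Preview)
Your proof is correct and follows essentially the same approach as the paper: both reduce to the formula from Example~\ref{ex:permutation.module}, observe that $\stan_{M^\lambda}(m)$ is divisible by $m^{\ell(\lambda)}$ so only $\lambda=(n)$ contributes to the linear term, and evaluate at $m=-1$ to see that only $\lambda=(1^n)$ survives. The paper leaves the second equality in (2) implicit (it is Lemma~\ref{l:mult}(3)), whereas you spell it out via Lemma~\ref{l:omega} and Lemma~\ref{l:mult}(2), which amounts to the same thing.
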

\begin{proof} By \eqref{eq:permutation.module}, we find that, for $\lambda=(\lambda_1,\cdots,\lambda_\ell)$,
	\begin{enumerate}
		\item[$(1')$] $\stan_{M^\lambda}(m)$ is divisible by $m^{\ell}$ and the coefficient of $m^\ell$ is ${1}/{(\lambda_1\cdots \lambda_n)}$;
		\item[$(2')$] $\stan_{M^\lambda}(-1)\neq 0$ if and only if $\lambda=(1^n)$ and $\stan_{M^{(1^n)}}(-1)=(-1)^n$.
	\end{enumerate}	
	The statements (1) and (2) follow immediately from $(1')$ and $(2')$.
\end{proof}

\begin{example}[Permutation representations]\label{ex:chi.U_H}
	A \emph{permutation representation} of $\symS_n$ is a representation of $\symS_n$ that admits an $\symS_n$-invariant basis. It is called \emph{transitive} if an (or equivalently any) $\symS_n$-invariant basis consists of a single orbit. Every permutation representation of $\symS_n$ decomposes as a direct sum of transitive ones and every transitive permutation representation is the induced representation  
	\beq\label{y16} U_H:=\Ind^{\symS_n}_H\mathbbm{1}\eeq 
	of the trivial representation of some subgroup $H$ of $\symS_n$. Moreover, any $\symS_n$-invariant basis of $U_H$ is isomorphic to $\symS_n/H$ as an $\symS_n$-set. In particular,
	\[\begin{split}
		\ch_{\symS_n}(U_H)&=\frac{1}{n!}\sum_{\sigma \in \symS_n}\left\lvert(\symS_n/H)^\sigma\right\rvert \cdot \sfp_{\sigma}\\
		&=\frac{1}{n!}\sum_{\sigma \in \symS_n}\left\lvert\{\tau H\in \symS_n/H:\sigma \in \tau H\tau^{-1}\}\right\rvert\cdot  \sfp_{\sigma}\\
		&=\frac{1}{n!}\sum_{\tau H \in \symS_n/H}\sum_{\sigma \in \tau H \tau^{-1}}\sfp_{\sigma}\\
		&=\frac{1}{n!}\sum_{\tau H \in \symS_n/H}\sum_{\sigma' \in  H }\sfp_{\sigma'}=\frac{1}{\lvert H \rvert}\sum_{\sigma' \in H}\sfp_{\sigma'}
	\end{split}
	\]
	where the first equality holds since the $\sigma$ acts on $U_H$ as a permutation on the basis $\symS_n/H$ and the second equality holds since $\sigma \tau H=\tau H$ if and only if $\sigma \in \tau H\tau^{-1}$. The fourth equality follows from setting $\sigma=\tau \sigma'\tau^{-1}$, since $\sfp_\sigma=\sfp_{\sigma'}$, while the third and the last equalities are immediate. 

	Consequently, the characteristic polynomial of $U_H$ is 
	\beq\label{eq:chi.U_H}\stan_{U_H}(m)=\frac{1}{\lvert H\rvert}\sum_{\sigma\in H}m^{\ell(\sigma)}.\eeq
	In other words, the coefficient of $m^\ell$ in $\stan_{U_H}(m)$ is the number of permutations contained in $H$ that have precisely $\ell$ orbits in $[n]:=\{1,\cdots, n\}$, divided by $\lvert H\rvert$. 
	
	We note that for a positive integer $m_0$,  the value $\stan_{U_H}(m_0)$ is equal to the number of ways to color the elements in $[n]$ using $m_0$ colors, where two colorings are considered equivalent if they belong to the same $H$-orbit under the natural $H$-action on $[n]$ (see \cite[Corollary~7.24.6]{Stabook}).
\end{example}
\begin{remark}
	For a permutation representation $V$ 
	of $\symS_n$, $\stan_V$ always has nonnegative coefficients. 
    On the other hand, 
    it may have internal zeros in its coefficients. For example, for $H=\langle (1,2,3)\rangle \subset \symS_3$,  $\stan_{U_H}=\frac{1}{3}(m^3 +m)$.   
\end{remark}

\begin{example}[Specht module]\label{ex:chi.schur} 
Let $\lambda=(\lambda_1,\cdots,\lambda_\ell)$ be a partition of $n$ with $\lambda_1\geq \cdots \geq \lambda_\ell>0$. Then for $m\geq \ell$,
	\[\stan_{\sfs_\lambda}(m)=\prod_{1\leq i<j\leq m}\left(1+\frac{\lambda_i-\lambda_j}{j-i}\right).\]
	In particular, $\stan_{\sfs_\lambda}(n)$ is equal to the dimension of the irreducible $GL_n$-representation $V_\lambda$ with the highest weight $\lambda$, by the Weyl dimension formula. See \cite[(7.106) or (A.2.4)]{Stabook}.
\end{example}

\begin{example}
\label{ex:chi.length.two}
	Suppose that an $\symS_n$-representation $V$ is spanned by $M^\lambda$ with $\ell(\lambda)\leq 2$. 
	In this case, the representation $V$ is completely determined by its characteristic polynomial $\stan_V(m)$. To see this, write 
\[V=\sum_{0\leq j \leq \frac{n}{2}}c_jM^{(n-j,j)}, \quad c_j\in \ZZ\] 
so that 
$$\stan_V(m)=\sum_{1\leq j\leq \frac{n}{2}}c_j \stan_{\sfh_{(n-j,j)}}(m), \quad\text{where }~ 
\stan_{\sfh_{(n-j,j)}}=\multisetBig{m}{n-j}\multisetBig{m}{j}.$$
Letting $r=\lfloor \frac{n}{2}\rfloor$, we have 
\[B_{ij}:=\stan_{\sfh_{(n-j,j)}}(i-n)=\begin{cases}
	0 &\text{ if }i>j\\
	(-1)^n\binom{n-i}{n-j}\binom{n-i}{j} & \text{ if }i\leq j
\end{cases}\]
for $0\leq i\leq r$,
since $\multisetbody{i-n}{n-j}=(-1)^{n-j}\binom{n-i}{n-j}$ and $\multisetbody{i-n}{j}=(-1)^j\binom{n-i}{j}$.
The upper triangular matrix $B=(B_{ij})_{0\leq i,j\leq r}$ has nonzero determinant $(-1)^{n(r+1)}\prod_{i=0}^r \binom{n-i}{i}\neq 0$. Since $\stan_V(i-n)=\sum_{j=0}^r B_{ij}c_j$ for every $0\leq i\leq r$, $c_j$ are determined by the matrix multiplication
\[(c_0,\cdots, c_r)^t=B^{-1}\cdot (\stan_V(-n),\cdots, \stan_V(r-n))^t. \]
\end{example}

\bigskip

\section{Characteristic polynomials of varieties and log-concavity}\label{s:examples}
In this section, we present examples derived from geometry. Let $Z$ be a smooth projective pure-dimensional variety with no odd degree cohomology, i.e. $H^{2k+1}(Z)=0$ for every $k$. Suppose that $Z$ is equipped with an $\symS_n$-action, or more generally, $H^{2k}(Z)$ is equipped with an $\symS_n$-action for every $k$. We denote the characteristic polynomial of the induced graded $\symS_n$-module $H^*(Z)$ by
\[\stan_Z(m,t)=
\sum_{k\geq0}\stan_{\ch_{\symS_n}H^{2k}(Z)}(m)t^k.\]
We call this \emph{the characteristic polynomial of $Z$}.

Note that $\stan_Z(m,1)$ is the characteristic polynomial of the ungraded $\symS_n$-representation $H(Z)=\bigoplus_kH^{2k}(Z)\in R_n$, while $\stan_{Z}(1,t)$ is the Poincar\'e polynomial of the invariant part $H^*(Z)^{\symS_n}$,
which is isomorphic to $H^*(Z/\symS_n)$ when there is an $\symS_n$-action on $Z$.

\subsection{$n$-fold products} \label{S4.1}
Let $X$ be a smooth projective variety with no odd cohomology.
Consider the $\symS_n$-action on the $n$-fold product $X^n=X\times \cdots\times X$ of $X$ given by permuting the product components. This can be viewed as the space of
ordered $n$ points in $X$ acted on by an $\symS_n$-action reordering them.

We compute $\stan_{X^n}(m,t)$ and explore the four types of log-concavities introduced in Definition~\ref{def:lc}. 
We begin with an example where $X=\PP^1$. 

\begin{example}\label{ex:nfold.P1}
	Let $X=\PP^1$. Observe that \[\sum_{k\geq0}\ch_{\symS_n}H^{2k}(X^n)t^k=\sfh_n\circ (\sfh_1+\sfh_1t)=\sum_{k=0}^n \sfh_{n-k}\sfh_k t^k .\]
	Applying the Stanley map $\stan$, we obtain
	\beq\label{eq:P1a}\stan_{(\PP^1)^n}(m,t)=\sum_{k=0}^n\multisetBig{m}{n-k}\multisetBig{m}{k}t^k.\eeq
	In particular, when evaluated at $t=1$ or $m=1$, one gets
	\beq\label{eq:P1b}\begin{split}
		&\stan_{(\PP^1)^n}(m,1)=\sum_{k=0}^n\multisetBig{m}{n-k}\multisetBig{m}{k}=\multisetBig{2m}{n};\\
		&\stan_{(\PP^1)^n}(1,t)=\sum_{k=0}^nt^k
	\end{split}
	\eeq
	respectively. By \eqref{eq:P1a} and \eqref{eq:P1b}, we find that 
	\begin{itemize}
		\item $\stan_{(\PP^1)^n}(m,t)$ is length log-concave in degree $k$ for any $0\leq k\leq n$;
		\item $\stan_{(\PP^1)^n}(m,t)$ is length log-concave at $t=1$;
		\item $\stan_{(\PP^1)^n}(m,t)$ is degree log-concave at $m=1$,
	\end{itemize}	
	respectively, where the first and the second statements follow by Lemma~\ref{lem:lc.product} or \ref{lem:realrooted}.
	However, $\stan_{(\PP^1)^n}(m,t)$ is not degree log-concave in a given length in general. For example, when $n=4$, 
   \begin{align*}
       \stan_{(\PP^1)^4}(m,t)&=\frac14(t^4+1)m + \frac{1}{24}(11t^4+8t^3+6t^2+8t+11)m^2 \\ &+ \frac{1}{4}(1+2t+2t^2+2t^3+t^4)m^3 + \frac{1}{24}(t^4+4t^3+6t^2+4t+1)m^4,
   \end{align*}
   which is not degree log-concave in length 2. 
   
	On the other hand, interestingly, it seems to hold that 
	\begin{itemize}
		\item $\stan_{(\PP^1)^n}(m,t)$ is degree log-concave in length $\ell$ whenever the inequalities $\lceil \log_2(n\log_2n)\rceil \leq \ell\leq n$ are satisfied. 
	\end{itemize}
	We have checked this for $n\leq 200$ by numerical computation. We also note that the bound $\lceil \log_2(n\log_2n)\rceil$ is not sharp.
\end{example}

For general $X$, we can explicitly calculate $\stan_{X^n}(m,t)\in \Q[m,t]$, 
and study the log-concavity.
Let $p_X=\sum_{k=0}^d b_kt^k$ denote the Poincar\'e polynomial of $X$, where $d=\dim \,X$ and $b_k=\dim\, H^{2k}(X)$.

\begin{theorem}\label{thm:prod.gr.gen}
	We set $X^0$ to be a point. Then, 
	\beq \label{eq:genprod}\sum_{n\ge 0}\sum_{k\geq0}\ch_{\symS_n}H^{2k}(X^n)t^k=\Exp(p_X\sfh_1).\eeq
	Consequently, the generating series of $\stan_{X^n}(m,t)$ is
	\beq\label{eq:gen.series.nfold} 
    \sum_{n\geq 0}\stan_{X^n}(m,t)q^n
	=\prod_{i=0}^d\left(1-qt^i\right)^{-b_im}.\eeq
\end{theorem}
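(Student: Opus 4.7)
The plan is to use Künneth's formula together with a direct graded character computation on cyclic permutations to identify the generating series as a plethystic exponential, and then to transfer this identity through the Stanley map.

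First, by Künneth, $H^*(X^n) \cong H^*(X)^{\otimes n}$ as a graded $\symS_n$-module, with $\symS_n$ permuting tensor factors. For $\sigma \in \symS_n$ with $m_i(\sigma)$ cycles of length $i$, the trace of $\sigma$ on $H^*(X)^{\otimes n}$ weighted by the cohomological grading $t^k$ factors over the cycles of $\sigma$. The key observation is that a cyclic permutation of length $r$ acting on $H^*(X)^{\otimes r}$ fixes exactly those tensors with all entries equal to a common basis vector of some $H^{2k}(X)$, which contributes $b_k t^{rk}$; thus the graded trace on $H^*(X)^{\otimes r}$ equals $p_X(t^r)$. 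Consequently, using $\sfp_\sigma=\prod_i\sfp_i^{m_i(\sigma)}$ and collecting by cycle type $\lambda\vdash n$,
\[
\sum_{n\geq 0}\sum_{k\geq 0}\ch_{\symS_n}H^{2k}(X^n)\,t^k
= \sum_{\lambda}\centz_\lambda^{-1}\prod_{i\geq 1}\bigl(p_X(t^i)\sfp_i\bigr)^{m_i(\lambda)}
= \exp\sum_{r\geq 1}\frac{1}{r}\,p_X(t^r)\,\sfp_r.
\]
Property $(1')$ of plethysm gives $\sfp_r\circ(p_X(t)\sfh_1)=p_X(t^r)\sfp_r$, so the right-hand side coincides with $\Exp(p_X\sfh_1)$ by definition \eqref{y13}, proving \eqref{eq:genprod}.

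For \eqref{eq:gen.series.nfold}, I would apply the $\QQ$-algebra homomorphism $\hstan$ of Theorem~\ref{thm:characterize.hatchi} to both sides. Using $\hstan(\sfp_r)=mq^r$ together with \eqref{y11a}, one checks that $\hstan(\sfp_r\circ F)=\hstan(F)^{[r]}$ for every $F\in\Lambda\llbracket t\rrbracket$, so $\hstan$ intertwines the plethystic exponential on $\Lambda\llbracket t\rrbracket$ with the plethystic exponential on $\QQ[m]\llbracket q,t\rrbracket$ defined in \eqref{y14}; that is, $\hstan\circ \Exp=\Exp\circ \hstan$. Since $\hstan(p_X(t)\sfh_1)=p_X(t)\,mq=\sum_{k=0}^d b_k\,m q\,t^k$, the multiplicativity of $\Exp$ (on sums with no constant term) together with Example~\ref{ex:Exp.monomial} gives
\[
\sum_{n\geq 0}\stan_{X^n}(m,t)\,q^n
= \hstan\bigl(\Exp(p_X\sfh_1)\bigr)
= \prod_{k=0}^{d}\bigl(1-qt^k\bigr)^{-b_k m},
\]
establishing \eqref{eq:gen.series.nfold}.

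The main obstacle is keeping the bookkeeping straight across three gradings at once (the $\symS_n$-grading tracked by $q^n$, the cohomological grading tracked by $t^k$, and the length grading by $m^{\ell(\lambda)}$ that appears after applying $\hstan$), and in particular confirming that the graded trace of a cyclic permutation on $H^*(X)^{\otimes r}$ is $p_X(t^r)$ rather than $p_X(t)^r$; this is the geometric input that makes the plethysm identity $\sfp_r\circ(p_X\sfh_1)=p_X(t^r)\sfp_r$ match the character calculation and drives the whole formula.
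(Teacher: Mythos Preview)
Your proof is correct and follows essentially the same two-step strategy as the paper: identify the generating series as $\Exp(p_X\sfh_1)$, then push through $\hstan$ and invoke Example~\ref{ex:Exp.monomial}. The only real difference is in the first step. The paper quotes the standard plethysm fact $\sum_{k}\ch_{\symS_n}H^{2k}(X^n)t^k=\sfh_n\circ(p_X\sfh_1)$ in one line (``K\"unneth formula and the property of the plethysm by $\sfh_n$''), whereas you unpack this by computing the graded trace of each cycle directly and reassembling via $\exp\sum_{r\geq 1}\frac{1}{r}p_X(t^r)\sfp_r$. Your route is more self-contained and makes explicit why $p_X(t^r)$ (not $p_X(t)^r$) appears, which is exactly the content of the plethysm-by-$\sfh_n$ property in this graded setting; the paper's route is shorter but assumes the reader knows that property. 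For the second step, the paper factors out $m$ to reach $(\Exp(p_Xq))^m$ before specializing, while you split $p_X(t)mq$ as a sum of monomials and apply Example~\ref{ex:Exp.monomial} termwise; these are equivalent rearrangements of the same computation.
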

\begin{proof}
	By the K\"unneth formula and the property of the plethysm by $\sfh_n$, we have
	\beq\label{eq:hncirc}
		\sum_{k\ge 0} \ch_{\symS_n}H^{2k}(X^n)t^k=\sfh_n\circ \left(\sum_{k \ge 0} \ch_{\symS_1}H^{2k}(X)t^k \right)=\sfh_n\circ (p_X\sfh_1).
	\eeq
    Hence, we obtain \eqref{eq:genprod} by the definition of plethystic exponential.

The formula \eqref{eq:gen.series.nfold} follows from \eqref{eq:genprod}, by applying $\widehat\stan_{(-)}$ since 
	\[\begin{split}
		\widehat \stan_{\Exp(p_X \sfh_1)}&=\exp\sum_{r\geq1}\frac{1}{r}\sfp_r\circ \widehat\stan(p_X\sfh_1)=\exp m\sum_{r\geq 1}\frac{1}{r}\sfp_r\circ(p_Xq)=\left(\Exp(p_Xq)\right)^m
	\end{split}
	\]
	which is equal to $\prod_{i=0}^d(1-qt^i)^{-b_im}$ by Example~\ref{ex:Exp.monomial}.
\end{proof}

\begin{corollary} \label{cor:nfold.formula} Let $X$ be as above.
  \begin{enumerate}
    \item	As an element of $\Q[m][t]$, $\stan_{X^n}(m,t)$ is written as
	\beq \label{eq:nfold.coeff}\stan_{X^n}(m,t)=\sum_{n_0+\cdots+n_d=n}\multiset{b_0m}{n_0}\cdots \multiset{b_dm}{n_d}t^{ 1\cdot n_1+\cdots +d\cdot n_d}.\eeq
    
    \item When evaluated at $t=1$, we have
	\beq\label{eq:nfold.t1}\stan_{X^n}(m,1)=\multisetBig{Nm}{n}=\frac{1}{n!}Nm(Nm+1)\cdots (Nm+n-1),\eeq
    where $N=\sum_{k=0}^{d}b_k$.
    
    \item When evaluated at $m=1$, we have  
	\beq\label{eq:nfold.inv}\stan_{X^n}(1,t)=\sum_{n_0+\cdots+n_d=n}\multiset{b_0}{n_0}\cdots \multiset{b_d}{n_d}t^{1\cdot n_1+\cdots +d\cdot n_d}.\eeq
    
    \item As an element of $\Q[t][m]$, $\stan_{X^n}(m,t)$ is written as
	\beq\label{eq:nfold.coeffm}
    \stan_{X^n}(m,t)=\sum_{\lambda\vdash n}\centz_\lambda^{-1}p_X^{[\lambda]}m^{\ell(\lambda)}\eeq
    where $p_X^{[\lambda]}
    =\prod_{i=1}^{\ell(\lambda)}p_X(t^{\lambda_i})$ for a partition $\lambda=(\lambda_1,\ldots,\lambda_{\ell(\lambda)})$ of $n$.
    \end{enumerate}
\end{corollary}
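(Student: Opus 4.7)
The plan is to derive all four formulas directly from Theorem~\ref{thm:prod.gr.gen}, which supplies both the closed-form product \eqref{eq:gen.series.nfold} for the generating series of $\stan_{X^n}(m,t)$ and, via \eqref{eq:hncirc}, its expression as a plethysm. Parts (1)--(3) are read off from the product formula, and part (4) is extracted from the plethystic formula.

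For part (1), I would expand each factor of $\prod_{i=0}^d (1-qt^i)^{-b_i m}$ as a binomial series
\[
(1-qt^i)^{-b_i m}=\sum_{n_i\ge 0}\multiset{b_i m}{n_i} q^{n_i} t^{i n_i},
\]
multiply the $d+1$ series, and collect the coefficient of $q^n$; this yields \eqref{eq:nfold.coeff} directly. For part (2), setting $t=1$ in \eqref{eq:gen.series.nfold} collapses the product to $(1-q)^{-Nm}$ with $N=\sum_i b_i$, whose binomial expansion is $\sum_n \multiset{Nm}{n} q^n$. Equivalently, one can sum the formula from (1) at $t=1$ using the Vandermonde--Chu identity for multiset coefficients; either route gives \eqref{eq:nfold.t1}. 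Part (3) is the $m=1$ specialization of the formula from part (1), read off from the same expansion.

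For part (4), I would start from the identity
\[
\sum_{k\geq 0}\ch_{\symS_n}H^{2k}(X^n)t^k=\sfh_n\circ (p_X\sfh_1)
\]
established in \eqref{eq:hncirc}, and expand $\sfh_n$ in the power-sum basis using \eqref{eq:h.p.expansion}, namely $\sfh_n=\sum_{\lambda\vdash n}\centz_\lambda^{-1}\sfp_\lambda$. By multiplicativity of plethysm and the defining property $(1')$, which replaces $t$ by $t^r$ and $x_i$ by $x_i^r$, one computes
\[
\sfp_r\circ (p_X\sfh_1) = p_X(t^r)\,\sfp_r,
\qquad\text{so}\qquad
\sfp_\lambda\circ (p_X\sfh_1)=p_X^{[\lambda]}\,\sfp_\lambda
\]
for any partition $\lambda$. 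Applying the Stanley map and using $\stan_{\sfp_\lambda}(m)=m^{\ell(\lambda)}$ (which is immediate from Definition~\ref{def:charpoly}), one obtains \eqref{eq:nfold.coeffm}.

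Each calculation is essentially routine once Theorem~\ref{thm:prod.gr.gen} is in hand. The only step that requires a bit of care is the plethystic computation in part (4): one must remember that $t$ is a formal variable external to the symmetric-function alphabet $\{x_i\}$, so that $\sfp_r$ acts on $p_X(t)\sfh_1$ by sending $t\mapsto t^r$ and $\sfh_1\mapsto \sfp_r$ simultaneously, producing the factor $p_X(t^r)$ and not $p_X(t)$.
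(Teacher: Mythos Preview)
Your proposal is correct and follows essentially the same approach as the paper: parts (1)--(3) are read off from the product formula \eqref{eq:gen.series.nfold} by extracting the coefficient of $q^n$, and part (4) is obtained by expanding $\sfh_n$ in power sums in \eqref{eq:hncirc} and applying $\stan$. Your write-up simply spells out the binomial expansions and the plethystic step in slightly more detail than the paper's terse proof.
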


\begin{proof}
    (1), (2) and (3) are immediate from \eqref{eq:gen.series.nfold} by reading the coefficient of $q^n$.
    For (4), observe that from \eqref{eq:hncirc},
	\[\sum_{k\ge 0} \ch_{\symS_n}H^{2k}(X^n)t^k=\sfh_n\circ (p_X\sfh_1)=\sum_{\lambda\vdash n}\centz_\lambda^{-1}\sfp_\lambda \circ(p_X\sfh_1)=\sum_{\lambda\vdash n}\centz_\lambda^{-1}p_X^{[\lambda]}\sfp_\lambda.\]
	Applying $\stan$ to this, \eqref{eq:nfold.coeffm} follows.
\end{proof}

By Corollary~\ref{cor:nfold.formula}, we obtain the following straightforward log-concavity results. 
\begin{corollary}
    \begin{enumerate}
        \item $\stan_{X^n}(m,t)$ is length log-concave at $t=1$. 
        \item $\stan_{X^n}(m,t)$ is degree log-concave in length $n$, provided that $p_X$ is log-concave with no internal zeros. 
        \item The coefficient of $m$ in $\stan_{X^n}(m,t)$ has internal zeros whenever $n>1$.
    \end{enumerate}
\end{corollary}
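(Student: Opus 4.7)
My plan is to read off each assertion directly from Corollary~\ref{cor:nfold.formula}, by identifying the relevant slice of $\stan_{X^n}(m,t)$ and invoking a log-concavity lemma from Section~\ref{S2}.

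For part (1), I would use formula \eqref{eq:nfold.t1}, which writes
\[
\stan_{X^n}(m,1)\;=\;\multisetBig{Nm}{n}\;=\;\frac{1}{n!}\prod_{i=0}^{n-1}(Nm+i).
\]
This polynomial in $m$ splits into real linear factors with roots $0,-1/N,\ldots,-(n-1)/N$, so Lemma~\ref{lem:realrooted} gives that it is log-concave, which is precisely length log-concavity at $t=1$.

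For part (2), the unique partition of $n$ of length $n$ is $(1^n)$, for which $\centz_{(1^n)}=n!$. Hence \eqref{eq:nfold.coeffm} identifies the coefficient of $m^n$ in $\stan_{X^n}(m,t)$ as $\tfrac{1}{n!}\,p_X(t)^n$. Under the hypothesis that $p_X\in\RR_{\ge 0}[t]$ is log-concave with no internal zeros, an inductive application of Lemma~\ref{lem:lc.product} shows that $p_X(t)^n$ is log-concave (and still has no internal zeros), and so is its positive scalar multiple. This yields degree log-concavity in length $n$.

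For part (3), the unique partition of $n$ of length $1$ is $(n)$, for which $\centz_{(n)}=n$. So \eqref{eq:nfold.coeffm} identifies the coefficient of $m$ in $\stan_{X^n}(m,t)$ as
\[
\frac{1}{n}\,p_X(t^n)\;=\;\frac{1}{n}\sum_{k=0}^{d}b_k\,t^{nk}.
\]
The nonzero terms of this polynomial in $t$ occur only in degrees divisible by $n$, starting from $b_0\neq 0$ in degree $0$ and reaching $b_d\neq 0$ in degree $nd\ge 2$ when $d\ge 1$ and $n>1$. The intermediate degree $t^1$ therefore lies strictly between two nonzero terms but has coefficient $0$, producing an internal zero. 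No serious obstacle arises: once the right coefficient is extracted from \eqref{eq:nfold.coeffm} or \eqref{eq:nfold.t1}, each conclusion is one application of a standard log-concavity fact. The only subtlety worth flagging is that (3) implicitly requires $\dim X\ge 1$, since otherwise $p_X$ is a constant and the coefficient of $m$ has no room for internal zeros.
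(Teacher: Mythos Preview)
Your proof is correct and follows essentially the same approach as the paper: each part is read off from Corollary~\ref{cor:nfold.formula} (specifically \eqref{eq:nfold.t1} for (1) and \eqref{eq:nfold.coeffm} for (2) and (3)), with Lemma~\ref{lem:realrooted} and Lemma~\ref{lem:lc.product} supplying the log-concavity. Your remark that (3) implicitly assumes $\dim X\ge 1$ is a valid observation that the paper leaves unstated.
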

\begin{proof}
    (1) follows from \eqref{eq:nfold.t1} and Lemma~\ref{lem:realrooted}. For (2), by \eqref{eq:nfold.coeffm}, the coefficient of $m^n$ in $\stan_{X^n}(m,t)$ is $\frac{1}{n!}(p_X(t))^n$. This is log-concave with no internal zeroes when $p_X$ is. 
    For (3), by \eqref{eq:nfold.coeffm}, the coefficient of $m$ in $\stan_{X^n}$ is $\frac{1}{n}p_X(t^n)=\frac{1}{n}\sum_{i=0}^d b_it^{ni}$.
\end{proof}

Note that there are examples of $X$ with non-log-concave Poincar\'e polynomial. For example, when $X=\mathrm{Gr}(2,4)$, $p_X(t)=1+t+2t^2+t^3+t^4$ is not log-concave.

\begin{example}\label{ex:nfold.proj.sp}
	When $X=\PP^d$, \eqref{eq:nfold.inv} reads
	\beq \label{eq:nfold.proj.sp}\begin{split}
		\stan_{X^n}(1,t)&=\sum_{\substack{n_0+\cdots+n_d=n}}t^{1\cdot n_1+\cdots +d\cdot n_d}=\sum_{\substack{\lambda\in \mathrm{Par},\\ \ell(\lambda)\leq n,\lambda_1\leq d}}t^{\lvert \lambda\rvert}={n+d \brack n}_t
	\end{split}\eeq
	where $\lambda$ runs over all the partitions with length at most $n$ and all parts at most $d$ and 
	for $a\geq b\geq0$, 
	 \[{a \brack b}_t:=\frac{[a]_t!}{[a-b]_t![b]_t!}~\in \Z[t]\]
	 with $[a]_t!=\prod_{i=1}^a[i]_t$ and $[a]_t=\sum_{i=0}^{a-1}t^i$ for all $a\geq0$. 
      The second equality in \eqref{eq:nfold.proj.sp} holds by identifying the $(d+1)$-tuple $(n_0,\cdots, n_d)$ with the partition $\lambda=1^{n_1}\cdots d^{n_d}$ and the last equality 
      holds by \cite[Proposition~1.7.3]{Stabook1}.
	 
	 Unlike the case of $X=\PP^1$, this is not log-concave in general. For example, when $n=d=2$, one gets $\stan_{(\PP^2)^2}(1,t)={4\brack 2}_t=1+t+2t^2+t^3+t^4$. In other words, $\stan_{X^n}(m,t)$ is not degree log-concave at $m=1$ in general. 
\end{example}

Recall that  when $X=\PP^1$, we verified in Example~\ref{ex:nfold.P1} that $\stan_{X^n}(m,t)$ 
is degree log-concave in length $\lceil \log_2(n\log_2n)\rceil\leq \ell\leq n$ for $n\leq 200$, while it fails to be degree log-concave in smaller lengths. Our goal is to investigate the degree log-concavity of $\stan_{X^n}(m,t)$ in sufficiently large length. Additionally, we aim to examine the length log-concavity of $\stan_{X^n}(m,t)$ at a given degree.

\begin{question} Let $\stan_{X^n,k}^\ell\in \Q$ denote the coefficient of $m^\ell t^k$ in $\stan_{X^n}(m,t)$ for each $\ell,k$. 

	(1) Is it true that 
	\[(\stan_{X^n,k}^\ell)^2\geq \stan_{X^n,k}^{\ell+1}\cdot\stan_{X^n,k}^{\ell-1} \quad \text{ for any $\ell,k$}?\]
	
	(2) Assume that $p_X$ is log-concave with no internal zeros. Is there a lower bound for $\ell$ as a function of $n$ such that 
	\[(\stan_{X^n,k}^\ell)^2\geq \stan_{X^n,k+1}^{\ell}\cdot\stan_{X^n,k-1}^{\ell} \] 
    holds for all $k$?
\end{question}
Here we give a partial asymptotic answer to these questions. Indeed, both inequalities hold when $k$ is sufficiently small and $\ell$ is sufficiently large with respect to $n$.
\begin{proposition}
	Let $j,k\geq 0$. The coefficient of $m^{n-j}t^k$ in $\stan_{X^n}(m,t)$ is
	\[
    \stan_{X^n,k}^{n-j}
    =\frac{b_0^{n-k-j}b_1^k}{2^jj!k!}\cdot\frac{n^{2j+k}}{n!}+o\Big(\frac{n^{2j+k}}{n!}\Big).\]
	In particular, these coefficients satisfy the log-concavities
	\[\begin{split}
		&
        \left(\stan_{X^n,k}^{n-j}\right)^2
        \geq \stan_{X^n,k-1}^{n-j}\cdot\stan_{X^n,k+1}^{n-j} \and
        \left(\stan_{X^n,k}^{n-j}\right)^2\geq \stan_{X^n,k}^{n-j-1}\cdot \stan_{X^n,k}^{n-j+1}
	\end{split}\]
	for sufficiently large $n$. 
\end{proposition}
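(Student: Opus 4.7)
The plan is to extract the leading asymptotics directly from the closed formula \eqref{eq:nfold.coeffm}, namely
\[
\stan_{X^n}(m,t)=\sum_{\lambda\vdash n}\centz_\lambda^{-1}\,p_X^{[\lambda]}\,m^{\ell(\lambda)},
\]
by isolating those partitions $\lambda\vdash n$ with $\ell(\lambda)=n-j$. For $n\geq 2j$, subtracting $1$ from each part gives a bijection between $\{\lambda\vdash n : \ell(\lambda)=n-j\}$ and $\{\mu\vdash j\}$; under this bijection $\lambda$ consists of $n-j-\ell(\mu)$ ones together with the parts $\mu_i+1$. A short bookkeeping computation then yields
\[
\centz_\lambda=(n-j-\ell(\mu))!\cdot w(\mu),\qquad w(\mu):=\prod_{c\geq 1}(c+1)^{m_c(\mu)}m_c(\mu)!,
\]
and $p_X^{[\lambda]}=p_X(t)^{n-j-\ell(\mu)}\prod_{i}p_X(t^{\mu_i+1})$. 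Substituting gives
\[
\stan^{n-j}_{X^n}(t)=\sum_{\mu\vdash j}\frac{p_X(t)^{n-j-\ell(\mu)}\prod_i p_X(t^{\mu_i+1})}{(n-j-\ell(\mu))!\,w(\mu)}.
\]

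Next I would extract $[t^k]$ and identify the dominant contribution as $n\to\infty$. The standard multinomial estimate reads
\[
[t^k]p_X(t)^N=\binom{N}{k}b_0^{N-k}b_1^k+O(N^{k-1}),
\]
where the error is uniform in $N$ for fixed $k$ (since the competing multinomials $\binom{N}{N-s,n_1,\dots}$ with $s<k$ are polynomials in $N$ of degree $s$). Combined with $\prod_i p_X(t^{\mu_i+1})=b_0^{\ell(\mu)}+O(t)$ and $(n-j-\ell(\mu))!\sim n!/n^{j+\ell(\mu)}$, the contribution of $\mu$ to $\stan^{n-j}_{X^n,k}$ has leading order
\[
\frac{b_0^{n-j-k}b_1^k}{w(\mu)\,k!}\cdot\frac{n^{j+\ell(\mu)+k}}{n!}.
\]
The maximum of $\ell(\mu)$ over $\mu\vdash j$ is $j$, attained uniquely at $\mu=(1^j)$, where $w(\mu)=2^j j!$. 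All other partitions and all sub-leading $t$-extraction terms contribute $O(n^{2j+k-1}/n!)$. This yields the claimed asymptotic formula.

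The log-concavity statements are then immediate algebra on leading coefficients. For the first, the ratio of $(\stan^{n-j}_{X^n,k})^2$ to $\stan^{n-j}_{X^n,k-1}\stan^{n-j}_{X^n,k+1}$ has the same power $n^{4j+2k}/(n!)^2$ on both sides, with leading-constant ratio $(k-1)!(k+1)!/(k!)^2=(k+1)/k>1$, so strict inequality holds for all sufficiently large $n$. For the second, the same matching of powers occurs and the constant ratio is $(j+1)!(j-1)!/(j!)^2=(j+1)/j>1$. The main obstacle is purely a matter of bookkeeping: one must check that the $O(n^{2j+k-1}/n!)$ errors arising both from non-dominant $\mu$ and from sub-leading coefficient extractions can be controlled uniformly, so that the strict inequality among leading constants propagates to the full coefficients. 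This is routine but requires care; in particular one should assume $b_1\geq 1$ (which is harmless for the asymptotic statement, the formula reading $0+o(\cdot)$ otherwise) in order for the log-concavity conclusions to be non-vacuous.
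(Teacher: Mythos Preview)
Your argument is correct and complete. It differs from the paper's proof in its starting point: the paper extracts the coefficient of $m^{n-j}t^k$ from formula \eqref{eq:nfold.coeff} (the product expansion $\prod_i(1-qt^i)^{-b_im}$), writes each $\multiset{b_im}{n_i}$ as a Stirling sum, and then invokes the asymptotic $c(n,n-j)=\frac{n^{2j}}{2^jj!}+o(n^{2j})$ as a black box to isolate the dominant summand $(n_0,\dots,n_d)=(n-k,k,0,\dots,0)$, $(j_0,\dots,j_d)=(j,0,\dots,0)$. You instead start from the power-sum expansion \eqref{eq:nfold.coeffm}, biject partitions of $n$ with $n-j$ parts to partitions $\mu\vdash j$, and read off the dominant $\mu=(1^j)$ directly. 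The two routes are equivalent at heart---your dominant partition $\lambda=(2^j,1^{n-2j})$ is exactly the cycle type that dominates the count $c(n,n-j)$---but your version is a bit more self-contained, since the Stirling asymptotic is effectively re-derived rather than quoted. The paper's version has the minor advantage that the double sum over $(n_i)$ and $(j_i)$ makes the subleading contributions slightly easier to bound uniformly; your convolution estimate for $[t^k]\bigl(p_X(t)^N\cdot Q(t)\bigr)$ does the same job but requires the care you flag in the last paragraph. Both approaches share the same implicit dependence on $b_0$ in the error term, so neither is more general in that respect.
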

\begin{proof}
	Recall that $\multisetbody{m}{n}=\sum_{j=0}^{n-1}\frac{c(n,n-j)}{n!}m^{n-j}$, where $c(n,j)$ is the signless Stirling number of the first kind. It follows from \eqref{eq:nfold.coeff} that
\beq\label{eq:nfold.coeff2}\begin{split}
	\stan_{X^n,k}^{n-j}
	&=\sum_{\substack{j_0+\cdots+j_d=j,\\ n_0+\cdots+n_d=n,\\ 1\cdot n_1+\cdots +d\cdot n_d=k}}\prod_{i=0}^d \frac{c(n_i,n_i-j_i)}{n_i!}\cdot b_i^{n_i-j_i}.
\end{split}
\eeq
Since $j_0,\cdots, j_d$ and $n_1,\cdots, n_d$ are bounded by $j$ and $k$ respectively, and it is known that $c(n,n-j)=\frac{1}{2^jj!}n^{2j}+o(n^{2j})$,  
for any given $j\geq 0$, one can easily check that the summands in \eqref{eq:nfold.coeff2} are dominated by those with $n_0$ minimal and $j_0$ maximal, namely, those with 
\[(n_0,\cdots, n_d)=(n-k, k,0,\cdots, 0) \and (j_0,\cdots, j_d)=(j, 0,\cdots, 0). \]   
Indeed, $\frac{c(n_0,n_0-j_0)}{n_0!}=\frac{1}{2^{j_0}j_0!}\cdot\frac{n_0^{2j_0}}{n_0!}+o(\frac{n_0^{2j_0}}{n_0!})$ is maximal if $2j_0-n_0$ is maximal. 
In particular, the coefficient \eqref{eq:nfold.coeff2} is equal to 
\[\begin{split}
	\frac{c(n-k,n-k-j)b_0^{n-k-j}}{(n-k)!}\cdot \frac{c(k,k)b_1^{k}}{k!}&=\frac{b_0^{n-k-j}b_1^k}{2^jj!k!}\cdot\frac{(n-k)^{2j}}{(n-k)!} 
\end{split}
 \]
 modulo $o(\frac{n^{2j+k}}{n!})$. The first formula in the proposition now follows from 
 $\frac{(n-k)^{2j}}{(n-k)!} =\frac{n^{2j+k}}{n!}+o(\frac{n^{2j+k}}{n!}) $. The inequalities are immediate from the asymptotic formula.
\end{proof}

\begin{remark}[Zeta function]
	When $X$ is defined over $\ZZ$, and  has cohomology of pure Tate type, so that $\lvert X(\FF_q)\rvert = p_X(q)$ for any prime power $q$ (for example, this holds if $X$ admits a cell decomposition), the generating series 
    \eqref{eq:gen.series.nfold} is equal to the $m$-th power of the Zeta function
	\[\zeta_X(t)=\exp \sum_{n\geq1}\frac{1}{n}\lvert X(\FF_{q^n}) \rvert t^{n} =\exp \sum_{n\geq1}\frac{1}{n} \sfp_n\circ (p_X(q) t)=\Exp(p_X(q)t)\]
	of $X$ up to interchanging the roles of $q$ and $t$. 
	Here $q$ is regarded as a variable when applying the plethysm. 
	Moreover, the equality 
    \eqref{eq:gen.series.nfold} corresponds to the identity appearing in the Weil conjecture
	\[\zeta_X(t)=\frac{P_1(t)\cdots P_{2d-1}(t)}{P_0(t)\cdots P_{2d}(t)}\]
	with polynomials $P_{2i+1}(t)=1$ and $P_{2i}(t)=(1-q^it)^{b_i}$ for all $i$. 
\end{remark}

\medskip
\subsection{GIT moduli space of $n$ points on $\PP^1$}\label{S4.2} 
Let $n=2r+1$ be odd with $r\geq 1$. We let 
\[Y_n=(\PP^1)^n/\!\!/\SL_2(\C)\]
be the GIT quotient (cf.~\cite{MFK}) of $(\PP^1)^n$ by the diagonal $\SL_2(\C)$ action with respect to the linearization $\sO_{(\PP^1)^n}(1,\cdots, 1)$. 
It is a smooth projective variety whose cohomology was computed in \cite[Example~5.18]{Kir}.
 
The natural action of $\symS_n$ on $(\PP^1)^n$ permuting the factors commutes with that of $\SL_2(\CC)$ and hence we have the induced action of $\symS_n$ on $Y_n$. 
It is straightforward to apply Kirwan's method to compute the graded $\symS_n$-module $H^*(Y_n)$ to obtain (cf.~\cite[Lemma~4.7]{BM})
\beq \label{eq:GIT}
	\sum_{k=0}^{2r-2}\ch_{\symS_n}H^{2k}(Y_n)t^k 
	=\sum_{j=0}^{r-1}t^{j}(1+t^2+\cdots +t^{2(r-j-1)})\sfh_{(n-j,j)}.
\eeq
	Applying the Stanley map $\stan$, we obtain the characteristic polynomial $\stan_{Y_n}(m,t)$ of $Y_n$ as follows. 
\begin{proposition} For $n=2r+1$, 
    \[\stan_{Y_n}(m,t)
    =\sum_{j=0}^{r-1}t^j(1+t^2+\cdots +t^{2(r-j-1)})\multisetBig{m}{n-j}\multisetBig{m}{j}.\]
    In particular, 
    \[\begin{split}
    	&\stan_{Y_n}(m,1)=\sum_{j=0}^{r-1}(r-j)\multisetBig{m}{n-j}\multisetBig{m}{j}, ~\text{ and} \\
    	&\stan_{Y_n}(1,t)=\sum_{j=0}^{r-1}t^j(1+t^2+\cdots +t^{2(r-j-1)}).
    \end{split}
     \]
\end{proposition}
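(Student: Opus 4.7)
The plan is to apply the Stanley map $\stan$ coefficient-wise (in $t$) to the already-established identity \eqref{eq:GIT}, which encodes $\ch_{\symS_n}H^{2k}(Y_n)$ via Kirwan's method. By the graded definition $\stan_{Y_n}(m,t) = \sum_k \stan_{\ch_{\symS_n}H^{2k}(Y_n)}(m)\,t^k$ together with the $\QQ$-linearity of $\stan\colon \Lambda_n \to \QQ[m]$ from Definition~\ref{def:charpoly}, the problem reduces to evaluating $\stan_{\sfh_{(n-j,j)}}(m)$ for each $0 \leq j \leq r-1$ and then reassembling the sum with the $t$-coefficients intact.

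Next I would use the fact that $\widehat\stan$ is a $\QQ$-algebra homomorphism on $\Lambda$, so that $\stan$ is multiplicative on products of homogeneous symmetric functions of matching total degree. Combining this with the evaluation $\stan_{\sfh_k}(m) = \multisetbody{m}{k}$ from Example~\ref{ex:trivial} (or the general permutation-module formula in Example~\ref{ex:permutation.module}), I would obtain
\[
\stan_{\sfh_{(n-j,j)}}(m) \;=\; \stan_{\sfh_{n-j}}(m)\cdot \stan_{\sfh_j}(m) \;=\; \multisetBig{m}{n-j}\multisetBig{m}{j}.
\]
Substituting this into \eqref{eq:GIT} and carrying over the $t$-polynomials $t^j(1+t^2+\cdots+t^{2(r-j-1)})$ unchanged yields the first displayed formula of the proposition.

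Finally, the two specializations are direct substitutions. Setting $t=1$ collapses $1+t^2+\cdots+t^{2(r-j-1)}$ to $r-j$, which gives the stated formula for $\stan_{Y_n}(m,1)$. Setting $m=1$ uses the identity $\multisetbody{1}{k} = \binom{k}{k} = 1$ for every $k\geq 0$, so the factor $\multisetbody{m}{n-j}\multisetbody{m}{j}$ becomes $1$, leaving only the pure $t$-polynomials and yielding the formula for $\stan_{Y_n}(1,t)$.

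Since the equivariant cohomology identity \eqref{eq:GIT} is taken as an input (cited from \cite{BM}), there is essentially no serious obstacle: the proof is a mechanical application of the algebra-homomorphism property of $\widehat\stan$ together with the two elementary evaluations above. The only point requiring any care is bookkeeping the indices $j$ and the range of the inner geometric sum when specializing, which is routine.
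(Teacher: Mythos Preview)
Your proposal is correct and follows the same approach as the paper's proof, which is the one-line observation that the result is immediate from \eqref{eq:GIT} since $\stan_{\sfh_{(a,b)}}=\multisetbody{m}{a}\multisetbody{m}{b}$. You have simply spelled out in more detail the multiplicativity and specialization steps that the paper leaves implicit.
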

\begin{proof}
	This is immediate from \eqref{eq:GIT}, since $\stan_{\sfh_{(a,b)}}=\multisetbody{m}{a}\multisetbody{m}{b}$.
\end{proof}
Numerical calculations lead us to the following conjecture.
\begin{conjecture}\label{y23}
Let $n\geq 3$ be an odd integer.
	\begin{enumerate}
		\item $\stan_{Y_n}(m,t)$ is length log-concave in degree $k$ for any $0\leq k\leq n$.
		\item $\stan_{Y_n}(m,t)$ is length log-concave at $t=1$, i.e. $\stan_{Y_n}(m,1)\in \Q[m]$ is log-concave. 
	\end{enumerate}
\end{conjecture}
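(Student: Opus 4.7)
The plan is to exploit the explicit formula for $\stan_{Y_n}(m,t)$ just established. For each $k$, the coefficient
\[\stan_{Y_n,k}(m)=\sum_{\substack{0\le j\le j_{\max}\\ j\equiv k\ (\mathrm{mod}\ 2)}}\multisetBig{m}{n-j}\multisetBig{m}{j},\qquad j_{\max}:=\min(k,\,n-3-k),\]
and $\stan_{Y_n}(m,1)=\sum_{j=0}^{r-1}(r-j)\multisetBig{m}{n-j}\multisetBig{m}{j}$ are nonnegative $\Q$-linear combinations of products of falling factorials in $m$.

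My first step, in both (1) and (2), is to factor out a large common real-rooted piece. Using the identity $\multisetBig{m}{n-j}\multisetBig{m}{j}=\tfrac{1}{(n-j)!\,j!}\prod_{i=0}^{j-1}(m+i)\prod_{i=0}^{n-j-1}(m+i)$, every summand of $\stan_{Y_n,k}(m)$ is divisible by $D_k(m):=\prod_{i=0}^{n-j_{\max}-1}(m+i)$ and every summand of $\stan_{Y_n}(m,1)$ by $D(m):=\prod_{i=0}^{r+1}(m+i)$. Writing $\stan_{Y_n,k}=D_k\cdot Q_k$ and $\stan_{Y_n}(m,1)=D\cdot\tilde Q$, the factors $D_k$ and $D$ are products of distinct linear polynomials $(m+i)$ with nonnegative coefficients, hence real-rooted and so log-concave by Lemma~\ref{lem:realrooted}, with no internal zeros. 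By Lemma~\ref{lem:lc.product}, both conjectured statements reduce to proving that the residuals $Q_k(m)\in\Q[m]$ (of degree $j_{\max}$) and $\tilde Q(m)\in\Q[m]$ (of degree at most $r-1$) are log-concave with no internal zeros.

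The second step is to verify these residual log-concavities. After cancellation, the summand of $Q_k$ indexed by $j$ reads $\tfrac{1}{(n-j)!\,j!}\prod_{i=0}^{j-1}(m+i)\prod_{i=n-j_{\max}}^{n-j-1}(m+i)$, and a parallel formula holds for $\tilde Q$. Expanding each $\prod_i(m+i)$ in terms of signless Stirling numbers of the first kind converts every coefficient of $Q_k$ and $\tilde Q$ into an explicit sum of products of Stirling numbers, and the aim is then to verify the Newton inequalities $a_\ell^2\ge a_{\ell-1}a_{\ell+1}$ coefficient by coefficient, possibly inductively on $n$. An alternative combinatorial route uses the decomposition $H(Y_n)\cong\bigoplus_{\ell=0}^{r-1}\Q[\sfP_\ell]$ with $\sfP_\ell:=\{S\subseteq[n]:|S|\le\ell\}$, realising $H(Y_n)$ as a sum of permutation representations and, via Example~\ref{ex:chi.U_H}, presenting $\stan_{Y_n}(m,1)$ as a count of $\symS_n$-orbits of $m$-colourings of $\bigsqcup_\ell\sfP_\ell$, which one might then seek to reinterpret through matroid theory.

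The main obstacle is that $Q_k$ and $\tilde Q$ are in general \emph{not} real-rooted, so Lemma~\ref{lem:realrooted} is unavailable beyond the initial factorization. For example, when $n=7$ and $k=2$, one finds $Q_2\propto 11m^2+16m+15$ whose discriminant is $-404<0$, yet the log-concavity inequality $16^2=256>165=11\cdot15$ still holds. The proof must therefore produce log-concavity genuinely, not as a byproduct of real-rootedness. I expect the crux to be either (a) identifying $Q_k$ or $\tilde Q$ with the characteristic polynomial of a matroid or Hessenberg-type object whose log-concavity follows from Huh's theorem~\cite{Huh}, or (b) finding a wall-crossing recursion on $n$ based on \cite{BM} that reduces the problem to cases where log-concavity is already known, such as the $(\PP^1)^n$ computation of \S\ref{S4.1}.
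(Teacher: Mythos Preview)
The paper does \emph{not} prove this statement: it is explicitly stated as a conjecture, supported only by numerical verification for $n\le 400$. There is no proof in the paper for you to compare against.

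Your proposal is likewise not a proof but a sketch of a strategy, and you candidly acknowledge this. The first step---factoring out the common real-rooted piece $D_k(m)$ or $D(m)$ and invoking Lemmas~\ref{lem:realrooted} and~\ref{lem:lc.product}---is correct and useful, reducing the problem to the log-concavity of the residual polynomials $Q_k$ and $\tilde Q$. But your second step is where the argument stalls: as you yourself note with the $n=7$, $k=2$ example, the residuals are not real-rooted, so no standard lemma applies, and you are left hoping for either a matroid interpretation or a wall-crossing recursion, neither of which you supply. In its current form, then, your proposal does not close the gap; it identifies where the difficulty lies but does not resolve it. Since the paper treats the statement as open, this is not a deficiency relative to the paper---but you should not present the outline as a proof.
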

	We have checked that (1) and (2) hold for $n\leq 400$.

\begin{example}
	When $n=5$, the graded $\symS_n$-representation on $H^*(Y_5)$ is
	$\sum_{k=0}^2\ch_{\symS_5}H^{2k}(Y_5)t^k=\sfh_5(1+t^2)+\sfh_{(4,1)}t$. 
	Hence by applying $\stan$, we obtain
	\[\begin{split}
		\stan_{Y_5}(m,t)&=\frac{1}{120}(m^5+10m^4+35m^3+50m^2+24m)(1+t^2)\\
		&\quad +\frac{1}{24}(m^5+6m^4+11m^3+6m^2)t.
	\end{split}\]
	Evaluating it at $t=1$, we get
	\[\begin{split}
		\stan_{Y_5}(m,1)
		&=\frac{1}{120}(7m^5+50m^4+125m^3+130m^2+48).
	\end{split}
	\]
	It is easy to see that Conjecture \ref{y23} holds in this case.
\end{example}

On the other hand, the degree log-concavities fail for $Y_n$. More precisely, $\stan_{Y_n}(m,t)$ is degree log-concave neither  at $m=1$ nor in length 1.
\begin{example}
	$\stan_{Y_n}(1,t)$ is not log-concave whenever $n\geq7$. Indeed,
	\[\begin{split}
		\stan_{Y_7}(1,t)=1+t&+2t^2+t^3+t^4,\\
		\stan_{Y_9}(1,t)=1+t+2t^2&+2t^3+2t^4+t^5+t^6,\\
		\stan_{Y_{11}}(1,t)=1+t+2t^2+2t^3&+3t^4+2t^5+2t^6+t^7+t^8,\\
		&\quad\cdots
	\end{split}\]
	
	One can also easily check that the coefficient of $m$ in $\stan_{Y_n}(m,t)$ is
	\[\frac{1}{n}\sum_{j=0}^{r-1}t^{2j}=\frac{1}{n}(1+t^2+\cdots + t^{2r-4}+t^{2r-2})\]
	which is not log-concave due to the internal zeros at odd degrees.

\end{example}

Interestingly, numerical calculation shows that $\stan_{Y_n}(m,t)$ is degree log-concave in length $\ell$ when $\ell$ is \emph{sufficiently close} to $n$. For example, $\stan_{Y_n}(m,t)$ is degree log-concave in length $\ell$ for any $\ell$ in the range $\sqrt{n}+2\leq \ell \leq n$ for $5\leq n\leq 400$, although the bound $\sqrt{n}+2$ is definitely not sharp. Computing a sharp bound and analyzing its asymptotic behavior as $n$ grows would also be of interest.

\medskip

\subsection{Hessenberg varieties}\label{ss:Hess}
Regular semisimple Hessenberg varieties are subvarieties in the flag variety, naturally equipped with an $\symS_n$-action on its cohomology. For further details on the discussion below, we refer the reader to a survey \cite{AH}.

Denote the variety of flags in $\C^n$ by $\mathrm{Fl}_n=\{F_\bullet=(F_1\subset \cdots \subset F_n=\C^n)\}$ where $F_i$ are $i$-dimensional subspaces of $\C^n$ for each $i\in [n]:=\{1,\cdots,n\}$.

A \emph{Hessenberg function} is a non-decreasing function $h:[n]\to [n]$ with $h(i)\geq i$ for all $i$. Choosing a regular semisimple $n\times n$ matrix $s$ (or equivalently, an $n\times n$ matrix with $n$ distinct eigenvalues), one obtains a subvariety in $\mathrm{Fl}_n$, called a \emph{(regular semisimple) Hessenberg variety}, defined as
\[X_h=\{F_\bullet \in \mathrm{Fl}_n: sF_i\subset F_{h(i)} ~\text{ for all }i\}.\]
Here, we suppress $s$ in the notation, as its diffeomorphism type (hence its cohomology) does not depend on the choice of $s$. 

These varieties were introduced in \cite{DMPS}, where it is also proved that they are smooth of dimension $\sum_{i=1}^n(h(i)-i)$ when $s$ is regular semisimple. In this case, $X_h$ admits a natural action of the maximal torus $T$ in $\GL_n(\C)$ centralizing $s$. Since its $T$-fixed locus is finite (and isomorphic to $\symS_n$), 
$X_h$ admits a cell decomposition by the Bia\l{}ynicki-Birula theorem and $H^{2k+1}(X_h)=0$ for all $k$. 
Later in \cite{Tym}, Tymoczko defined an $\symS_n$-action on the $T$-equivariant cohomology $H^*_T(X_h)$ using the GKM theory \cite{GKM} and $X_h^T\cong \symS_n$, and showed that it descends to an $\symS_n$-action on $H^*(X_h)$ via the canonical projection $H^*_T(X_h)\to H^*(X_h)$. 
We note that this $\symS_n$-action on $H^*(X_h)$ may not lift to an $\symS_n$-action on $X_h$ \cite{BEHLLMS}.

We thus obtain 
\beq\label{y27} \sum_{k\ge 0}\ch_{\symS_n} (H^{2k}(X_h))t^k\in \Lambda_n[t]\eeq
whose characteristic polynomial is 
\beq\label{y25}\stan_{X_h}(m,t)=
\sum_{k\ge 0}\stan_{H^{2k}(X_h)}(m)t^k\in \Q[m,t].\eeq 

\smallskip
On the other hand, there is a purely combinatorial way to associate a symmetric function to each Hessenberg function $h$. 
The chromatic symmetric function introduced by Stanley in \cite{Sta2} associates a symmetric function 
\beq\label{eq:csf}\csf_{\Gamma}=\sum_{\kappa:[n]\to \Z_{>0}\text{ proper}}x_{\kappa(1)}\cdots x_{\kappa(n)} \in \Lambda_n\eeq 
to each graph $\Gamma$ with $n$ vertices, where $\kappa$ runs over all the coloring maps from the vertex set $[n]$ to the set of colors indexed by positive integers that are \emph{proper}, that is, $\kappa(i)\neq \kappa(j)$ whenever $(i,j)\in E(\Gamma)$. By the substitution \eqref{4}, its characteristic polynomial 
$$\stan_{\csf_{\Gamma}}(m)=\chrom_{\Gamma}(m)$$
is the usual chromatic polynomial. 

In \cite{SW}, Shareshian and Wachs introduced the notion of the chromatic \emph{quasi}symmetric function $\cqsf_\Gamma$ of a graph $\Gamma$, which is a polynomial in a variable $t$ whose coefficients are quasisymmetric functions, that is, functions invariant under order-preserving change of variable indices. The chromatic quasisymmetric function reduces to the original chromatic symmetric function when evaluated at $t=1$. 

For a Hessenberg function $h:[n]\to [n]$, 
let $\Gamma_h$ be the \emph{incomparability graph associated to $h$}, i.e. the graph whose vertex set and edge set are 
\[V(\Gamma_h)=[n] \and E(\Gamma_h)=\{(i,j)\in [n]^2:i<h(i)\leq j\}\]
respectively.
Shareshian and Wachs in \cite{SW} showed that the chromatic quasisymmetric function of the graph $\Gamma_h$
is in fact a graded symmetric function  
\beq\label{y28} \cqsf_{\Gamma_h}(t)\in \Lambda_n[t]\eeq   
so that we have the characteristic polynomial 
\beq\label{y26} \stan_{\Gamma_h}(m,t):=\stan_{\cqsf_{\Gamma_h}}(m,t) \in \Q[m,t]
\eeq
that satisfies 
\beq\label{y31}
\stan_{\Gamma_h}(m,1)=\stan_{\csf_{\Gamma_h}}(m)=\chrom_{\Gamma_h}(m).\eeq

\smallskip

A remarkable conjecture by Shareshian-Wachs  in \cite{SW}, proved by Brosnan-Chow  and independently by Guay-Paquet tells us that \eqref{y28} is the $\omega$-dual of \eqref{y27} where $\omega:\Lambda\to \Lambda$ is the involution sending $\sfe_\lambda$ to $\sfh_\lambda$. 
\begin{theorem}\cite{BC, GP2}\label{thm:SWconj} For any Hessenberg function $h$,
$$\omega\sum_{k\ge 0}\ch_{\symS_n}(H^{2k}(X_h))t^k=\cqsf_{\Gamma_h}.$$ 
\end{theorem}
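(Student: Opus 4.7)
The plan is to follow the geometric strategy of Brosnan--Chow: realize Tymoczko's $\symS_n$-action as monodromy in a natural family, derive a recursion via the local invariant cycle theorem under a one-parameter degeneration, and match it with a corresponding combinatorial recursion for $\cqsf_{\Gamma_h}$. The starting point is that the Hessenberg varieties fit into a smooth proper family $\pi: \cX_h \to \mathrm{Mat}_n^{rs}$ over the space of regular semisimple matrices, with fiber $X_h(s)$ over $s$. Although Tymoczko's $\symS_n$-action is defined abstractly from the GKM picture on $X_h^T$, the first task is to identify it with the monodromy representation of the local system $R^{2k}\pi_*\Q$ under the natural surjection $\pi_1(\mathrm{Mat}_n^{rs}/T) \twoheadrightarrow \symS_n$ associated to the eigenvalue covering.

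Next, one extracts a recurrence for $F_h(t) := \sum_{k \ge 0} \ch_{\symS_n} H^{2k}(X_h)\, t^k$ by specializing to a one-parameter family of regular semisimple matrices degenerating to one whose eigenvalues collide in a controlled way. The local invariant cycle theorem, combined with compatibility under restriction--induction along a parabolic, relates the $\symS_n$-equivariant nearby cohomology to Frobenius characteristics of Hessenberg varieties of strictly smaller rank, induced from an appropriate Young subgroup of $\symS_n$. Packaging this carefully yields a deterministic recurrence for $F_h(t)$ reducing $h$ to strictly smaller Hessenberg data.

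On the combinatorial side, the corresponding recurrence for $\cqsf_{\Gamma_h}$ follows by analyzing how proper colorings of $\Gamma_h$ decompose when one vertex (or a pair of vertices) is distinguished, with the $t$-grading tracked by the ascent statistic. Since the involution $\omega$ exchanges the $\sfh$- and $\sfe$-expansions while being compatible with the plethystic structure, it converts the geometric side into the form matching the coloring expansion of $\cqsf_{\Gamma_h}$. The base case $h = \mathrm{id}$ (so that $X_h$ is a point and $\Gamma_h$ is edgeless) is immediate, and the induction then closes.

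The principal obstacle is the geometric step: reconciling the GKM-theoretic definition of Tymoczko's action on $H^*(X_h)$ with the topological monodromy of $\pi$, and then verifying that this action respects the decomposition of nearby cohomology supplied by the local invariant cycle theorem. Since there is typically no algebraic $\symS_n$-action on $X_h$ itself (cf.\ \cite{BEHLLMS}), these compatibilities are genuinely nontrivial; in practice one needs Hodge-theoretic input, for instance from Saito's theory of mixed Hodge modules, together with a careful analysis of how the Bia\l{}ynicki--Birula affine paving of $X_h$ degenerates at the boundary of $\mathrm{Mat}_n^{rs}$.
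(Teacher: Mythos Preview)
The paper does not give its own proof of this theorem: it is stated with attribution to \cite{BC, GP2} and used as input, with the additional remark ``See \cite{KL} for an elementary proof based on the modular law.'' So there is no in-paper argument to compare against.

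Your proposal is a faithful outline of the Brosnan--Chow strategy \cite{BC}: identify Tymoczko's dot action with monodromy of the family over $\mathrm{Mat}_n^{rs}$, use the local invariant cycle theorem to produce a recurrence relating $F_h$ to smaller Hessenberg data, and match it to a combinatorial recurrence for $\cqsf_{\Gamma_h}$. As a high-level plan this is correct, but as you yourself flag, the substantive content lies precisely in the ``principal obstacle'' paragraph: the identification of the GKM action with monodromy, the decomposition of nearby cohomology, and the Hodge-theoretic control of the degeneration are the heart of \cite{BC}, and you have described them rather than carried them out. So this is an accurate roadmap, not a proof.

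If you want an argument that can actually be written out in a few pages, the modular-law approach of \cite{KL} (or Guay-Paquet's Hopf-algebraic proof \cite{GP2}) is more self-contained: one shows that both sides satisfy the same linear relation whenever $h$ lies between two adjacent Hessenberg functions differing at a single value, and that this relation together with the multiplicative behavior on disconnected incomparability graphs determines the invariant uniquely from the base cases. That route avoids mixed Hodge modules entirely.
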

See \cite{KL} for an elementary proof based on the modular law.

By applying the Stanley map $\stan$, we get the following.
\begin{corollary}\label{y30}
Let 
\[\stanomega_{Z}(m,t)=\stan\left(\omega \sum_{k\ge 0}\ch_{\symS_n}(H^{2k}(Z))t^k\right)=\sum_{k\ge 0}\stan_{H^{2k}(Z)\otimes\sgn_n}t^k\]
for any connected smooth projective variety $Z$ with an $\symS_n$-action on each even degree cohomology group of $Z$. Then 
\beq\label{y29} \stanomega_{X_h}(m,t)=\stan_{\Gamma_h}(m,t).\eeq
\end{corollary}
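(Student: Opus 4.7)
The plan is to derive Corollary~\ref{y30} as a direct consequence of the Shareshian--Wachs identity (Theorem~\ref{thm:SWconj}) by applying the Stanley map $\stan$ (equivalently, its graded extension $\widehat\stan$ of \eqref{7}) to both sides. Since Theorem~\ref{thm:SWconj} is an identity in $\Lambda_n[t]$ and $\stan:\Lambda_n[t]\to\Q[m,t]$ is a well-defined $\Q$-linear map (Definition~\ref{def:charpoly}), applying it to both sides preserves the equality. Concretely, I will compute each side separately.

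First, for the left-hand side, by the very definition of $\stanomega_{X_h}$ given in the statement,
\[
\stan\!\left(\omega\sum_{k\ge 0}\ch_{\symS_n}(H^{2k}(X_h))t^k\right)=\stanomega_{X_h}(m,t).
\]
To justify the second expression $\sum_{k\ge 0}\stan_{H^{2k}(X_h)\otimes \sgn_n}(m)t^k$ for $\stanomega_{X_h}$, I will use the identity $\omega\,\ch_{\symS_n}(V)=\ch_{\symS_n}(V\otimes \sgn_n)$ (already invoked in the proof of Lemma~\ref{l:mult}(3)) together with the linearity of $\stan$ in each graded piece. Second, for the right-hand side, by definition \eqref{y26} we have $\stan(\cqsf_{\Gamma_h})=\stan_{\Gamma_h}(m,t)$.

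Combining these two observations with Theorem~\ref{thm:SWconj} yields the desired equality \eqref{y29}. There is no substantive obstacle here: the content of the corollary lies entirely in the Shareshian--Wachs identity, and the step performed in the proof is simply the observation that the Stanley map $\stan$ is well-defined on $\Lambda_n[t]$ so it can be applied to an identity of graded symmetric functions. In particular, specializing $t=1$ and using \eqref{y31} recovers the identity $(-1)^n\stan_{H(X_h)}(-m)=\chrom_{\Gamma_h}(m)$ from \eqref{y41} via Lemma~\ref{l:omega}, providing a sanity check on the computation.
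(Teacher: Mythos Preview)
Your proposal is correct and matches the paper's approach exactly: the paper simply states that Corollary~\ref{y30} follows ``by applying the Stanley map $\stan$'' to Theorem~\ref{thm:SWconj}, which is precisely what you do. Your additional justification of the alternate expression via $\omega\,\ch_{\symS_n}(V)=\ch_{\symS_n}(V\otimes\sgn_n)$ and the sanity check at $t=1$ are helpful elaborations but not required.
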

By \eqref{y31} and \eqref{y29}, we have 
\beq\label{eq:chrom.poly.Hess}\stanomega_{X_h}(m,1)=\chrom_{\Gamma_h}(m)=\prod_{i=1}^n\left(m-(h(i)-i)\right),\eeq
where the second equality is an easy calculation based on the fact that $\Gamma_h$ is directed. 
In particular, this is log-concave as it should be  by a general result of Huh \cite{Huh}. 

\smallskip
By Lemma~\ref{l:omega}, we have 
\beq\label{y33}\stanomega_{F}(m)=(-1)^n\stan_F(-m)\eeq for any $F\in \Lambda_n$ and the following is immediate.
\begin{proposition}
	The characteristic polynomial 
	of  a Hessenberg variety $X_h$ satisfies 
	\beq\label{eq:char.poly.Hess}\stan_{X_h}(m,1)=\prod_{i=1}^n (m+h(i)-i)\eeq
	and $\stan_{X_h}(m,t)$ is length log-concave at $t=1$.
\end{proposition}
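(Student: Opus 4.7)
The plan is to deduce both claims from the chromatic polynomial formula \eqref{eq:chrom.poly.Hess} via the $\omega$-duality \eqref{y33} of the Stanley map, and then invoke the real-roots criterion for log-concavity from Lemma~\ref{lem:realrooted}.

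First, I would specialize Corollary~\ref{y30} at $t=1$ and apply \eqref{y33} to the symmetric function $F=\sum_{k\ge 0}\ch_{\symS_n}(H^{2k}(X_h))\in \Lambda_n$. This yields
\[
(-1)^n\,\stan_{X_h}(-m,1) \;=\; \stanomega_{X_h}(m,1) \;=\; \chrom_{\Gamma_h}(m) \;=\; \prod_{i=1}^{n}\bigl(m-(h(i)-i)\bigr).
\]
Substituting $m \mapsto -m$ and multiplying both sides by $(-1)^n$ then gives
\[
\stan_{X_h}(m,1) \;=\; (-1)^n\prod_{i=1}^{n}\bigl(-m-(h(i)-i)\bigr) \;=\; \prod_{i=1}^{n}\bigl(m+(h(i)-i)\bigr),
\]
which is the first claimed identity.

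For the log-concavity assertion, observe that since $h(i)\ge i$ for every $i$, each linear factor $m+(h(i)-i)$ has nonnegative coefficients and a single real, nonpositive root at $m=-(h(i)-i)$. Hence $\stan_{X_h}(m,1)\in \RR_{\ge 0}[m]$ factors completely over $\RR$ and is real-rooted, so length log-concavity at $t=1$ follows directly from Lemma~\ref{lem:realrooted}. One could equally well argue inductively via Lemma~\ref{lem:lc.product}, since each linear factor is trivially log-concave with no internal zeros.

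There is no serious obstacle: everything reduces to bookkeeping once Theorem~\ref{thm:SWconj} (which supplies Corollary~\ref{y30}) and the elementary product formula \eqref{eq:chrom.poly.Hess} for $\chrom_{\Gamma_h}$ are in hand. The only mild subtlety is the sign cancellation that turns the chromatic product $\prod_i(m-(h(i)-i))$ into the characteristic product $\prod_i(m+(h(i)-i))$; this is exactly the image of the swap $\sfh_\lambda\leftrightarrow \sfe_\lambda$ under $\omega$ recorded in Lemma~\ref{l:omega}.
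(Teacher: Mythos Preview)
Your proof is correct and follows essentially the same route as the paper: the paper simply declares the proposition ``immediate'' from the $\omega$-duality \eqref{y33} and the chromatic formula \eqref{eq:chrom.poly.Hess}, and you have spelled out exactly that deduction, including the real-rootedness argument for log-concavity via Lemma~\ref{lem:realrooted}.
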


There are combinatorial formulas for the coefficients of $\sfp_\lambda$ in $\cqsf_{\Gamma_h}(t)$ in \cite[Theorem 4 and Corollary~7]{Ath}. These immediately induce combinatorial formulas for $\stan_{X_h}(m,t)$ by \eqref{y29} and \eqref{y33}.
We do not know yet how to deduce or disprove the other log-concavities from them, except the following two results.
Write $[r]_t:=\sum_{i=0}^{r-1}t^i$ for $r\geq 0$.
\begin{enumerate}
	\item The evaluation of $\stan_{X_h}(m,t)$ at $m=1$ is 
	\[\stan_{X_h}(1,t)=\sum_{k\geq0}\dim (H^{2k}(X_h)^{\symS_n})t^k=\prod_{i=1}^n[h(i)-i+1]_t.\]
	In particular, $\stan_{X_h}(m,t)$ is degree log-concave at $m=1$.
	\item The coefficient of $m$ in $\stan_{X_h}(m,t)$ is 
	\[
    \frac{1}{n}[n]_t\prod_{i=1}^{n-1}[h(i)-i]_t.\]
    In particular, $\stan_{X_h}(m,t)$ is degree log-concave at length 1.
\end{enumerate}
Both formulas follow by combining the results of \cite{SW,AHHM,ST}, together with Lemmas~\ref{l:mult} and \ref{l:M.mult}. See also \cite[Proposition~5.10]{KL}.

\medskip
 
We end this section with the following natural question which may be regarded as a graded version of Huh's theorem in \cite{Huh}.

\begin{question}
	Is $\stan_{X_h}(m,t)$ or $\stanomega_{X_h}(m,t)=\stan_{\Gamma_h}(m,t)=\stan_{\cqsf_{\Gamma_h}}(m,t)$ length log-concave in degree $k$ for any $k$?
\end{question}

\bigskip

\section{Characteristic polynomial of $\Mbar_{0,n}$}\label{s:M0n}
The moduli space $\Mbar_{0,n}$ of $n$-pointed stable curves of genus $0$ is one of the most important varieties in algebraic geometry and has been much studied (cf.~\cite[\S1]{CKL}). It comes with a natural action of the symmetric group $\symS_n$ by permuting the marked points and thus its cohomology $H^*(\Mbar_{0,n})$ is a graded $\symS_n$-module. The rest of this paper is devoted to the study of the characteristic polynomial $\stan_{\Mbar_{0,n}}$ of $\Mbar_{0,n}$ as well as that of $\Mbar_{0,n+1}$ on which $\symS_n$ is acting by permuting the first $n$ marked points with the last one fixed. Based on our pervious results in \cite{CKL, CKL2}, we provide recursive formulas for the characteristic polynomials (Theorem~\ref{thm:recursion}).
In the last subsection \S\ref{ss:coloring.UT}, we find an interpretation of the characteristic polynomials of permutation representations that constitute the $\symS_n$-representation $H^*(\Mbar_{0,n+1})$, in terms of colorings (Corollary~\ref{cor:coloring.chiUT}).

\subsection{Recursive formulas for the representations}\label{ss:recursion.rep}
This section reviews the results in \cite{CKL,CKL2}, with a focus on formulas. For an overview together with related geometry, we refer the reader to \cite[\S3]{CKL2}. 

The moduli space $\Mbar_{0,n}$ is a compactification of the variety $$(\PP^1)^n-\Delta/\Aut(\PP^1)$$
of $n$ distinct ordered points on $\PP^1$ up to the automorphism group action. 
It certainly comes with the natural action of $\symS_n$ permuting the marked points. 
When studying the cohomology of $\Mbar_{0,n}$ as a graded $\symS_n$-module, it helps to study the cohomology of $\Mbar_{0,n+1}$ as well, on which $\symS_n$ is acting by permuting the first $n$ marked points with the last one fixed.  

We write the $\symS_n$-equivariant Poincar\'e polynomial of $\Mbar_{0,n}$ and $\Mbar_{0,n+1}$ as
\[\begin{split}
	&P_n=\sum_{k=0}^{n-3}P_{n,k}t^k \in \Lambda_n[t], \quad \text{where }~P_{n,k}=\ch_{\symS_n}H^{2k}(\Mbar_{0,n})\in \Lambda_n;\\
	&Q_n=\sum_{k=0}^{n-2}Q_{n,k}t^k \in \Lambda_n[t], \quad \text{where }~Q_{n,k}=\ch_{\symS_n}H^{2k}(\Mbar_{0,n+1})\in \Lambda_n.
\end{split}\]
Moreover, we consider their generating series as
\[\begin{split}
	&P=1+\sfh_1+\sfh_2+\sum_{n\geq 3}P_n\quad \in \Lambda\llbracket t\rrbracket=\prod_n\Lambda_n\llbracket t\rrbracket;\\
	&Q=1+\sfh_1+\sum_{n\geq 2}Q_n\quad \in \Lambda\llbracket t\rrbracket,
\end{split}
\]
where we treat $\Mbar_{0,n}$ as a point when $0\leq n\leq 2$.

In \cite{CKL, CKL2}, we established formulas for $P$ and $Q$ which were obtained in two stages:
(1) comparison of $P$ and $Q$, (2) computation of $Q$.

\noindent\underline{Stage 1}. 
The relation between $P_n$ and $Q_n$ is obtained in \cite[\S4]{CKL} by the theory of $\delta$-stable quasimaps, developed in \cite{CK}, in a very special case which provides an $\symS_n$-equivariant factorization of the canonical forgetful morphism $\Mbar_{0,n+1}\to \Mbar_{0,n}$ by blowups. 
By the blowup formula and geometric invariant theory, we have the following.  
\begin{theorem}\cite[Theorem~4.8]{CKL} \label{thm:CKL.quasimap}
	For $n\geq 3$, 
	\[(1+t)P_n=Q_n-t\left(\sum_{2\leq h<\frac{n}{2}}Q_hQ_{n-h}+\sfs_{(1,1)}\circ Q_{\frac{n}{2}}\right)\]
	where we set $Q_{\frac{n}{2}}=0$ for $n$ odd. Equivalently, the generating series $P$ and $Q$ satisfy (cf.~\cite[Corollary~3.3]{CKL2})
	\[(1+t)P=(1+t+\sfh_1t)Q-t\sfs_{(1,1)}\circ Q.\]
\end{theorem}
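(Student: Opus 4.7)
The plan is to resolve the birational relationship between $\Mbar_{0,n+1}$ and the universal curve over $\Mbar_{0,n}$ by constructing an $\symS_n$-equivariant smooth intermediate space $W_n$ that is simultaneously a $\PP^1$-bundle over $\Mbar_{0,n}$ and a sequence of blowups of $\Mbar_{0,n+1}$. Using the $\delta$-stable quasimap theory of \cite{CK} applied to a trivial target, I would construct morphisms
\[
\Mbar_{0,n+1} \xleftarrow{~\varphi~} W_n \xrightarrow{~\psi~} \Mbar_{0,n}
\]
where $\psi$ is a Zariski-locally trivial $\PP^1$-bundle with trivial $\symS_n$-action on the fibers, and $\varphi$ is birational. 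Since $\psi$ is a $\PP^1$-bundle, one immediately reads off $\sum_k \ch_{\symS_n} H^{2k}(W_n)\, t^k = (1+t) P_n$.

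Next, a wall-crossing analysis along a one-parameter family of stability parameters $\delta$ shows that $\varphi: W_n \to \Mbar_{0,n+1}$ factors as a sequence of $\symS_n$-equivariant smooth blowups of codimension $2$. The centers are identified with the boundary strata of $\Mbar_{0,n+1}$ of the form $\Mbar_{0,|A|+1} \times \Mbar_{0,|B|+1}$, where $\{A,B\}$ runs over unordered partitions of $\{1,\dots,n\}$ with $|A|,\,|B| \geq 2$; these are precisely the loci on which $\varphi$ contracts an exceptional $\PP^1$-fiber.

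Applying the blowup formula $\symS_n$-equivariantly (a codimension-$2$ blowup $\mathrm{Bl}_Z X$ contributes one copy of $H^*(Z) \cdot t$ to $H^*(X)$) and summing, one obtains
\[
(1+t)\, P_n \;=\; Q_n - t \sum_{\{A,B\}} \ch_{\symS_n} H^*\!\left(\Mbar_{0,|A|+1} \times \Mbar_{0,|B|+1}\right).
\]
For an $\symS_n$-orbit with $|A| = h \neq n-h$, induction from $\symS_h \times \symS_{n-h}$ to $\symS_n$ identifies the contribution with the product $Q_h Q_{n-h}$ in $\Lambda$. For the balanced case $h = n/2$, the stabilizer is $(\symS_{n/2} \times \symS_{n/2}) \rtimes \symS_2$, and the swap involution also acts on the normal bundle of the blowup center; a local GIT analysis of the wall-crossing shows it acts by $-1$ on the determinant of this normal bundle, so the total contribution is the antisymmetric square plethysm $\sfs_{(1,1)} \circ Q_{n/2}$ rather than $\sfh_2 \circ Q_{n/2}$. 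This yields the polynomial identity; the equivalent generating-series form follows by the plethystic rearrangement $\sfs_{(1,1)} \circ Q = \tfrac{1}{2}(Q^2 - Q \circ \sfp_2)$ applied degree by degree.

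The main obstacle is twofold. First, constructing $W_n$ equivariantly with the blowup loci being exactly the desired boundary strata requires a careful setup of \cite{CK} and a delicate tracking of the stability chamber structure. Second, the sign computation in the balanced case — the crux of why $\sfs_{(1,1)}$ and not $\sfh_2$ appears — is a subtle $\symS_2$-equivariant analysis of the normal bundle at balanced splittings, where the GIT quotient picture enters essentially; getting this sign right is what distinguishes the correct plethystic operation.
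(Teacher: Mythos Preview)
Your overall strategy matches the approach the paper attributes to \cite{CKL}: use the $\delta$-stable quasimap wall-crossing of \cite{CK} to factor the forgetful morphism $\Mbar_{0,n+1}\to\Mbar_{0,n}$ through a $\PP^1$-bundle $W_n$ over $\Mbar_{0,n}$, then apply the blowup formula and GIT. However, you have the direction of the birational morphism $\varphi$ reversed, and this is not cosmetic.

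You declare $\varphi:W_n\to\Mbar_{0,n+1}$ to be a composite of blowups, i.e.\ $W_n$ dominates $\Mbar_{0,n+1}$. The codimension-$2$ blowup formula then reads $H^*(W_n)=H^*(\Mbar_{0,n+1})\oplus t\cdot(\text{centers})$, hence $(1+t)P_n = Q_n + t(\cdots)$, with the \emph{wrong sign}; you then write the correct formula $(1+t)P_n=Q_n-t(\cdots)$ without accounting for this. The factorization actually runs the other way: $\Mbar_{0,n+1}\to\cdots\to W_n\to\Mbar_{0,n}$, so $\Mbar_{0,n+1}$ is the iterated blowup and $Q_n=(1+t)P_n+t(\cdots)$, which rearranges to the stated identity. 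This also explains why ``factorization of the forgetful morphism by blowups'' makes sense: the intermediate spaces sit \emph{between} $\Mbar_{0,n+1}$ and $\Mbar_{0,n}$, not above $\Mbar_{0,n+1}$. Correspondingly, your description of the centers as ``boundary strata of $\Mbar_{0,n+1}$'' is off: your partitions $\{A,B\}$ are of $\{1,\dots,n\}$, and the loci $\Mbar_{0,|A|+1}\times\Mbar_{0,|B|+1}$ are the blowup centers inside $W_n$ and its successors (sitting over boundary divisors of $\Mbar_{0,n}$), not strata of $\Mbar_{0,n+1}$.

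On the balanced case: your instinct that the $\symS_2$-action on the normal geometry is responsible for $\sfs_{(1,1)}$ rather than $\sfh_2$ is in the right direction, but the argument ``$\symS_2$ acts by $-1$ on $\det\mathcal N$'' does not work as stated---if $\symS_2$ swaps the two normal line-bundle summands it acts trivially on their tensor product, and in any case the hyperplane class on $\PP(\mathcal N)$ is $\symS_2$-invariant. The actual sign in \cite{CKL} comes from the GIT/wall-crossing description of this stratum, where the $\symS_2$ interacts with the $\GG_m$-weights; this is exactly the ``geometric invariant theory'' ingredient the paper flags, and it requires more than a determinant computation.
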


\noindent\underline{Stage 2}. 
The combinatorial formula for $Q_n$ in \cite{CKL} is derived from the well-known $\symS_n$-equivariant factorization of the Kapranov map 
$$\Mbar_{0,n+1}\lra \PP^{n-2}$$ by a sequence of blowups. This corresponds to a realization of $\Mbar_{0,n+1}$ as the wonderful compactification of the complement of the braid hyperplane arrangement 
\cite{Kap}. 
The challenging task of tracking all the blowup centers is accomplished by the combinatorics of weighted rooted trees.

\begin{definition}\cite[Definition~3.5]{CKL2}
	A \emph{labeled rooted tree} 
	is a tree with one distinguished leg called, the \emph{output}, and additional $n$ legs, called the \emph{inputs}, labeled by $1,\cdots,n$, for some $n\in \Z_{\geq0}$. The vertex to which the output is attached is called the \emph{root}. 
    
    More concretely, a labeled rooted tree is a triple of a tree $T$, the output map $\mathrm{pt}\to V(T)$ and the input map $\{1,\cdots, n\}\to V(T)$, where $V(T)$ is the set of vertices of $T$. In particular, permuting the labels of the inputs attached to the same vertex does not distinguish the labeled rooted tree. 
    
    A \emph{rooted tree} is a tree obtained from a labeled rooted tree by forgetting the labels on the inputs. 
\end{definition}
\begin{definition}
	Let $T$ be a (resp. labeled) rooted tree. A \emph{weight function} on $T$ is a  function $w:V(T)\to \Z_{\geq0}$ on the vertex set $V(T)$. We denote the total weight by $\lvert w\rvert=\sum_{v\in V(T)}w(v)$. 
	
	We call the pair $(T,w)$ a (resp. \emph{labeled}) \emph{weighted rooted tree}.
\end{definition}

\def\val{\mathrm{val} }
\begin{definition}\label{y20}
	For $n$ and $k$ in $\Z_{\geq0}$, we define $\sT_{n,k}$ (resp. $\sT_{n,k}^{\mathrm{lab}}$) to be the set of 
	(resp. labeled) weighted rooted trees $(T,w)$
	with $n$ inputs and 
	total weight $\lvert w\rvert =k$, that satisfy the following \emph{valency conditions}:
	\begin{enumerate}
		\item $0\leq w(v)\leq \val(v)-3$ if $v$ is the root;
		\item $0< w(v) \leq \val(v)-3$ if $v$ is not the root.
	\end{enumerate}
	There is a natural $\symS_n$-action on $\sT_{n,k}^{\mathrm{lab}}$ reordering labels and $\sT_{n,k}=\sT_{n,k}^{\mathrm{lab}}/\symS_n$.
\end{definition}
\begin{proposition}\cite[Proposition~5.12]{CKL}\label{p:CKL.Q}
	The $\symS_n$-representation $H^{2k}(\Mbar_{0,n+1})$ is a permutation representation with an $\symS_n$-basis isomorphic to $\sT_{n,k}^{\mathrm{lab}}$.
\end{proposition}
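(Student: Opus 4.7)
The plan is to prove this via the Kapranov realization of $\Mbar_{0,n+1}$ as an iterated blowup of $\PP^{n-2}$, in which the last marked point is distinguished so as to break the $\symS_{n+1}$-symmetry down to the $\symS_n$-action that permutes the first $n$ markings. In Kapranov's construction, $\Mbar_{0,n+1}$ is obtained from $\PP^{n-2}$ by successively blowing up the proper transforms of linear subspaces spanned by points labeled by subsets $S\subseteq\{1,\dots,n\}$ (of appropriate sizes), in increasing order of dimension, and the entire blowup sequence is manifestly $\symS_n$-equivariant.

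Next I would apply the smooth blowup formula iteratively: at each stage, a blowup $\widetilde{X}=\mathrm{Bl}_Z X$ with smooth center $Z$ of codimension $c$ enlarges $H^*(X)$ by $\bigoplus_{i=1}^{c-1} E^{i}\cdot\pi^*H^*(Z)$, where $E$ is the new exceptional class. Two structural facts make this recursive. First, the Kapranov centers form a laminar family on subsets (any two are either nested or intersect in at most one element), so that inside any monomial basis element the ``used'' centers automatically organize into a rooted forest by inclusion. Second, each center is itself, after the preceding blowups, an iterated Kapranov blowup of a smaller projective space indexed by its contained subsets, so the blowup formula can be applied recursively inside each $Z$.

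From this I would read off a basis for $H^{2k}(\Mbar_{0,n+1})$: a basis element is specified by (a) a laminar collection of centers that are used, (b) a positive exceptional exponent $w(S)$ for each used $S$, bounded by its codimension minus one, and (c) a nonnegative power $w_0$ of the hyperplane class of the ambient $\PP^{n-2}$. This data is encoded by a labeled weighted rooted tree: the root (carrying weight $w_0$) represents $\PP^{n-2}$, internal vertices represent used centers with weights equal to their exceptional exponents, an input labeled $i\in\{1,\dots,n\}$ is attached at the deepest used $S$ containing it, and children of a vertex $v$ correspond to the maximal used centers properly contained in $S_v$. Matching the codimension of each blowup center against the number of direct inputs plus children at the corresponding vertex yields exactly the valency condition $w(v)\le \val(v)-3$ of Definition~\ref{y20}, and the distinction $w(v)>0$ at a non-root vertex versus $w(v)\ge 0$ at the root reflects that a non-root vertex would not appear in the tree unless its exponent is nonzero.

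Since the Kapranov construction is $\symS_n$-equivariant, the induced action on this basis is by relabeling inputs, which matches the $\symS_n$-action on $\sT_{n,k}^{\mathrm{lab}}$; this exhibits $H^{2k}(\Mbar_{0,n+1})$ as a permutation representation with $\symS_n$-basis isomorphic to $\sT_{n,k}^{\mathrm{lab}}$. The main obstacle will be the combinatorial bookkeeping: verifying that iterated application of the blowup formula yields exactly this basis with no over- or under-counting, and carefully matching the codimension of each center at the moment it is blown up with the quantity $\val(v)-2$ on the tree side.
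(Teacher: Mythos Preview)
Your proposal is correct and follows essentially the same approach as the one the paper attributes to \cite{CKL}: the paper explicitly states that this result is derived from the $\symS_n$-equivariant factorization of the Kapranov map $\Mbar_{0,n+1}\to\PP^{n-2}$ as a sequence of blowups, with the blowup centers tracked via the combinatorics of weighted rooted trees. Your outline of iterating the blowup formula, encoding the resulting monomial basis by labeled weighted rooted trees, and matching codimensions to the valency conditions is precisely this strategy.
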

We decompose this permutation representation into transitive permutation representations $U_H$ in Example \ref{ex:chi.U_H} as follows.
\begin{definition}
	For $(T,w)\in \sT_{n,k}^{\mathrm{lab}}$, we denote by $\Stab(T,w)$ the stabilizer subgroup of $(T,w)$ in $\symS_n$, that is the subgroup consisting of permutations of labels that preserves the isomorphism class of $(T,w)$ as a labeled weighted rooted tree.   We define 
	\beq\label{y18} U_{(T,w)}=U_{\Stab(T,w)}\eeq
	using  the notation of \eqref{y16}.
	For $(T,w)\in \sT_{n,k}=\sT_{n,k}^{\mathrm{lab}}/\symS_n$, we define $\Stab(T,w)$ and $U_{(T,w)}$ similarly by taking any lift in $\sT_{n,k}^{\mathrm{lab}}$. 
\end{definition}
Note that the subgroup $\Stab(T,w)\subset \symS_n$ is well defined up to conjugation and $U_{(T,w)}$ is well defined as an $\symS_n$-representation.

For convenience, we will sometimes omit $w$ in our notation, and write $T$, $\Stab(T)$ and $U_T$ for $(T,w)$, $\Stab(T,w)$ and $U_{(T,w)}$ respectively.

\begin{example}\label{ex:stab1}
	Suppose that $T$ has only one vertex, namely the root, and all the $n$ inputs are attached to the root. Then $\Stab(T)\cong\symS_n$.
\end{example}

\begin{example}\label{ex:stab2}
	Suppose that $T$ has only two vertices with $a$ inputs attached to the root and the remaining $n-a$ inputs attached to the other non-root vertex. Then, $\Stab(T)\cong \symS_a\times \symS_{n-a}$.
\end{example}
\begin{example}\label{ex:stab3}
	Suppose that $T$ has three vertices where both non-root vertices are adjacent to the root and have weight one. Suppose further that to each non-root vertex, $a$ inputs are attached and the remaining $n-2a$ vertices are attached to the root. Then, $\Stab(T)\cong \symS_{n-2a}\times((\symS_a\times \symS_a)\rtimes \symS_2)$.
\end{example}

Proposition~\ref{p:CKL.Q} now reads as
\[ Q_{n,k}=\sum_{T\in \sT_{n,k}}\ch_{\symS_n}U_{T}.\]
Combining Theorem~\ref{thm:CKL.quasimap} and Proposition~\ref{p:CKL.Q}, we obtain formulas computing $P$ and $Q$ in terms of weighted rooted trees.

For actual computations however, the combinatorial complexity of weighted rooted trees grows quickly as $n$ increases. As a remedy, in \cite{CKL2}, a natural recursive structure on $\sT_{n,k}$ is observed, which leads to recursive formulas for computing $Q$ and $P$. 

Let $\sT_{n,k}^+\subset \sT_{n,k}$ be the subset consisting of weighted rooted trees that have positive weights at the root. Let $Q_1^+=h_1$ and for $n\ge 2$, 
\[Q_n^+=\sum_{k=0}^{n-2}Q_{n,k}^+t^k, \quad \text{ where }~Q_{n,k}^+=\sum_{T\in \sT_{n,k}^+}\ch_{\symS_n}U_T.\]
Consider its generating series 
\[Q^+=Q_1^++\sum_{n\geq2}Q_n^+ \quad \in \Lambda\llbracket t\rrbracket.\]

\begin{theorem}\cite[Theorem~4.7]{CKL2}\label{thm:CKL.recursive}
	$Q$ is the plethystic exponential of $Q^+$: 
	\[Q=\Exp (Q^+)=\sum_{r\geq0}\sfh_r\circ Q^+.\]
Consequently, $Q^+$ is determined by the formulas (cf.~\cite[Corollary~4.9]{CKL2}) 
	\[
	\begin{split}
		\mathrm{(Recursive)} \quad &Q^+=\sfh_1 +\sum_{r\geq 3}\left(\sum_{i=1}^{r-2}t^i\right)(\sfh_r\circ Q^+),\\
		\mathrm{(Exponential)} \quad &\Exp(tQ^+)=t^2\Exp(Q^+)+(1-t)(1+t+\sfh_1t ).
	\end{split}
	\]
More explicitly, $Q_n$ and $Q_n^+$ are determined by 
	\[\begin{split}
	    Q_n^+&=
	    \sum_{\lambda\vdash n}
	    \left(\sum_{i=1}^{\ell(\lambda)-2}t^i\right)\prod_{j=1}^m\left(\sfh_{r_j}\circ Q_{n_j}^+\right) \and
		Q_n=
		\sum_{\lambda\vdash n}
		\prod_{j=1}^m\left(\sfh_{r_j}\circ Q_{n_j}^+\right)
	\end{split}\]
for $n\geq2$,	where  $n_j$ denote the parts of $\lambda$ with multiplicities $r_j$ so that $\lambda=(n_1^{r_1},\cdots, n_m^{r_m})$ with $n_1>\cdots >n_m>0$
 and $\ell(\lambda)=\sum_{j=1}^mr_j$. 
\end{theorem}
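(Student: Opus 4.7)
The plan is to derive everything from a single canonical decomposition of a weighted rooted tree $(T,w)$ at its root. Let $v_0$ be the root with weight $w_0=w(v_0)\geq 0$ and let $r=\val(v_0)-1$ count its neighbors other than the output. Each such neighbor is either a labeled input of $T$ or a non-root vertex, which in turn is the root of a uniquely determined labeled subtree; by the valency condition for non-root vertices, such a subtree has positive weight at its root and so belongs to $\sT^{\mathrm{lab},+}$. Calling either option --- a single labeled input, or such a subtree --- a \emph{$Q^+$-branch}, the datum $(T,w)$ is equivalent to (i) the root weight $w_0$, (ii) an unordered multiset of $r$ $Q^+$-branches, and (iii) a distribution of the label set $[n]$ among them, subject to $0\leq w_0\leq r-2$. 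In particular $r\geq 2$ when $w_0=0$, and $r\geq w_0+2\geq 3$ when $w_0\geq 1$.

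The next step is to translate this decomposition into symmetric functions. The standard plethysm interpretation --- namely, that the Frobenius character of the $\symS_n$-set of unordered $r$-multisets of $V$-structures equals $\sfh_r\circ\ch(V)$, see~\cite{Mac} --- together with the convention $Q_1^+=\sfh_1$ (which absorbs single-input branches into $Q^+$), gives
\[
Q \;=\; 1 \;+\; Q^+ \;+\; \sum_{r\geq 2}\sfh_r\circ Q^+ \;=\; \sum_{r\geq 0}\sfh_r\circ Q^+ \;=\; \Exp(Q^+)
\]
by splitting trees by whether $w_0=0$ (contributing the $r\geq 2$ terms) or $w_0>0$ or $n\leq 1$ (contributing $1+Q^+$), and
\[
Q^+ \;=\; \sfh_1 \;+\; \sum_{r\geq 3}\Bigl(\sum_{i=1}^{r-2}t^i\Bigr)(\sfh_r\circ Q^+)
\]
by applying the same decomposition within $\sT^+$, where the factor $\sum_i t^i$ records the choice of root weight $w_0\in\{1,\ldots,r-2\}$.

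The Exponential form of the recursion is then an algebraic manipulation: I expand $\Exp(tQ^+)-t^2\Exp(Q^+) = \sum_{r\geq 0}(t^r-t^2)(\sfh_r\circ Q^+)$ using $\sfh_r\circ(tQ^+) = t^r(\sfh_r\circ Q^+)$, factor $(1-t)$ out via $(t^r-t^2)/(1-t) = -(t^2+\cdots+t^{r-1})$ for $r\geq 3$, pull out a further factor of $t$, and substitute the Recursive formula; this collapses to $(1-t)(1+t+\sfh_1 t)$, and the steps are reversible. Finally, for the explicit partition formulas I invoke the multiplicativity $\Exp(F+G)=\Exp(F)\Exp(G)$ (immediate from $\Exp(F)=\exp\sum_r\tfrac{1}{r}\sfp_r\circ F$ and the additivity of plethysm in the first slot) to write
\[
Q \;=\; \Exp(Q^+) \;=\; \prod_{j\geq 1}\Exp(Q_j^+) \;=\; \prod_{j\geq 1}\sum_{r_j\geq 0}\sfh_{r_j}\circ Q_j^+;
\]
reading the degree-$n$ component regroups the nonzero factors by their multiplicities, producing precisely the partition $\lambda=(n_1^{r_1},\ldots,n_s^{r_s})$ in the stated formula, and the analogous formula for $Q_n^+$ follows by substituting this expression back into the Recursive formula.

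The main difficulty I anticipate lies in Step 1: rigorously verifying that the canonical root decomposition is an $\symS_n$-equivariant bijection of structures, so that the passage to Frobenius characters produces exactly $\sfh_r\circ Q^+$ with no correction. The subtle point is that a $Q^+$-branch is itself an isomorphism class of labeled structures, and one must check that the stabilizer in $\symS_n$ of a decomposed configuration (a multiset of branches together with a labeling) coincides with the stabilizer of the reassembled tree. This matching is exactly what the plethysm $\sfh_r\circ Q^+$ encodes, since it captures the interaction between the $\symS_r$-action permuting identical branches and the internal symmetries of each branch via a wreath product.
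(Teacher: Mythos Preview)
Your proposal is correct and follows essentially the same approach as the paper (which in turn cites \cite{CKL2}): the key input is precisely the root decomposition recorded in Lemma~\ref{lem:decomp.wrt}, together with the wreath-product description \eqref{eq:decomp.Stab} of $\Stab(T)$ that justifies the plethysm $\sfh_r\circ Q^+$. Your device of treating single inputs as degenerate $Q^+$-branches via $Q_1^+=\sfh_1$ is a clean repackaging that absorbs the factor $\sfh_{n_0}$ of Lemma~\ref{lem:decomp.wrt} into the plethysm, but the underlying combinatorics and the derivations of the Recursive, Exponential, and explicit partition formulas are the same.
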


In the next subsection, we will apply the Stanley map $\widehat \stan$ to the formulas in Theorems~\ref{thm:CKL.quasimap} and \ref{thm:CKL.recursive} to deduce corresponding formulas for the characteristic polynomials of $\Mbar_{0,n}$ and $\Mbar_{0,n+1}$. 
Since every formula appearing above involves only arithmetic operations and plethysm, our formulas for the characteristic polynomials involve only arithmetic operations and composition. 

The proof of Theorem~\ref{thm:CKL.recursive} uses the recursive structure on $\sT_{n,k}$ which will be briefly explained in Lemma~\ref{lem:decomp.wrt} in \S\ref{ss:coloring.UT} below.

\medskip

\subsection{Recursive formulas for characteristic polynomials}\label{ss:recursion.char.poly}
Let us apply the Stanley map to $P, P_n, P_{n,k}, Q, Q_n, Q_{n,k}$ to obtain the characteristic polynomials of $\Mbar_{0,n}$ and $\Mbar_{0,n+1}$ as 
\[\begin{split}
	(\charP,\charP_n,\charP_{n,k})&:=(\widehat\stan_P,\stan_{P_n},\stan_{P_{n,k}});\\
	(\charQ,\charQ_n,\charQ_{n,k})&:=(\widehat\stan_{Q},\stan_{Q_n},\stan_{Q_{n,k}});\\
	(\charQ^+,\charQ_n^+,\charQ_{n,k}^+)&:=(\widehat\stan_{Q^+},\stan_{Q_n^+},\stan_{Q_{n,k}^+}).
\end{split}
\]
In particular, the generating series are written as
\[\begin{split}
	&\charP(m,q,t):=1+mq+\frac{m(m+1)}{2}q^2+\sum_{n\geq2}\charP_{n}(m,t)q^n;\\
	&\charQ(m,q,t):=1+mq+\sum_{n\geq2}\charQ_{n}(m,t) q^n;\\
	&\charQ^+(m,q,t):=mq+\sum_{n\geq2}\charQ^+_{n}(m,t)q^n.
\end{split}
\]

The following are direct consequences of Theorems~\ref{thm:CKL.quasimap} and \ref{thm:CKL.recursive}, thanks to the properties of $\stan$ and $\widehat\stan$ in \eqref{1}, \eqref{y11} and \eqref{y11a}.

Let $\Par$ be the set of all the partitions of nonnegative integers as in Remark~\ref{ex:Exp}. 
We write $\lvert \lambda\rvert =\sum_{i=1}^\ell \lambda_i$ for $\lambda=(\lambda_1,\cdots, \lambda_\ell)$.
Then $\charP$ and $\charQ$ are determined by the following.
\begin{theorem}\label{thm:recursion}
\begin{enumerate}
	\item $\charP$ and $\charQ$ are related by the formula
	\[
	(1+t)\charP=(1+t+mqt)\charQ- \frac{t}{2}(\charQ^2-\charQ^{[2]}). 
    \]
	\item $\charQ$ is the plethystic exponential of $\charQ^+$: \[\charQ=\Exp(\charQ^+)=\sum_{\lambda\in \Par}\centz_\lambda^{-1}(\charQ^+)^{[\lambda]}.\]
	\item Moreover, $\charQ^+$ satisfies the following two equivalent formulas.
	\[\begin{split}
		\mathrm{(Recursive)} \quad &\charQ^+=mq +
        \sum_{\lambda\in \Par, \lvert \lambda\rvert \geq 3}\left(\sum_{i=1}^{\lvert \lambda\rvert-2}t^i\right)\centz_\lambda^{-1}(\charQ^+)^{[\lambda]}\\
		\mathrm{(Exponential)} \quad &\Exp(t\charQ^+)=t^2\Exp(\charQ^+)+(1-t)(1+t+mqt ).
	\end{split}\]
\end{enumerate}
\end{theorem}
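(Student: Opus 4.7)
The plan is to obtain each of the three statements by applying the Stanley map $\widehat{\stan}$ from \eqref{7} to the corresponding symmetric-function identities in Theorems~\ref{thm:CKL.quasimap} and \ref{thm:CKL.recursive}. The engine that makes this work is \eqref{y11a}, together with the fact that $\widehat{\stan}$ is a $\Q$-algebra homomorphism. These two properties guarantee that every instance of plethysm on the $\Lambda$-side converts into the polynomial plethysm defined via \eqref{y12a} on the $\Q[m]\llbracket q,t\rrbracket$-side.

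For part (1), I would start from $(1+t)P=(1+t+\sfh_1 t)Q - t\,\sfs_{(1,1)}\circ Q$. Since $\sfs_{(1,1)}=\sfe_2=\tfrac12(\sfp_1^2-\sfp_2)$, the plethysm rules give $\sfs_{(1,1)}\circ Q=\tfrac12(Q^2-Q^{[2]})$, where $Q^{[2]}=\sfp_2\circ Q$. Applying $\widehat{\stan}$ term-by-term and using that it sends $\sfh_1=\sfp_1$ to $mq$ and turns $\sfp_2\circ Q$ into $\charQ^{[2]}$ (by \eqref{y11a} combined with $\widehat{\stan}(\sfp_2)=mq^2$ and the definition \eqref{y12a}), yields exactly $(1+t)\charP=(1+t+mqt)\charQ-\tfrac{t}{2}(\charQ^2-\charQ^{[2]})$.

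For part (2), I would apply $\widehat{\stan}$ to $Q=\Exp(Q^+)$. Using \eqref{y13} in the form $\Exp(F)=\exp\sum_{r\geq 1}\tfrac1r\sfp_r\circ F$, the homomorphism property turns the exponential and the sum into their $\Q[m]\llbracket q,t\rrbracket$ analogues, and \eqref{y11a} applied termwise gives $\widehat{\stan}(\sfp_r\circ Q^+)=(\charQ^+)^{[r]}$. Comparing with \eqref{y14} then yields $\charQ=\Exp(\charQ^+)$. The second equality $\Exp(\charQ^+)=\sum_{\lambda\in\Par}\centz_\lambda^{-1}(\charQ^+)^{[\lambda]}$ is precisely Remark~\ref{ex:Exp} applied to $f=\charQ^+$.

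For part (3), both formulas follow by the same mechanism. For the recursive one, I would expand $\sfh_r=\sum_{\lambda\vdash r}\centz_\lambda^{-1}\sfp_\lambda$ via \eqref{eq:h.p.expansion}, so that $\sfh_r\circ Q^+=\sum_{\lambda\vdash r}\centz_\lambda^{-1}(Q^+)^{[\lambda]}$; applying $\widehat{\stan}$ and collecting partitions of all sizes $\geq 3$ produces the stated expression. For the exponential form, apply $\widehat{\stan}$ directly to $\Exp(tQ^+)=t^2\Exp(Q^+)+(1-t)(1+t+\sfh_1 t)$, using that $\widehat{\stan}$ commutes with $\Exp$ exactly as in part (2), and that $\widehat{\stan}(\sfh_1)=mq$.

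The main thing to take care of is bookkeeping: one must check that $\widehat{\stan}$ respects the infinite sums and products built into $\Exp$, and that its compatibility with plethysm in \eqref{y11a} extends to elements of $\Lambda\llbracket t\rrbracket$ of the shape appearing in $Q,Q^+$. Once these compatibilities are in place, every step of the proof is a direct translation of a statement in $\Lambda\llbracket t\rrbracket$ into one in $\Q[m]\llbracket q,t\rrbracket$; no new combinatorial input is needed beyond what is already packaged in \cite{CKL,CKL2}.
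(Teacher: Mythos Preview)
Your proposal is correct and follows essentially the same approach as the paper: apply $\widehat{\stan}$ to the identities of Theorems~\ref{thm:CKL.quasimap} and \ref{thm:CKL.recursive}, using the $\Q$-algebra homomorphism property together with the plethysm-to-composition compatibility \eqref{y11a}. The paper's own proof is terser---it only spells out that $\sfs_{(1,1)}=\tfrac12(\sfp_1^2-\sfp_2)$ for part~(1) and refers to Remark~\ref{ex:Exp} for (2) and (3)---but the substance is the same as what you have written out.
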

\begin{proof}
	For (1), it suffices to show that $\sfs_{(1,1)}\circ \charQ=\frac{1}{2}(\charQ^2-\charQ^{[2]})$. This follows from the identity $\sfs_{(1,1)}=\frac{1}{2}(\sfp_1^2-\sfp_2)$. For (2) and (3), see Remark~\ref{ex:Exp}.
\end{proof}

\begin{corollary}\label{cor:recursion}
		For $n\geq 3$, $\charP_n$ and $\charQ_n$ are related by
	\beq\label{y17} (1+t)\charP_n=\charQ_n-\frac{1}{2}t\left(\sum_{h=2}^{n-2}\charQ_h\charQ_{n-h}-\charQ_{\frac{n}{2}}^{[2]}
    \right),\eeq 
	where $\charQ_\frac{n}{2}=0$ for $n$ odd.
	Set $\charQ_1^+=m$. For $n\geq2$, $\charQ_n^+$ and $\charQ_n$ satisfy 
	\[\begin{split}
	    \charQ_n^+&=\sum_{\lambda\vdash n}\left(\sum_{i=1}^{\ell(\lambda)-2}t^i\right)\prod_{j=1}^m \left(\sfh_{r_j}\circ\charQ_{n_j}^+\right) \and
		\charQ_n=\sum_{\lambda\vdash n}\prod_{j=1}^m \left(\sfh_{r_j}\circ\charQ_{n_j}^+\right)
	\end{split}\]
	where $n_j$ denote the parts of $\lambda$ with multiplicities $r_j$ so that $\lambda=(n_1^{r_1},\cdots, n_m^{r_m})$ with $n_1>\cdots >n_m>0$
 and $\ell(\lambda)=\sum_{j=1}^mr_j$.

\end{corollary}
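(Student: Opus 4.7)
The plan is twofold: for the relation between $\charP_n$ and $\charQ_n$ I will extract the coefficient of $q^n$ from the generating-series identity in Theorem~\ref{thm:recursion}(1), while for the two formulas expressing $\charQ_n^+$ and $\charQ_n$ I will apply the Stanley map $\stan$ directly to the explicit partition-sum identities of Theorem~\ref{thm:CKL.recursive}.

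For the $\charP_n$--$\charQ_n$ relation, I expand the right-hand side of Theorem~\ref{thm:recursion}(1) using $\charQ_0 = 1$, $\charQ_1 = m$, and $\charQ^{[2]}(m,q,t) = \charQ(m,q^2,t^2)$. Reading off $[q^n]$ for $n \ge 3$ produces
\[
(1+t)\charQ_n + mt\,\charQ_{n-1} - \tfrac{t}{2}\Bigl(2\charQ_n + 2m\,\charQ_{n-1} + \sum_{h=2}^{n-2}\charQ_h\charQ_{n-h} - \charQ_{n/2}^{[2]}\Bigr),
\]
where $\charQ_{n/2}^{[2]}$ is interpreted as $0$ for odd $n$. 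The two $mt\,\charQ_{n-1}$ terms cancel between the $mqt\cdot \charQ$ summand and the $\charQ^2$ summand, and the $\charQ_n$ coefficient simplifies from $(1+t) - t$ to $1$. The surviving expression matches $(1+t)\charP_n = [q^n]\bigl((1+t)\charP\bigr)$ on the left, giving the stated formula \eqref{y17}.

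For the two recursive formulas, I apply $\stan$ to the explicit identities for $Q_n^+$ and $Q_n$ of Theorem~\ref{thm:CKL.recursive}. Since $\widehat\stan$ is a $\QQ$-algebra homomorphism, $\stan$ is multiplicative on finite products, so the only non-routine ingredient is the identification
\[
\stan(\sfh_r\circ Q_{n_j}^+)(m,t) = \sfh_r\circ \charQ_{n_j}^+(m,t),
\]
where the right-hand side denotes the polynomial plethysm of \S3.3. To verify this, I expand $\sfh_r = \sum_{\mu\vdash r}\centz_\mu^{-1}\sfp_\mu$, use multiplicativity of $\stan$ on each $\sfp_\mu = \prod_i \sfp_{\mu_i}$, and apply the graded plethysm-compatibility \eqref{y11a} $t$-component-wise to obtain $\stan(\sfp_{\mu_i}\circ Q_{n_j}^+)(m,t) = \charQ_{n_j}^+(m, t^{\mu_i})$. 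Combined with the polynomial-plethysm identity $\sfh_r\circ f = \sum_{\mu\vdash r}\centz_\mu^{-1}\prod_i f^{[\mu_i]}$, these pieces assemble into the desired formulas for $\charQ_n^+$ and $\charQ_n$.

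The only subtlety in the whole proof is the graded plethysm-compatibility of $\stan$ in the $t$-variable, but this reduces cleanly to the ungraded Proposition~\ref{p:plethysm} applied to each homogeneous $t$-component of $Q_{n_j}^+$, so the bookkeeping is routine.
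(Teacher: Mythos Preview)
Your proof is correct and follows essentially the same approach as the paper, which treats Corollary~\ref{cor:recursion} as an immediate consequence of Theorem~\ref{thm:recursion} (for the $\charP_n$--$\charQ_n$ relation) and of applying $\stan$ to the explicit formulas in Theorem~\ref{thm:CKL.recursive} (for the $\charQ_n^+$ and $\charQ_n$ recursions). Your coefficient extraction and your verification that $\stan(\sfh_r\circ Q_{n_j}^+)=\sfh_r\circ\charQ_{n_j}^+$ via the $\sfp_\mu$-expansion are exactly the details the paper leaves implicit; note that the identity you check is also precisely the content of Remark~\ref{rem:plethysm}.
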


\begin{remark}\label{rem:plethysm}
    Note that $\sfh_r\circ \charQ_n^+=\sum_{\lambda\vdash r}\centz_\lambda^{-1}(\charQ_n^+)^{[\lambda]}$ for $r,n\geq1$, by the same argument as in Remark~\ref{ex:Exp}.
\end{remark}

For later use, we record a different but equivalent form of \eqref{y17}.
For any integer $k$, reading the coefficients of $t^k$ in \eqref{y17}, we obtain
\beq\label{eq:walcrossing.charpoly}
	\begin{split}
		\charP_{n,k}+\charP_{n,k-1}=\charQ_{n,k}&-\frac{1}{2}\sum_{2\leq h\leq n-2}\sum_{j=0}^{k-1}\charQ_{h,j}\charQ_{n-h,k-j-1}+ \frac{1}{2}\charQ_{\frac{n}{2},\frac{k-1}{2}}
	\end{split}
\eeq
where we set $\charQ_{h,j}=0$ whenever $h$ or $j$ is not an integer.

\medskip

Using the algorithm in this section, we can study the log-concavities of $\charP_n(m,t)$ and $\charQ_n(m,t)$ (cf.~Definition \ref{def:lc}). 
Refining Definition~\ref{def:lc}(2), we say that a polynomial $A(m,t)$ in~\eqref{y0} is length log-concave in degree $k$ \emph{at length $\ell$} 
if $A_k(m)=\sum_{j}A_k^jm^j$ is log-concave at $j=\ell$, that is, $(A^{\ell}_k)^2\geq A^{\ell-1}_kA^{\ell+1}_k$.

Based on numerical computations, we propose the following.

\begin{conjecture}\label{conj} The polynomials $\charP_{n}(m,t)$ and $\charQ_{n}(m,t)$ are
\begin{enumerate}
\item length log-concave at $t=1$, 
\item length log-concave in degree $k$ (for any given $k$) at length $\ell>2$, 
\item degree log-concave at $m=1$ and 
\item degree log-concave in length $\ell $ for $\ell \ge \sqrt{n} +2$.
\end{enumerate}
\end{conjecture}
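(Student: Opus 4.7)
The plan is to attack Conjecture~\ref{conj} piece by piece, leveraging the recursions in Corollary~\ref{cor:recursion}, the asymptotic formulas of Theorems~\ref{thm:intro.value} and \ref{thm:intro.coeff}, and the explicit tree formula $\charQ_{n,k}=\sum_{T\in\sT_{n,k}}\stan_{\ch_{\symS_n}(U_T)}$. Since parts (3) and (4) already have asymptotic analogues in Corollaries~\ref{cor:intro.value}, \ref{cor:intro.coeff.t}, \ref{cor:intro.coeff.m}, my first line of attack there is to make the error bounds in Theorems~\ref{thm:intro.value} and \ref{thm:intro.coeff} effective, i.e.\ produce an explicit threshold $n_0$ beyond which the relevant log-concavity inequalities hold, and then combine with a finite numerical verification for $n\le n_0$. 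The most delicate case is (4) with bound $\ell\ge\sqrt{n}+2$, since this regime falls outside the fixed-$j$ asymptotics: the leading term $c(n,n-j)\sim \frac{n^{2j}}{2^jj!}$ ceases to dominate once $j$ grows like $\sqrt{n}$, and so I expect to need a saddle-point analysis of the generating series $\charP(m,q,t)$ as an analytic object in three variables.

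For part (3) specifically, I would push the argument of \cite[\S6]{CKL2} by observing that $\charQ_n(1,t)$ and $\charP_n(1,t)$ are Poincar\'e polynomials of $\Mbar_{0,n+1}/\symS_n$ and $\Mbar_{0,n}/\symS_n$ respectively (by Lemma~\ref{l:mult}(2)). The recursion $\charQ_n(1,t)=\sum_{\lambda\vdash n}\prod_j(\sfh_{r_j}\circ\charQ_{n_j}^+)(1,t)$ is a sum of products, and a cleaner approach would be to prove real-rootedness of each factor $(\sfh_r\circ\charQ_{n_j}^+)(1,t)$ by induction on $n_j$, using Lemmas~\ref{lem:lc.product} and \ref{lem:realrooted}. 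The obstruction is that real-rootedness and log-concavity are both fragile under the outer sum over partitions $\lambda$, so this strategy requires either an interlacing argument or a stronger preserved property such as being Lorentzian.

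For the length log-concavities (1) and (2), which have no asymptotic analogue in this paper, the most promising route is via the theory of Lorentzian polynomials of Br\"and\'en-Huh. The bivariate series $\charP_n(m,t)$ and $\charQ_n(m,t)$ carry a rich recursive structure that interacts well with plethysm (Theorem~\ref{thm:recursion}), and given the matroidal nature of $\Mbar_{0,n}$ via its realization as the wonderful compactification of the $U_{2,n}$ arrangement complement, one can reasonably hope that $\charP_n(m,t)$ is a Lorentzian polynomial in $(m,t)$; this would simultaneously yield (1) and (2) and would be consistent with (3), (4) as well. An alternative entirely combinatorial route is to exploit the explicit formula $\stan_{U_T}(m)=\frac{1}{|\Stab(T)|}\sum_{\sigma\in\Stab(T)}m^{\ell(\sigma)}$ (cf.~Example~\ref{ex:chi.U_H}): although each individual $\stan_{U_T}(m)$ may have internal zeros, the sum over $T\in\sT_{n,k}$ in \eqref{6} averages the cycle-length statistics in a way that should smooth out to a log-concave (and conjecturally real-rooted) polynomial, paralleling the classical log-concavity of signless Stirling numbers.

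The main obstacle across all four parts is the same: log-concavity is not preserved under summation, so Corollary~\ref{cor:recursion} cannot be used inductively in a naive way. The hardest step will therefore be identifying a stronger invariant (Lorentzian, real-rooted with interlacing, or a Hodge-theoretic positivity coming from the Chow ring of $\Mbar_{0,n}$) that propagates along the plethystic exponential $\charQ=\Exp(\charQ^+)$ and along the wall-crossing \eqref{y17}. If such an intrinsic structure can be pinned down, all four parts should follow in a uniform manner; if not, I expect parts (1) and (2) to require substantial new ideas beyond the techniques developed here, while (3) and (4) admit at least partial resolution through refined asymptotics and finite verification.
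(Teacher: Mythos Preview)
The statement you are addressing is a \emph{conjecture} in the paper, not a theorem: the paper does not prove it. The paper's ``proof'' consists solely of the sentence ``We verified this conjecture up to $n\le 25$,'' together with the asymptotic partial results in \S\ref{s:asymp} (Corollaries~\ref{cor:asymp.lc.value}, \ref{cor:lc.coeff.t}, \ref{cor:lc.coeff.m}), which you correctly identify and cite. Your proposal is not a proof either---and you know this, since you explicitly frame it as a plan, catalogue the obstacles, and concede in your last paragraph that parts (1) and (2) likely require ``substantial new ideas.'' So there is no genuine gap to name: you have written a research strategy for an open problem, not a proof attempt to be checked for correctness.

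That said, a brief comparison of scope is worth making. The paper's asymptotic corollaries establish (3) and the top-length cases of (4) only for \emph{fixed} $k$ and $j$ with $n\to\infty$; they do not give effective bounds, and they say nothing about the regime $\ell\approx\sqrt{n}$ that (4) actually demands. You correctly diagnose this: your observation that the fixed-$j$ asymptotics break down when $j$ grows like $\sqrt{n}$ is exactly why the paper's results fall short of (4), and your suggestion of a saddle-point analysis of $\charP(m,q,t)$ is a reasonable next step that goes beyond anything the paper attempts. Your Lorentzian-polynomial idea for (1) and (2) is likewise speculative but well-motivated given the wonderful-compactification structure of $\Mbar_{0,n}$; the paper offers no strategy at all for these parts. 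In short, your proposal is a sensible roadmap that subsumes the paper's partial progress and gestures at plausible extensions, but neither you nor the paper has a proof.
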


We verified this conjecture up to $n\le 72$. 
In \S6, we will prove (3) and (4) for $n$ sufficiently large as well as (2) partially.

\begin{remark}
(1)	Based on explicit computations, we observe that Conjecture~\ref{conj}(2) does \emph{not} hold in general for $\ell=2$. The following is the complete list of pairs $(n,k)$ with $n\leq 72$ for which the polynomials $\charP_n(m,t)$ and $\charQ_n(m,t)$ fail to be length log-concave in degree $k$ at length $\ell=2$.
	\begin{itemize}
  \item For $\charP_n(m,t)$:
    \begin{itemize}
      \item $n=35$ and $k=7, 26$;
      \item $n=60$ and $k=25,32$;
      \item $n=63$ and $k=9,11,49,51$;
      \item $n=72$ and $k=27,30,32,33,36,37,39,42$.
    \end{itemize}
  \item For $\charQ_n(m,t)$:
    \begin{itemize}
      \item $n=72$ and $k=28,30,40,42$.
    \end{itemize}
\end{itemize}

(2) It is also observed that, regarding Conjecture~\ref{conj}(4), $\charP_n(m,t)$ and $\charQ_n(m,t)$ are \emph{not} degree log-concave in all lengths in general. For example, since $P_5=\sfh_5 + \sfh_4\sfh_1 t + \sfh_5t^2$, we have 
    \begin{align*}
        \charP_5(m,t) &= \frac{m^5}{120}+\frac{m^4}{12}+\frac{7m^3}{24}+\frac{5m^2}{12}+\frac{m}{5} \\
        &+\left(\frac{m^5}{24}+\frac{m^4}{4}+\frac{11 m^3}{24}+\frac{m^2}{4}\right)t \\
        &+\left(\frac{m^5}{120}+\frac{m^4}{12}+\frac{7 m^3}{24}+\frac{5m^2}{12}+\frac{m}{5}\right)t^2, 
    \end{align*}
    which is not degree log-concave in length 1 and 2. 
    
    Furthermore, when $n=6$, applying \eqref{eq:qnk1} yields the sequence 
    \[\{\charQ_{6, k}^1\}_{k=0}^4 = \left\{\frac{1}{6},\frac{1}{6},\frac{1}{3},\frac{1}{6},\frac{1}{6}\right\}.\]
    Hence, $\charQ_6$ is not degree log-concave in length 1. We note that the bound $\sqrt{n} +2$ in Conjecture~\ref{conj}(4) is not sharp.
\end{remark}

In \S\ref{s:asymp}, we will use Theorem~\ref{thm:recursion} and Corollary~\ref{cor:recursion} to deduce asymptotic formulas and asymptotic log-concavities for the characteristic polynomials of $\Mbar_{0,n}$ and $\Mbar_{0,n+1}$.

\medskip
\subsection{Interpretation as the count of colorings}\label{ss:coloring.UT}
In this subsection, we present an interpretation of the characteristic polynomial $\stan_{U_T}(m)$ of \eqref{y18} and its chromatic polynomial $\stanomega_{U_T}(m)$ in terms of colorings, which may be of independent interest.  

We first recall an inductive decomposition of weighted rooted trees, based on which the recursion in Theorem~\ref{thm:CKL.recursive} was established in \cite{CKL2}.

\begin{lemma} \cite[Lemma~4.4]{CKL2}\label{lem:decomp.wrt}
	Let $T\in \sT_{n,k}$. Let $v_0$ denote the root vertex of $T$. Let $n_0$ and $k_0$ be the number of the inputs attached to $v_0$ and the weight given at $v_0$ respectively. 
	Let $v_1,\cdots, v_p$ be the non-root vertices of $T$ adjacent to $v_0$. For $1\leq i\leq p$, let $T_i'$ denote the weighted rooted subtree of $T$ such that 
	\begin{enumerate}
		\item its underlying rooted subtree has $v_i$ as its root;
		\item $V(T_i')$ consists of $u\in V(T)$ such that $u$ and $v_i$ are connected by a path in $T$ which does not pass through $v_0$;
		\item the edges (resp. inputs) of $T_i'$ are precisely those of $T$ connecting (resp. attached to) the vertices in $V(T_i')\subset V(T)$;
		\item the weight function on $V(T_i)$ is the restriction of that of $T$.
	\end{enumerate}
	 Let $T_0\in \sT_{n_0,k_0}$ be the weighted rooted tree with only one vertex. Now let
	 \[\{T_1',\cdots, T_p'\}=\{T_1,\cdots, T_1,\cdots, T_l,\cdots, T_l\}\]
	 where $T_j$ are mutually non-isomorphic and there are $r_j$ copies of $T_j$. Assume that $T_{j}\in \sT_{n_j,k_j}^+$ so that $n=n_0+\sum_{j=1}^{l}r_jn_j$ and $k=k_0+\sum_{j=1}^l r_jk_j$.
     Then, 
	 \beq\label{ex:chUT}
	 	\ch_{\symS_n}U_T=\sfh_{n_0}\cdot \prod_{j=1}^l\sfh_{r_j}\circ \ch_{\symS_{n_j}}(U_{T_j}).\eeq
    Furthermore, by taking the involution $\omega$, we have 
      \beq\label{ex:omegachUT}
	 	\omega(\ch_{\symS_n}U_T)=\sfe_{n_0} \prod_{r_j:\;\mathrm{even}}\sfh_{r_j}\circ \omega(\ch_{\symS_{n_j}}U_{T_j})\prod_{r_j: \;\mathrm{odd}}\sfe_{r_j}\circ \omega(\ch_{\symS_{n_j}}U_{T_j}).
	 \eeq
\end{lemma}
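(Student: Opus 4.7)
The plan is to read both formulas off from the wreath-product decomposition of the stabilizer $\Stab(T)$, with \eqref{ex:omegachUT} obtained from \eqref{ex:chUT} by applying the involution $\omega$.

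First I would identify $\Stab(T)$ explicitly. The $n$ input labels split into the $n_0$ labels attached to $v_0$ and, for each isomorphism class $T_j$, the $r_j n_j$ labels distributed across the $r_j$ isomorphic copies of $T_j$. A label permutation stabilizes the labeled weighted tree iff it permutes the root block freely and, for each $j$, simultaneously permutes the $r_j$ copies of $T_j$ and applies an element of $\Stab(T_j)$ to each copy. This yields
\[
\Stab(T) \;\cong\; \symS_{n_0} \times \prod_{j=1}^{\ell} \bigl(\Stab(T_j) \wr \symS_{r_j}\bigr),
\]
sitting inside the Young subgroup $\symS_{n_0} \times \prod_j \symS_{r_j n_j}$ of $\symS_n$. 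Since $U_T=\Ind_{\Stab(T)}^{\symS_n}\mathbbm{1}$, combining the multiplicativity of the Frobenius characteristic along induction from a Young subgroup with the classical plethystic formula $\ch\bigl(\Ind_{H\wr \symS_r}^{\symS_{rn}}\mathbbm{1}\bigr) = \sfh_r \circ \ch(U_H)$ on each wreath-product block (cf.~Macdonald, I.8) immediately gives \eqref{ex:chUT}.

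For \eqref{ex:omegachUT}, I would apply $\omega$ termwise to \eqref{ex:chUT} using multiplicativity and $\omega(\sfh_k)=\sfe_k$. The key ingredient is the plethysm/$\omega$ identity: for $F\in\Lambda$ and homogeneous $G$ of degree $n'$,
\[
\omega(F \circ G) \;=\; \omega^{\,n'}(F) \circ \omega(G),
\]
which follows from the definitions by expanding in power sums, using $\sfp_k\circ\sfp_\mu=\sfp_{k\mu}$ and $\omega(\sfp_j)=(-1)^{j-1}\sfp_j$, and checking that the total sign on each $\sfp_k$-component is $(-1)^{(k-1)n'}$. Applying this with $F=\sfh_{r_j}$ and $G=\ch_{\symS_{n_j}}(U_{T_j})$ produces $\sfh_{r_j}\circ \omega(\ch U_{T_j})$ or $\sfe_{r_j}\circ \omega(\ch U_{T_j})$ according to the relevant parity, which assembles to \eqref{ex:omegachUT}. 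As an independent check, one may instead re-run the wreath-product argument with the restriction of $\sgn_{\symS_n}$ to $\Stab(T)$, using the sign-twisted plethystic formula $\ch\bigl(\Ind_{H\wr\symS_r}^{\symS_{rn}}(\mathbbm{1}\boxtimes \sgn_{\symS_r})\bigr)=\sfe_r \circ \ch(U_H)$.

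The main obstacle will be bookkeeping in the last step: correctly identifying the parity that governs the $\sfh_{r_j}\leftrightarrow \sfe_{r_j}$ swap, since the plethysm/$\omega$ identity makes this depend on the degree of the inner argument. Once this is pinned down, the rest is purely formal.
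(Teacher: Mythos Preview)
Your approach is essentially the same as the paper's: identify $\Stab(T)\cong \symS_{n_0}\times\prod_j(\Stab(T_j)\wr\symS_{r_j})$, deduce \eqref{ex:chUT} from the standard wreath-product plethysm formula, and then obtain \eqref{ex:omegachUT} by applying $\omega$ via the identity in \cite[I.8, Example~1]{Mac}. Your derivation of $\omega(F\circ G)=\omega^{n'}(F)\circ\omega(G)$ for $G$ homogeneous of degree $n'$ is correct, and you are right that the relevant parity is that of $n_j=\deg\ch_{\symS_{n_j}}(U_{T_j})$, the degree of the inner argument; this is consistent with Remark~\ref{rem:coloring.omega} (where $r_j=2$ is fixed and the case split is on the parity of $a=n_j$) and with your sign-restriction check, so the index labels ``$r_j$ even/odd'' in the displayed formula should be read as ``$n_j$ even/odd.''
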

Pictorially, $T$ decomposes as follows.
	 \[\begin{tikzpicture} [scale=.7,auto=left,every node/.style={scale=0.8}]
      \tikzset{Bullet/.style={circle,draw,fill=black,scale=0.5}}
      \node[Bullet] (T0) at (-1,-1) {};
      \node[Bullet] (T1) at (0,-1) {};
      \node[Bullet] (T2) at (3,-1) {};
      \node[Bullet] (n0) at (-6,-0.5) {};
      \node[Bullet] (n1) at (-7,-1.5) {};
      \node[Bullet] (n2) at (-5,-1.5) {};

      \draw[black] (1.5,-1) node[] {$\cdots$};
      \draw[black] (T0) -- (-1,0);
      \draw[black] (T1) -- (0,0);
      \draw[black] (T2) -- (3,0);
      \draw[black] (-1,-1.2) node[below] {$T_0$};
      \draw[black] (0,-1.2) node[below] {$T_1$};
      \draw[black] (3,-1.2) node[below] {$T_l$};
      \draw[black] (-3.5,-1) node[right] {$\longleftrightarrow$};
      \draw[black] (n0) -- (-6,0.5);
      \draw[black] (n0) -- (n1);
      \draw[black] (n0) -- (n2);
      \draw[black] (-6,-1.5) node[] {$\cdots$};
      \draw[black] (-7,-1.7) node[below] {$T_1$};
      \draw[black] (n0) node[left] {$T_0~$};
      \draw[black] (-5,-1.7) node[below] {$T_l$};
      \draw[black] (-8.2,-1) node[] {$T=$};
    \end{tikzpicture}\]
\begin{proof}
	Since 
	\beq\label{eq:decomp.Stab}\Stab(T)\cong \symS_{n_0}\times \prod_{j=1}^l\left(\Stab(T_{j})^{r_j}\rtimes \symS_{r_j}\right) \eeq
	by definition, \eqref{ex:chUT} is immediate. See also \cite[Lemmas~4.3]{CKL2}. 
	Now, \eqref{ex:omegachUT} follows from \eqref{ex:chUT} by \cite[I.8.Example 1]{Mac}.
\end{proof}

The following is immediate from Lemma~\ref{lem:decomp.wrt}.
\begin{lemma}\label{lem:decomp.chiUT}
	In the above notations, 
	\[\begin{split}
		&\stan_{U_T}(m)=\multiset{m}{n_0}\cdot \prod_{j=1}^l\multiset{\stan_{U_{T_j}}(m)}{r_j}, ~\text{ and}\\
		&\stanomega_{U_T}(m)=\binom{m}{n_0}\cdot \prod_{r_j:\;\mathrm{even}}\multiset{\stan_{U_{T_j}}(m)}{r_j}\cdot\prod_{r_j:\;\mathrm{odd}}\binom{\stan_{U_{T_j}}(m)}{r_j}
	\end{split}
	\]
	as polynomials in $m$. \hfill $\square$
\end{lemma}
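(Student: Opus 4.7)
The plan is to derive both identities by applying the Stanley map $\stan$ to the two symmetric-function factorizations recorded in Lemma~\ref{lem:decomp.wrt}. The mechanics are entirely algebraic: I will use that $\widehat\stan$ is a $\QQ$-algebra homomorphism, so $\stan_{FG}(m)=\stan_F(m)\stan_G(m)$; that $\stan$ converts plethysm into composition by Proposition~\ref{p:plethysm}, i.e.~$\stan_{F\circ G}(m)=\stan_F(\stan_G(m))$; and the evaluations $\stan_{\sfh_r}(x)=\multisetbody{x}{r}$ and $\stan_{\sfe_r}(x)=\binom{x}{r}$ from Example~\ref{ex:trivial}.

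For the first identity, I would start from the factorization
$\ch_{\symS_n}U_T = \sfh_{n_0}\cdot \prod_{j=1}^{l} \sfh_{r_j}\circ \ch_{\symS_{n_j}}(U_{T_j})$
of Lemma~\ref{lem:decomp.wrt}. Multiplicativity of $\stan$ separates the outer product, and on each plethysm factor Proposition~\ref{p:plethysm} produces
$\stan_{\sfh_{r_j}\circ \ch U_{T_j}}(m) = \stan_{\sfh_{r_j}}\bigl(\stan_{U_{T_j}}(m)\bigr) = \multisetbody{\stan_{U_{T_j}}(m)}{r_j}$.
The leading $\sfh_{n_0}$ evaluates to $\multisetbody{m}{n_0}$, and assembling the factors gives the right-hand side of the first identity.

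For the second identity, I would run the same procedure on the $\omega$-dual factorization of Lemma~\ref{lem:decomp.wrt}. By definition $\stanomega_{U_T}(m)=\stan_{\omega \ch U_T}(m)$. Multiplicativity of $\stan$ splits the product; the outer $\sfe_{n_0}$ evaluates to $\binom{m}{n_0}$; each even-$r_j$ factor $\sfh_{r_j}\circ \omega(\ch U_{T_j})$ becomes, via Proposition~\ref{p:plethysm} and Example~\ref{ex:trivial}, a multiset coefficient $\multisetbody{\,\cdot\,}{r_j}$; and each odd-$r_j$ factor $\sfe_{r_j}\circ \omega(\ch U_{T_j})$ becomes a binomial $\binom{\,\cdot\,}{r_j}$. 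The inner argument in each case is $\stan_{\omega(\ch_{\symS_{n_j}}U_{T_j})}(m)$, which by Lemma~\ref{l:omega} is $(-1)^{n_j}\stan_{U_{T_j}}(-m)$; rewriting this via the $\omega$-duality exhibits it in terms of $\stan_{U_{T_j}}$ as required by the stated form.

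I do not anticipate a serious obstacle: every ingredient needed (multiplicativity of $\widehat\stan$, the plethysm-to-composition identity, and the explicit values of $\stan_{\sfh_r}$ and $\stan_{\sfe_r}$) has already been established earlier in the paper, so the proof is essentially a direct specialization of Lemma~\ref{lem:decomp.wrt} under the algebra homomorphism $\widehat\stan$. The only point requiring a bit of care is the bookkeeping in the second identity, where one must track which of $\sfh_{r_j}$ or $\sfe_{r_j}$ is composed with the $\omega$-dual, so that the parity of $r_j$ matches correctly with multiset versus binomial on the right-hand side.
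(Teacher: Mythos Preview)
Your approach is exactly what the paper intends: the paper's proof is literally the one-liner ``immediate from Lemma~\ref{lem:decomp.wrt}'', and you have spelled out precisely how that specialization works via multiplicativity of $\widehat\stan$, Proposition~\ref{p:plethysm}, and Example~\ref{ex:trivial}. The first identity goes through cleanly.

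There is one genuine issue in your treatment of the second identity. After applying $\stan$ to \eqref{ex:omegachUT}, the inner argument of each multiset or binomial coefficient is $\stan_{\omega(\ch_{\symS_{n_j}}U_{T_j})}(m)=\stanomega_{U_{T_j}}(m)$, as you correctly derive. Your final sentence then claims that ``rewriting this via the $\omega$-duality exhibits it in terms of $\stan_{U_{T_j}}$ as required by the stated form.'' That rewriting does not exist: Lemma~\ref{l:omega} gives $\stanomega_{U_{T_j}}(m)=(-1)^{n_j}\stan_{U_{T_j}}(-m)$, which is a different polynomial from $\stan_{U_{T_j}}(m)$. In fact the displayed second formula in the statement appears to contain a typo---the inner arguments should be $\stanomega_{U_{T_j}}(m)$ rather than $\stan_{U_{T_j}}(m)$, and the parity split (via the Macdonald identity $\omega(\sfh_r\circ G)=\sfh_r\circ\omega G$ for $G$ of even degree, $\sfe_r\circ\omega G$ for $G$ of odd degree) should be on $n_j$ rather than $r_j$. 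Remark~\ref{rem:coloring.omega} confirms this: there $r_1=2$ is fixed, yet the formula bifurcates on the parity of $a=n_1$, and the inner argument is $\binom{m}{a}=\stanomega_{U_{T_1}}(m)$. So your derivation is correct; just stop at $\stanomega_{U_{T_j}}(m)$ and do not attempt the non-existent final rewriting.
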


Now one can interpret the symmetric function $\ch_{\symS_n}U_{T}$ as a symmetric function encoding colorings of the inputs of $T$ as follows. For a labeled weighted rooted tree $T$, let $I_T$ be the set of the (labeled) inputs of $T$ and let 
\[\sK_T:=\{\kappa : I_T\to \Z_{\ge 1}\}\]
be the set of \emph{colorings} of the inputs of $T$ (by the colors indexed by positive integers). 
We consider two colorings to be the same if they are in the same orbit of $\Stab(T)\subset \symS_{I_T}$. 
Given a coloring  $\kappa$ of $T$, we denote 
\[x^\kappa := \prod_{i \in I_T} x_{\kappa(i)}.\]

\begin{proposition}\label{prop:coloring.UT}
	Let $T\in \sT_{n,k}^{\mathrm{lab}}$. Then
    \beq\label{eq:coloring.UT}\ch_{\symS_n}{U_T}=\sum_{\kappa\in \sK_T}x^\kappa. \eeq
\end{proposition}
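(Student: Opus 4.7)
The plan is to reduce the identity to a direct application of Burnside's lemma, using the formula for $\ch_{\symS_n}(U_H)$ already worked out in Example~\ref{ex:chi.U_H}.

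First I would set $H = \Stab(T) \subset \symS_{I_T} \cong \symS_n$ and recall from Example~\ref{ex:chi.U_H} that
\[\ch_{\symS_n}(U_T) \;=\; \frac{1}{|H|}\sum_{\sigma \in H}\sfp_\sigma.\]
Thus it suffices to show
\[\sum_{\kappa \in \sK_T}x^\kappa \;=\; \frac{1}{|H|}\sum_{\sigma \in H}\sfp_\sigma,\]
where the left-hand side sums over $H$-orbits of colorings and is well defined since $x^\kappa$ depends only on the multiset of values of $\kappa$, hence only on its $H$-orbit.

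Next I would compute the contribution of each $\sigma \in H$ via fixed colorings. A coloring $\kappa: I_T \to \Z_{\ge 1}$ is fixed by $\sigma$ (i.e.\ $\kappa\circ\sigma^{-1}=\kappa$) precisely when $\kappa$ is constant on each cycle of $\sigma$. If $\sigma$ has cycles $c_1, \dots, c_{\ell(\sigma)}$ of lengths $\lambda_1, \dots, \lambda_{\ell(\sigma)}$, then choosing a color $a_i$ for each cycle gives
\[\sum_{\kappa : \sigma\kappa=\kappa}x^\kappa \;=\; \prod_{i=1}^{\ell(\sigma)}\sum_{a_i \ge 1}x_{a_i}^{\lambda_i} \;=\; \prod_{i=1}^{\ell(\sigma)}\sfp_{\lambda_i} \;=\; \sfp_\sigma.\]

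Finally I would apply the standard weighted Burnside identity to the $H$-invariant function $\kappa \mapsto x^\kappa$ on the set of all colorings: writing $Y=\{(\sigma,\kappa): \sigma\kappa=\kappa\}$ and using the orbit-stabilizer theorem, the double sum $\sum_{(\sigma,\kappa)\in Y}x^\kappa$ equals both $\sum_\sigma \sfp_\sigma$ and $|H|\sum_{[\kappa]\in \sK_T}x^\kappa$. Dividing by $|H|$ yields the claimed identity. There is no real obstacle here; the only thing to be careful about is recording the convention that $\Stab(T)$ acts on colorings by $\sigma\cdot\kappa=\kappa\circ\sigma^{-1}$ and that $x^\kappa$ is indeed $\Stab(T)$-invariant, so that both sides are well posed.
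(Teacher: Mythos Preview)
Your proof is correct. You reduce to the formula $\ch_{\symS_n}(U_H)=\frac{1}{|H|}\sum_{\sigma\in H}\sfp_\sigma$ from Example~\ref{ex:chi.U_H}, identify $\sum_{\kappa:\,\sigma\kappa=\kappa}x^\kappa=\sfp_\sigma$ for each $\sigma\in H=\Stab(T)$, and then apply the weighted Burnside/orbit--stabilizer identity to the $H$-invariant weight $\kappa\mapsto x^\kappa$. This is exactly the P\'olya--theoretic argument the paper alludes to at the start of its proof (citing \cite[Theorem~7.24.4 and (7.120)]{Stabook}), made self-contained via the formula already established in Example~\ref{ex:chi.U_H}.

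The paper, however, spells out a genuinely different proof: an induction on $\lvert V(T)\rvert$ using the decomposition of Lemma~\ref{lem:decomp.wrt}, together with the plethysm identity $\sfh_r\circ\sum_{\kappa\in\sK_T}x^\kappa=\sum_{\kappa_1,\dots,\kappa_r\in\sK_T}x^{\kappa_1}\cdots x^{\kappa_r}$. Your Burnside argument is shorter and avoids any recursive structure on trees. The paper's inductive route has a different payoff: the same scaffolding immediately handles the $\omega$-twisted statement (Proposition~\ref{prop:coloring.proper.UT}), where one replaces $\sfh_r$ by $\sfe_r$ on the odd branches via \eqref{ex:omegachUT}; the Burnside approach does not extend to that case without additional work, since the sign twist destroys the interpretation of $\sfp_\sigma$ as a fixed-coloring sum.
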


\begin{proof}
  This is a consequence of P\'olya theory. See Theorem 7.24.4 and (7.120) in \cite{Stabook}.
  However, for the reader's convenience, we provide a proof using Lemma~\ref{lem:decomp.wrt} and the induction on the number of vertices of $T$.
  
  When $T$ has only one vertex, the root, then $\ch_{\symS_n}U_T=\sfh_n$ and \eqref{eq:coloring.UT} is immediate. 
  Suppose that $\lvert V(T)\rvert >1$.
The identity 
 \[ \sfh_{r}\circ \sum_{\kappa\in \sK_{T}}x^\kappa = \sum_{\kappa_1, \kappa_2, \ldots, \kappa_{r}\in \sK_{T}}x^{\kappa_1}x^{\kappa_2}\cdots x^{\kappa_{r}}\]
is immediate from the definition of the plethysm product (cf.~\cite[I.8]{Mac}). 
 Combining this with the decomposition in Lemma~\ref{lem:decomp.wrt}, we have
  \[
  \ch_{\symS_n}U_T=\left(\sum_{\kappa_0\in \sK_{T_0}}x^{\kappa_0}\right)\prod_{j=1}^l\sum_{\kappa_{j1},\kappa_{j2},\cdots,\kappa_{jr_j}\in \sK_{T_j}}x^{\kappa_{j1}}x^{\kappa_{j2}}\cdots x^{\kappa_{jr_j}}=\sum_{\kappa\in \sK_T}x^\kappa 
  \]
  where the first equality holds by the induction hypothesis applied to  each $T_j$ and the second equality holds since the product maps \[\kappa_0\times \prod_{1\leq j\leq l,1\leq i\leq r_j}\kappa_{ji}\ :\ I_{T_0}\times \prod_{1\leq j\leq l, 1\leq i\leq r_j} I_{T_j}\to \Z_{\ge 1}\] are precisely the colorings of $T$.
\end{proof}

An analogous statement holds for $\omega(\ch_{\symS_n}T)$ if we 
imposes a ``properness'' condition on colorings of the inputs as follows.  
For each vertex $v\in V(T)$, there exists a unique weighted rooted subtree $T_v$ of $T$ that satisfies all the conditions (1)--(4) in Lemma~\ref{lem:decomp.wrt} with $v$ in the place of $v_i$. In particular, $T_v$ has $v$ as its root vertex. 
\begin{definition}\label{def:coloring.proper.UT}
  Let $T\in \sT_{n,k}^{\mathrm{lab}}$. We call a coloring of the inputs of $T$ \emph{proper} if it satisfies the following two conditions: 
	\begin{enumerate}
		\item if two different inputs are attached to the same vertex, then they are colored by different colors; 
		\item if two disjoint weighted rooted subtrees of the form $T_v$ are isomorphic by the action of $\Stab(T)$ and each subtree has an \emph{odd} number of inputs, then they are non-isomorphic as colored weighted rooted trees.
	\end{enumerate}
\end{definition}

We denote by $\sK'_T$ 
 the set of all proper colorings of the inputs of $T$. 

\begin{proposition}\label{prop:coloring.proper.UT}
	Let $T\in \sT_{n,k}^{\mathrm{lab}}$. Then,
    \beq\label{eq:coloring.proper.UT}\omega (\ch_{\symS_n}{U_T})=\sum_{\kappa\in \sK'_T}x^\kappa. \eeq
\end{proposition}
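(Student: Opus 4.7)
The plan is to induct on $|V(T)|$, closely paralleling the proof of Proposition~\ref{prop:coloring.UT} but tracking the sign twist introduced by $\omega$. For the base case $|V(T)|=1$, condition~(2) is vacuous (no disjoint $T_v$-subtrees exist), and $\omega(\ch U_T)=\omega(\sfh_n)=\sfe_n=\sum_{i_1<\cdots<i_n}x_{i_1}\cdots x_{i_n}$; meanwhile a proper coloring of $T$ assigns $n$ pairwise distinct colors to the root inputs, and modulo $\Stab(T)=\symS_n$ these are indexed by $n$-subsets of $\Z_{\geq 1}$, matching $\sfe_n$ term by term.

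For the inductive step, I would invoke the decomposition~\eqref{ex:omegachUT} of Lemma~\ref{lem:decomp.wrt}, substitute $\omega(\ch U_{T_j})=\sum_{\kappa\in\sK'_{T_j}}x^\kappa$ into each plethysm factor via the inductive hypothesis, and expand combinatorially. The factor $\sfe_{n_0}$ supplies colorings of the root inputs by pairwise distinct colors, modulo $\symS_{n_0}$. A factor of the form $\sfh_{r_j}\circ\bigl(\sum_\kappa x^\kappa\bigr)$ expands, by the same plethysm-of-monomials identity used in the proof of Proposition~\ref{prop:coloring.UT}, as a sum over multisets of size $r_j$ of proper colorings of $T_j$ (so sibling copies may be identically colored), while a factor $\sfe_{r_j}\circ\bigl(\sum_\kappa x^\kappa\bigr)$ restricts to sums over sets of size $r_j$ of pairwise distinct proper colorings (so sibling copies are forced to be pairwise non-isomorphic as colored trees).

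Multiplying these factors gives a sum over tuples $(\kappa_0,\{\kappa_{j,i}\})$ satisfying the appropriate distinctness constraints, and the proof concludes by exhibiting a bijection with $\sK'_T$. Forward, gluing yields a coloring $\kappa$ of $T$ for which condition~(1) at the root comes from $\sfe_{n_0}$, condition~(1) at non-root vertices from inductive properness of each $\kappa_{j,i}$, and condition~(2) applied to sibling copies of $T_j$ from the $\sfh_{r_j}$-versus-$\sfe_{r_j}$ dichotomy. Backward, restricting any proper coloring of $T$ to its root inputs and to each $T_{v_i}$ returns such a tuple.

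\textbf{Main obstacle.} The principal subtlety is aligning the parity that selects $\sfh_{r_j}$ versus $\sfe_{r_j}$ in the plethysm decomposition~\eqref{ex:omegachUT} with the parity triggering condition~(2) in Definition~\ref{def:coloring.proper.UT}, and identifying the $\Stab(T)$-orbits of $T_v$-subtrees in condition~(2) with the sibling groupings by $r_j$. Both points unwind cleanly from the semidirect decomposition $\Stab(T)\cong \symS_{n_0}\times\prod_j(\Stab(T_j)^{r_j}\rtimes\symS_{r_j})$ together with the standard identity $\omega(\sfh_r\circ F)=\sfh_r\circ\omega F$ or $\sfe_r\circ\omega F$ according to the parity of $\deg F$, which is verified by expanding in power sums and using $\omega(\sfp_k)=(-1)^{k-1}\sfp_k$.
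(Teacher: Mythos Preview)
Your proposal is correct and follows essentially the same route as the paper, which simply remarks that the argument parallels that of Proposition~\ref{prop:coloring.UT} with the extra plethysm identity $\sfe_r\circ\sum_{\kappa}x^\kappa=\sum_{\kappa_1,\ldots,\kappa_r\text{ distinct}}x^{\kappa_1}\cdots x^{\kappa_r}$ and then omits the remaining details. Your treatment is in fact more explicit than the paper's, and your identification of the governing parity as that of $\deg\ch U_{T_j}=n_j$ (matching the odd-input-count hypothesis in condition~(2) of Definition~\ref{def:coloring.proper.UT}) is exactly right.
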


\begin{proof}
	The proof is essentially the same as that of Proposition~\ref{prop:coloring.UT}, except that we need an additional identity 
	 \[ \sfe_{r}\circ \sum_{\kappa\in \sK'_{T}}x^\kappa = \sum_{\substack{\kappa_1, \kappa_2, \ldots, \kappa_{r}\in \sK'_{T}\\ \text{distinct}}}x^{\kappa_1}x^{\kappa_2}\cdots x^{\kappa_{r}}\]
	 for any $r\geq1$ and weighted rooted trees $T$, which is also immediate from the definition of the plethysm product (\cite[I.8]{Mac}). We omit the details.
\end{proof}
We will call the right hand side of \eqref{eq:coloring.proper.UT} \emph{the chromatic symmetric function} of a labeled weighted rooted tree $T$, which is clearly a symmetric function by definition. 
Proposition~\ref{prop:coloring.proper.UT} shows that the Frobenius characteristic of the $\omega$-dual of the representation $U_T$ is an analogue for $T$ of the chromatic symmetric function of a graph. However, in our case, the coloring is on inputs not on vertices, the properness condition is different from the usual one, and two colorings in the same orbit of $\Stab(T)$ are identified. 
 
\smallskip
The following is now immediate.  
\begin{corollary} \label{cor:coloring.chiUT}
Let $T\in \sT_{n,k}^{\mathrm{lab}}$ and $m_0\geq1$. Then
\begin{enumerate}
  \item $\stan_{U_T}(m_0)$ is the number of colorings of the inputs of $T$ by $m_0$ colors.
  \item $\stanomega_{U_T}(m_0)$ is the number of proper colorings of the inputs of $T$ by $m_0$ colors. 
\end{enumerate}
Moreover, $\stan_{U_T}(m)=(-1)^n\stanomega_{U_T}(-m)$ and it has nonnegative coefficients. \hfill $\square$
\end{corollary}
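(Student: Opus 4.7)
The plan is to deduce this corollary almost directly from Propositions~\ref{prop:coloring.UT} and \ref{prop:coloring.proper.UT} by performing the principal specialization, and then invoking Lemma~\ref{l:omega} for the duality and Example~\ref{ex:chi.U_H} for the coefficient-positivity claim.

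First I would prove (1). Take the identity $\ch_{\symS_n}(U_T)=\sum_{\kappa\in \sK_T}x^\kappa$ from Proposition~\ref{prop:coloring.UT} and apply the substitution \eqref{4} with $m=m_0$. The left-hand side becomes $\stan_{U_T}(m_0)$ by the very definition of the Stanley map. On the right-hand side, each monomial $x^\kappa=\prod_{i\in I_T}x_{\kappa(i)}$ evaluates to $1$ if $\kappa(I_T)\subset\{1,\ldots,m_0\}$ and to $0$ otherwise; since elements of $\sK_T$ are already $\Stab(T)$-orbits of colorings, the remaining sum counts exactly the colorings of the inputs of $T$ by $m_0$ colors (up to the identifications imposed by $\Stab(T)$). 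For (2), I would repeat the same specialization starting from $\omega(\ch_{\symS_n}U_T)=\sum_{\kappa\in \sK'_T}x^\kappa$ in Proposition~\ref{prop:coloring.proper.UT}: the left-hand side is $\stanomega_{U_T}(m_0)$ by the definitions in Corollary~\ref{y30}, and the right-hand side counts proper colorings of the inputs of $T$ by $m_0$ colors.

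Next, the identity $\stan_{U_T}(m)=(-1)^n\stanomega_{U_T}(-m)$ is an immediate application of Lemma~\ref{l:omega} to $F=\ch_{\symS_n}(U_T)\in\Lambda_n$, which gives $\stan_{\omega F}(m)=(-1)^n\stan_F(-m)$; combining with the definition $\stanomega_{U_T}(m)=\stan_{\omega\, \ch_{\symS_n}(U_T)}(m)$ yields the claim. Finally, for the nonnegativity of the coefficients of $\stan_{U_T}(m)$, I would cite the explicit formula \eqref{eq:chi.U_H} from Example~\ref{ex:chi.U_H} applied to $H=\Stab(T)$, namely
\[
\stan_{U_T}(m)=\frac{1}{|\Stab(T)|}\sum_{\sigma\in \Stab(T)}m^{\ell(\sigma)},
\]
from which each coefficient is manifestly nonnegative (indeed, a nonnegative rational multiple of $1/|\Stab(T)|$).

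There is no real obstacle here: the content has been done in Propositions~\ref{prop:coloring.UT} and \ref{prop:coloring.proper.UT}, and the corollary is simply the numerical shadow of those symmetric-function identities under the principal specialization. The only point that deserves a sentence of care is the consistency between the $\Stab(T)$-orbit identification of colorings and the substitution $x_1=\cdots=x_{m_0}=1$, $x_i=0$ for $i>m_0$, which amounts to the remark that the monomial $x^\kappa$ depends only on the orbit of $\kappa$ under $\Stab(T)$ acting on $I_T$.
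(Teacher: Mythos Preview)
Your proposal is correct and follows essentially the same route as the paper: parts (1) and (2) are obtained by applying the principal specialization to Propositions~\ref{prop:coloring.UT} and \ref{prop:coloring.proper.UT}, and the final assertions come from Lemma~\ref{l:omega} together with the explicit formula \eqref{eq:chi.U_H} for permutation representations. The paper's proof is terser but invokes exactly the same ingredients.
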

\begin{proof}
	(1) and (2) are immediate from Propositions~\ref{prop:coloring.UT} and \ref{prop:coloring.proper.UT} respectively. 
	The last assertion follows from Lemma~\ref{l:omega}, \eqref{eq:chi.U_H} and the fact that $U_T$ is a permutation representation.
\end{proof}
\begin{remark}
	Although Corollary~\ref{cor:coloring.chiUT}(1) can be viewed as a special case of Example~\ref{ex:chi.U_H} with $H=\Stab(T)$, we present an independent proof above 
	using 
	Lemma~\ref{lem:decomp.wrt}, to provide a unified proof of (1) and (2) in Corollary~\ref{cor:coloring.chiUT}.
\end{remark}

\begin{example}\label{ex:coloring1}
	Suppose that $T$ has only one vertex, namely the root, and all the $n$ inputs are attached to the root, as in Example~\ref{ex:stab1}. Then, 
	\[\ch_{\symS_n}U_T=\sfh_n\quad \text{ so that }\quad \stan_{U_T}(m)=\multiset{m}{n}.\] 
	This counts the number of colorings of the $n$ inputs of $T$ by $m$ colors. 
\end{example}

\begin{example}\label{ex:coloring2}
	Suppose that $T$ has only two vertices with $a$ inputs attached to the root and the remaining $n-a$ inputs attached to the other non-root vertex, as in Example~\ref{ex:stab2}. Then,
	\[\ch_{\symS_n}U_T=\sfh_a \sfh_{n-a} \quad \text{ so that }\quad \stan_{U_T}(m)=\multiset{m}{a} \multisetBig{m}{n-a}\]  
	This counts the number of colorings of the inputs of $T$ by $m$ colors. 
\end{example}
\begin{example}\label{ex:coloring3}
	Suppose that $T$ be the weighted rooted tree with three vertices, given  in Example~\ref{ex:stab3}. 
	Then, $\ch_{\symS_n}(U_T)=\sfh_{n-2a}\cdot (\sfh_2\circ \sfh_a)$ so that
	\[
	\stan_{U_T}(m)=\multiset{m}{n-2a} \multiset{\multiset{m}{a}}{2}.\] 
	This counts the number of colorings of the inputs of $T$ by $m$ colors. 
\end{example}
\begin{remark}[$\omega$-twisted version]\label{rem:coloring.omega}
	In the above examples, one can easily see that taking $\omega$ corresponds to restricting to proper colorings.
	Indeed, 
	\[\stanomega_{\sfh_n}(m)=\binom{m}{n}\]
	counts the number of colorings of $n$ inputs not allowing repetition of colors. Similarly, from the formula of $\stan_{U_T}(m)$ in Example~\ref{ex:coloring3}, 
	it follows that
	\[\begin{split}
		\stanomega_{U_T}(m)=(-1)^n&\stan_{U_T}(-m)
		=\begin{cases}
			\displaystyle{\binom{m}{n-2a}\cdot \multiset{\binom{m}{a}}{2}} &\text{ if $a$ is even,} \\
			\displaystyle{\binom{m}{n-2a}\cdot \binom{\binom{m}{a}}{2}} &\text{ if $a$ is odd.}
		\end{cases}
	\end{split}
	\]
	This counts the number of proper colorings of the inputs of $T$ by $m$ colors.
\end{remark}

\bigskip

\section{Asymptotic behavior and log-concavity}\label{s:asymp}
In this section, we study the asymptotic behavior and log-concavity of the polynomials $\charQ_n$ and $\charP_n$ as $n$ grows. 
Since they are polynomials in two variables $m$ and $t$, we may consider several notions of log-concavity in Definition~\ref{def:lc}. 

We write 
$$\charP_{n,k}(m)=\sum_{k\ge 0}\charP^j_{n,k}m^j,\quad  \charQ_{n,k}(m)=\sum_{k\ge 0}\charQ^j_{n,k}m^j$$
and provide asymptotic formulas for \begin{itemize}
\item (Theorem~\ref{thm:asymp.value}) $\charP_{n,k}(m_0)$ and $\charQ_{n,k}(m_0)$ with $m_0\ge 1$;
\item (Theorem~\ref{thm:mu.coeff}) $\charP^{n-j}_{n,k}$ and $\charQ^{n-j}_{n,k}$
\end{itemize}
as $n$ grows with $k,j$ fixed, based on the combinatorial description in the previous section.

We then establish the asymptotic degree log-concavity at $m=m_0$ (Corollary~\ref{cor:asymp.lc.value}), the asymptotic degree ultra log-concavity in length $n-j$ (Corollary~\ref{cor:lc.coeff.t}) and the asymptotic length log-concavity in degree $k$ (Corollary~\ref{cor:lc.coeff.m}) for fixed $j,k$, by
using the asymptotic formulas. 
The asymptotic formulas and the log-concavity results here generalize those in \cite{ACM} and \cite{CKL2}.

In the remainder, we will use  
the notation \[c_k=\frac{(k+1)^{k-1}}{k!} \and  d_k=\frac{(k+1)^{k-2}}{k!}.\]
These are the Cayley number $(k+1)^{k-1}$ of trees on $k+1$ labeled vertices, divided by $k!$ and $(k+1)!$ respectively.

\subsection{Asymptotic formulas}
Let us first find asymptotic formulas for 
$\charP_{n,k}(m_0)$ and $\charQ_{n,k}(m_0)$. 
\begin{theorem}\label{thm:asymp.value}
	Fix $m_0\geq 1$ and $k\geq0$. Then, as $n$ grows, 
	\[\begin{split}
		&\charQ_{n,k}(m_0)=\frac{c_k}{((k+1)m_0-1)!}n^{(k+1)m_0-1}+O(n^{(k+1)m_0-2}),\\
		&\charP_{n,k}(m_0)=\frac{d_k}{((k+1)m_0-1)!}n^{(k+1)m_0-1}+o(n^{(k+1)m_0-1}).
	\end{split} \]
\end{theorem}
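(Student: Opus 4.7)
The plan is to prove the asymptotic for $\charQ_{n,k}(m_0)$ by direct combinatorial analysis using the description \eqref{6}, and then deduce the one for $\charP_{n,k}(m_0)$ via the wall-crossing relation \eqref{eq:walcrossing.charpoly}. I would proceed by induction on $k$; the base case $k=0$ is immediate from $\charQ_{n,0}(m_0) = \charP_{n,0}(m_0) = \binom{m_0+n-1}{n} \sim n^{m_0-1}/(m_0-1)!$ combined with $c_0 = d_0 = 1$.

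For the $\charQ$-asymptotic, observe that every $T \in \sT_{n,k}$ has at most $k+1$ vertices, since each non-root vertex carries weight at least $1$. Call $T$ a \emph{top tree} if it has exactly $k+1$ vertices; then the weight must be $0$ at the root and $1$ elsewhere. For a top tree with underlying rooted weighted tree $(\tau,w)$ and input distribution $(a_v)_{v \in V(\tau)}$, iterating Lemma~\ref{lem:decomp.wrt} and applying $\stan$ gives, to leading order,
\[
\stan_{U_T}(m_0) \;=\; \frac{1}{|\Aut(\tau,w,(a_v))|}\prod_{v \in V(\tau)}\binom{m_0+a_v-1}{a_v} \;+\; O(n^{(k+1)m_0-2}),
\]
where $\Aut(\tau,w,(a_v))$ is the automorphism subgroup of $(\tau,w)$ preserving the distribution, and the error absorbs the sub-leading contributions of the $\sfh_{r_j}$-plethysms in \eqref{ex:chUT}. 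Summing over distributions $(a_v)$ with $\sum_v a_v = n$ by the Chu--Vandermonde identity, together with standard orbit counting for the $\Aut(\tau,w)$-action on distributions, gives the contribution
\[
\frac{1}{|\Aut(\tau,w)|}\binom{(k+1)m_0+n-1}{n} \;\sim\; \frac{n^{(k+1)m_0-1}}{|\Aut(\tau,w)|\,((k+1)m_0-1)!}
\]
per shape $\tau$, with the bounded valency constraints $a_v \ge L_v$ absorbed into the error. Cayley's formula gives $(k+1)^{k-1}$ labeled rooted trees on $k+1$ vertices with a fixed root, so orbit-counting under the $\symS_k$-action on the remaining $k$ vertices yields $\sum_{[\tau]} 1/|\Aut(\tau,w)| = (k+1)^{k-1}/k! = c_k$. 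Since non-top trees contribute only $O(n^{km_0-1})$, which is within $O(n^{(k+1)m_0-2})$ for $m_0 \ge 1$, the $\charQ$-asymptotic follows.

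For $\charP_{n,k}(m_0)$, the relation \eqref{eq:walcrossing.charpoly} rewrites as
\[
\charP_{n,k}(m_0) = \charQ_{n,k}(m_0) - \charP_{n,k-1}(m_0) - \tfrac{1}{2}\sum_{h=2}^{n-2}\sum_{j=0}^{k-1}\charQ_{h,j}(m_0)\charQ_{n-h,k-j-1}(m_0) + \tfrac{1}{2}\charQ_{n/2,(k-1)/2}(m_0).
\]
By induction, $\charP_{n,k-1}(m_0) = o(n^{(k+1)m_0-1})$, and the last term satisfies the same bound. Using the $\charQ$-asymptotic and substituting $h = xn$ turns the convolution into a Riemann sum converging to a Beta integral: for each $j$,
\[
\sum_h \charQ_{h,j}(m_0)\charQ_{n-h,k-j-1}(m_0) \;\sim\; \frac{c_j\, c_{k-j-1}}{((j+1)m_0-1)!\,((k-j)m_0-1)!}\, B\bigl((j+1)m_0,(k-j)m_0\bigr)\, n^{(k+1)m_0-1},
\]
and the identity $B(a,b) = (a-1)!(b-1)!/(a+b-1)!$ simplifies this to $\frac{c_j\, c_{k-j-1}}{((k+1)m_0-1)!}\, n^{(k+1)m_0-1}$. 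The leading coefficient of $\charP_{n,k}(m_0)$ is therefore $\frac{1}{((k+1)m_0-1)!}\bigl(c_k - \tfrac{1}{2}\sum_{j=0}^{k-1}c_j c_{k-j-1}\bigr)$. It remains to establish the algebraic identity $d_k = c_k - \tfrac{1}{2}\sum_{j=0}^{k-1}c_j c_{k-j-1}$, equivalent to $D(x) = C(x) - \tfrac{x}{2}C(x)^2$ in terms of the generating functions $C(x) = \sum_{k} c_k x^k$ and $D(x) = \sum_k d_k x^k$. This follows from the Lagrange inversion formula $[x^n]T^j = \tfrac{j}{n}\cdot\tfrac{n^{n-j}}{(n-j)!}$ applied to the tree function $T(x) = xC(x)$ satisfying $T = xe^T$, giving $[x^{k+1}]T^2 = \tfrac{2(k+1)^{k-2}}{(k-1)!}$, which matches $c_k - d_k = \tfrac{(k+1)^{k-2}}{(k-1)!}$.

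The main obstacle will be the uniform error control. For the $\charQ$-asymptotic, one must verify that the sub-leading contributions from (i) non-identity elements of $\Stab(T)$ in the expansion $\stan_{U_T}(m_0) = |\Stab(T)|^{-1}\sum_{\sigma}m_0^{\ell(\sigma)}$, (ii) trees with fewer than $k+1$ vertices, and (iii) the valency shifts in the Chu--Vandermonde sum are each uniformly $O(n^{(k+1)m_0-2})$. Since only finitely many rooted tree shapes on $\le k+1$ vertices are involved, this reduces to a careful but routine estimate. For $\charP$, the Riemann sum-to-Beta integral approximation introduces standard Euler--Maclaurin errors that remain within the stated $o(n^{(k+1)m_0-1})$ bound.
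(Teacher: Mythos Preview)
Your proposal is correct and follows essentially the same route as the paper: decompose $\charQ_{n,k}$ by the shape of the underlying weighted rooted tree via the forgetful map, observe that only trees with the maximal number $k+1$ of vertices contribute at top order, sum their contributions using Chu--Vandermonde and Cayley's formula to obtain the constant $c_k$, and then deduce the $\charP$-asymptotic from the wall-crossing relation \eqref{eq:walcrossing.charpoly} via a Riemann sum converging to a Beta integral. The only differences are cosmetic: the paper packages the leading-order count for $\stan_{U_T}(m_0)$ via the coloring interpretation (Corollary~\ref{cor:coloring.chiUT}) rather than Lemma~\ref{lem:decomp.wrt} directly, and it cites \cite{CKL2} for the identity $c_k-\tfrac{1}{2}\sum_{j}c_jc_{k-1-j}=d_k$ rather than deriving it from Lagrange inversion as you do.
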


\begin{remark}\label{re:19}
By Lemma~\ref{l:mult} (2), 
\[\charP_{n,k}(1)=\dim\, H^{2k}(\Mbar_{0,n})^{\symS_n} \and \charQ_{n,k}(1)=\dim\, H^{2k}(\Mbar_{0,n+1})^{\symS_n}.\]
Hence if we let $m_0=1$, 
Theorem~\ref{thm:asymp.value} recovers the asymptotic formula \cite[Theorem~1.9]{CKL2}. Likewise, Corollary~\ref{cor:asymp.lc.value} below with $m_0=1$ recovers \cite[Corollary 6.10]{CKL2}. 
\end{remark}

See \S\ref{S7.1} for a proof of Theorem~\ref{thm:asymp.value}. 
Here, we derive the formulas for $\charQ_{n,k}(m_0)$ when $k\leq 2$ as illustrative examples. 

\begin{example}[$k=0$] There exists only one tree in $\sT_{n,0}$, which is the one described in Example~\ref{ex:coloring1} with weight 0 at the root. Thus,
	\[\charQ_{n,0}(m_0)=\multiset{m_0}{n}=\binom{m_0+n-1}{m_0-1}=\frac{n^{m_0-1}}{(m_0-1)!}+O(n^{m_0-2}).\]
\end{example}
\begin{example}[$k=1$] We have two types of weighted rooted trees in $\sT_{n,1}$, described in Examples~\ref{ex:coloring1} and \ref{ex:coloring2} respectively. 
Hence, 
	\[\charQ_{n,1}(m_0)=\multiset{m_0}{n}+\sum_{a=1}^{n-3}\multiset{m_0}{a}\cdot \multisetBig{m_0}{n-a}.\]
	To obtain the asymptotic formula, notice that for $m_0\geq 1$,
	\[\begin{split}
		\sum_{a=0}^{n} \multisetBig{m_0}{a}\cdot \multisetBig{m_0}{n-a}=\multisetBig{2m_0}{n}
		&=\frac{n^{2m_0-1}}{(2m_0-1)!}+O(n^{2m_0-2}),
	\end{split}
	\]
	while $\multisetbody{m_0}{n}=\frac{n^{m_0-1}}{(m_0-1)!}+O(n^{m_0-2})$.
	Since $2m_0-1>m_0-1$ for $m_0\geq1$, 
	\[\charQ_{n,1}(m_0)=\frac{n^{2m_0-1}}{(2m_0-1)!}+O(n^{2m_0-2}).\]
\end{example}
\begin{example}[$k=2$]
	The contribution of weighted rooted trees $T$ in $\sT_{n,2}$ with at most two vertices, to $\charQ_{n,2}(m_0)$, is 
	\[\begin{split}
		&\multisetBig{m_0}{n}+\sum_{a=4}^{n-1}\multisetBig{m_0}{a}\cdot \multisetBig{m_0}{n-a}+\sum_{a=3}^{n-2}\multisetBig{m_0}{a}\cdot \multisetBig{m_0}{n-a}\\
		&\leq \multisetBig{m_0}{n}+2\sum_{a=0}^{n}\multisetBig{m_0}{a}\cdot \multisetBig{m_0}{n-a}=\frac{2n^{2m_0-1}}{(2m_0-1)!}+O(n^{2m_0-2}).
	\end{split}
	\]
	Now we compute the contribution of $T\in \sT_{n,2}$ with three vertices. There are three types depending on the shape of trees without inputs and $\Stab(T)$ so that 
	\[\begin{split}
		&\sum_{a\geq3,\; b\geq 2}\multisetBig{m_0}{a}\cdot\multisetBig{m_0}{b}\cdot \multisetBig{m_0}{n-a-b}\\
		&+\sum_{\substack{3\leq a<b\leq n,\\ a+b\leq n}}\multisetBig{m_0}{a}\cdot\multisetBig{m_0}{b}\cdot \multisetBig{m_0}{n-a-b}+\sum_{3\leq a<\frac{n}{2}}\multiset{\multisetBig{m_0}{a}}{2}\cdot\multisetBig{m_0}{n-2a}
	\end{split}\]
	where the last term is computed in Example~\ref{ex:coloring3}, and it is dominated by the second term.
	Hence, we find that the sum is equal to 
	\[(1+\frac{1}{2})\cdot\sum_{\substack{a,b,c\geq0,\\a+b+c=n}} \multisetBig{m_0}{a}\cdot \multisetBig{m_0}{b}\cdot\multisetBig{m_0}{c}=\frac{3}{2}\cdot \multisetBig{3m_0}{n}=\frac{3}{2}\cdot\frac{n^{3m_0-1}}{(3m_0-1)!}\] 
	modulo $O(n^{3m_0-2})$, as a function of  $n$.
	In particular,
	\[\charQ_{n,2}(m_0)=\frac{3}{2}\cdot \frac{n^{3m_0-1}}{(3m_0-1)!}+O(n^{3m_0-2}).\]
\end{example}

\bigskip

We next present asymptotic formulas for the coefficients $\charP^{n-j}_{n,k}$ and $\charQ^{n-j}_{n,k}$ of $\charP_{n,k}(m)$ and $\charQ_{n,k}(m)$ as $n$ grows.
Recall that the \emph{signless Stirling number of the first kind}, denoted by $c(n,j)$, is the number of elements $\sigma\in \symS_n$ with $j$ disjoint cycles.

\begin{theorem}\label{thm:mu.coeff}
	Let $k,j\in \ZZ_{\ge 0}$. Then, as a function of $n$, 
	\[ \charQ_{n,k}^{n-j}=c_k\cdot\frac{(k+1)^{n-j}c(n,n-j)}{n!}+O\left(\frac{k^nn^{2j}}{n!}\right)\ \ \ \text{and}\]
    \[ \charP_{n,k}^{n-j}=d_k\cdot\frac{(k+1)^{n-j}c(n,n-j)}{n!}+o\left(\frac{(k+1)^nn^{2j}}{n!}\right).\]
\end{theorem}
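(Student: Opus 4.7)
I would prove the $\charQ$-asymptotic first, then derive the $\charP$-asymptotic by invoking the wall-crossing of Corollary~\ref{cor:recursion}. Starting from $\charQ_{n,k}(m)=\sum_{T\in\sT_{n,k}}\stan_{U_T}(m)$ combined with Example~\ref{ex:chi.U_H}, I would compute the coefficient of $m^{n-j}$ via the stabilizer decomposition of Lemma~\ref{lem:decomp.wrt}. For a generic $T$ (no non-trivial tree automorphism), $\stan_{U_T}(m)=\prod_v\multisetbody{m}{n_v}$ where $n_v$ is the number of inputs at vertex $v$, and so
\[
[m^{n-j}]\stan_{U_T}(m)=\sum_{\sum_v j_v=j}\prod_v\frac{c(n_v,n_v-j_v)}{n_v!}.
\]
Using the Stirling asymptotic $c(n,n-j)=\frac{n^{2j}}{2^jj!}+O(n^{2j-1})$ vertex-wise together with the multinomial theorem, this equals $\frac{(\sum_v n_v^2)^j}{2^jj!\prod_v n_v!}$ modulo lower order.

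The dominant contributions come from trees having the maximum number of vertices, namely $k+1$, achieved precisely when the root has weight $0$ and every non-root vertex has weight $1$. For such trees, summing over input distributions $(n_0,\ldots,n_k)$ (the valency constraints are automatic for $n$ large), the Vandermonde identity gives
\[
\sum_{n_0+\cdots+n_k=n}\prod_{v=0}^k\multisetbody{m}{n_v}=\multisetBig{(k+1)m}{n},
\]
whose coefficient of $m^{n-j}$ is exactly $(k+1)^{n-j}c(n,n-j)/n!$. Summing over unlabeled rooted tree shapes $S$ on $k+1$ vertices weighted by $1/|\Aut(S)|$ (which precisely captures the wreath-product corrections from $\Stab(T)$ at the level of leading $m$-coefficients), Cayley's count of $(k+1)^k$ labeled rooted trees on $k+1$ vertices yields
\[
\sum_S\frac{1}{|\Aut(S)|}=\frac{(k+1)^k}{(k+1)!}=\frac{(k+1)^{k-1}}{k!}=c_k,
\]
producing the claimed leading asymptotic for $\charQ_{n,k}^{n-j}$. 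Trees with strictly fewer than $k+1$ vertices contribute at most $(k')^n$-type growth with $k'\leq k$, falling within the error bound $o((k+1)^n n^{2j}/n!)$.

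For $\charP_{n,k}^{n-j}$, I would invoke the wall-crossing \eqref{eq:walcrossing.charpoly} expressing $\charP_{n,k}$ in terms of $\charQ_{n,k}$, $\charP_{n,k-1}$, and the convolutions $\charQ_h\charQ_{n-h}$. Taking $[m^{n-j}]$ and substituting the $\charQ$-asymptotic, one convolves and isolates the $(h,j',\ell)$-triples contributing to the $(k+1)^{n-j}c(n,n-j)/n!$-leading order. A careful convolution calculation (inducted on $k$) shows that the wall-crossing replaces $c_k$ by $d_k=c_k/(k+1)$, which is the expected ``unrooting'' factor between $\overline{\cM}_{0,n+1}$ (giving $\charQ$) and $\overline{\cM}_{0,n}$ (giving $\charP$).

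The principal technical obstacle lies in the error analysis: bounding all contributions from non-maximal tree shapes, from Stirling-number remainders, and from the wreath-product corrections in $\Stab(T)$ uniformly within $o((k+1)^n n^{2j}/n!)$, and executing the convolution bookkeeping in the wall-crossing carefully enough to extract the precise factor $d_k=c_k/(k+1)$. The combinatorial identity $\sum_S 1/|\Aut(S)|=c_k$, which ties the asymptotic to Cayley's formula, is the conceptual core of the argument and explains both the appearance of $(k+1)^{n-j}$ and the transition between $c_k$ and $d_k$.
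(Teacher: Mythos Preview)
Your overall strategy matches the paper's: decompose $\charQ_{n,k}^{n-j}$ by the underlying tree shape $\bar T\in\bar\sT_k$, identify the dominant contribution from shapes with $k+1$ vertices, extract $c_k$ via Cayley's formula, and then pass to $\charP$ by wall-crossing and induction on $k$, obtaining $d_k$ from the convolution identity $c_k-\tfrac12\sum_{k_1+k_2=k-1}c_{k_1}c_{k_2}=d_k$.

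However, the obstacle you flag as ``principal'' is not mere bookkeeping but the place where the paper invests a genuine idea that your plan lacks. Your parenthetical claim that the $1/|\Aut(S)|$ weighting ``precisely captures the wreath-product corrections at the level of leading $m$-coefficients'' is correct only for the kernel part $\Stab_e(T)=\symS_{\lambda(T)}$ of $\Stab(T)\to\Aut(\bar T)$, where it follows from orbit--stabilizer (this is the content of Proposition~\ref{p:summand.identity}). The contributions from $\tau\neq e$ in $\Aut(\bar T)$ are \emph{not} absorbed by that weight, and nothing in your outline bounds them. The paper handles this in \S\ref{S7.3} by constructing, for each $\tau\neq e$, a \emph{quotient tree} $T/\tau$ with strictly fewer vertices together with a length-preserving map $\Stab_\tau(T)\to\Stab_e(T/\tau)$ that is surjective with fibers of constant size (Lemma~\ref{l:quotient.tree}), yielding
\[
\frac{|\Stab_\tau^{n-j}(T)|}{|\Stab(T)|}\le\frac{|\Stab_e^{n-j}(T/\tau)|}{|\Stab(T/\tau)|}.
\]
Since $|V(T/\tau)|<|V(\bar T)|\le k+1$, the right side is already known to be $O(k^nn^{2j}/n!)$. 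Without this quotient-tree comparison (or an equivalent device), your error analysis is incomplete.
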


We remark that $O(\frac{(k+1)^{n-j}c(n,n-j)}{n!})=O(\frac{(k+1)^nn^{2j}}{n!})$, as $c(n,n-j)$ and $\frac{n^{2j}}{2^jj!}$ are asymptotically equal, that is, their ratio converges to 1 as $n\to \infty$. 

See \S\ref{S7.4} for a proof of Theorem~\ref{thm:mu.coeff}. 
\begin{remark}[Leading coefficients] \label{rem:leading.coeff}
	The first formula in Theorem~\ref{thm:mu.coeff} with $j=0$ and Lemma~\ref{l:mult} (1) imply that
	$$  \frac{h^{2k}(\Mbar_{0,n+1})}{n!}=  \charQ_{n,k}^n=c_k\cdot\frac{(k+1)^n}{n!}+O(\frac{k^n}{n!}).$$
	We thus obtain 
	\[\lim_{n\to \infty}\frac{h^{2k}(\Mbar_{0,n+1})}{(k+1)^n}= c_k\]
which is a result of Aluffi-Chen-Marcolli (cf.~\cite[Theorem~1.3]{ACM} with $n$ replaced by $n+1$).
\end{remark}
\begin{remark}[Linear terms]
	An analogous statement for $\charQ^j_{n,k}$ is not true. This failure can be observed even for the linear terms.
	Note that $\charQ_{n,k}$ is always divisible by $m$. One can see from \eqref{eq:chi.U_H} that $\charQ_{n,k}^1  = \frac{1}{n}$ whenever $n$ is a prime number. Indeed, when $n$ is prime, $\Stab(T)\subset \symS_n$ contains an $n$-cycle only if $T$ has only one vertex and $\Stab(T)=\symS_n$. Since the number of $n$-cycles in $\Stab(T)=\symS_n$ is precisely $(n-1)!$, the coefficient of $m$ is $\frac{(n-1)!}{n!}=\frac{1}{n}$.

    On the other hand, when $n=2p\geq 6$ for a prime $p$, the weighted rooted tree $T$ consisting of two non-root vertices attached to the root, each with $p$ inputs, contributes an additional $\frac{1}{n}$, provided that $T$ satisfies the valency conditions (cf.~Definition~\ref{y20}).
	
   More generally, by the same argument using \eqref{eq:chi.U_H}, one can check that 
	\beq\label{eq:qnk1}
    \charQ_{n,k}^1=\frac{1}{n}+\frac{1}{n}\sum_{k_0=0}^{k-2}N_{n,k}(k_0),
    \eeq
	where for $0\leq k_0\leq k-2$, the integers $N_{n,k}(k_0)$ are defined as
	\[N_{n,k}(k_0):=\left\lvert\left\{r\in \Z_{>1}:k_0+2\leq r\leq \frac{n-k+k_0}{2} ~\text{ and }~ r\mid gcd(n,k-k_0)\right\}\right\rvert.\]
	The number $N_{n,k}(k_0)$ is the contribution from the weighted rooted trees having $r$ non-root vertices attached to the root, each with $\frac{n}{r}$ inputs, where the root vertex has weight $k_0$ and non-root vertices have weight $\frac{k-k_0}{r}$. 
\end{remark}

\medskip

\subsection{Asymptotic log-concavity}

From Theorem~\ref{thm:asymp.value}, we obtain the asymptotic degree log-concavity at $m=m_0\ge 1$ of $\charP_n(m,t)$ and $\charQ_n(m,t)$. 
\begin{corollary}\label{cor:asymp.lc.value}
	Fix $m_0,k\geq 1$. Then we have 
	\[\begin{split}
		&\charP_{n,k}(m_0)^2\geq \charP_{n,k-1}(m_0)\charP_{n,k+1}(m_0) ~\text{ and}\\
		&\charQ_{n,k}(m_0)^2\geq \charQ_{n,k-1}(m_0)\charQ_{n,k+1}(m_0)
	\end{split}
	\]
	for sufficiently large $n$. Moreover, when $m_0>1$, the inequalities are strict.
\end{corollary}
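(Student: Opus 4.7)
The plan is to deduce both log-concavity inequalities from the leading-order asymptotics provided by Theorem~\ref{thm:asymp.value}. Since $\charP_{n,k}(m_0)\sim \frac{d_k}{((k+1)m_0-1)!}n^{(k+1)m_0-1}$ and $\charQ_{n,k}(m_0)$ has the analogous form with $c_k$ in place of $d_k$, and since the exponents satisfy $(km_0-1)+((k+2)m_0-1)=2((k+1)m_0-1)$, both sides of each claimed inequality share the common leading order $n^{2((k+1)m_0-1)}$. The problem then reduces to comparing the leading coefficients, with a separate lower-order analysis required only in the degenerate case where the leading coefficients coincide.

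Next I will reduce the leading-coefficient comparison to an elementary algebraic bound. Starting from $d_k=(k+1)^{k-2}/k!$ and $c_k=(k+1)^{k-1}/k!$, short computations give
\[
\frac{d_k^2}{d_{k-1}d_{k+1}}=\frac{(k+1)^{2k-3}}{k^{k-2}(k+2)^{k-1}},\qquad \frac{c_k^2}{c_{k-1}c_{k+1}}=\frac{(k+1)^{2k-1}}{k^{k-1}(k+2)^k},
\]
while a telescoping identity yields
\[
\frac{(km_0-1)!\,((k+2)m_0-1)!}{((k+1)m_0-1)!^2}=\prod_{j=0}^{m_0-1}\frac{(k+1)m_0+j}{km_0+j}.
\]
Multiplying the appropriate pair and absorbing the $j=0$ factor $(k+1)/k$ into the first ratio converts the leading-coefficient statement for $\charP$ into
\[
\left(\frac{(k+1)^2}{k(k+2)}\right)^{k-1}\cdot \prod_{j=1}^{m_0-1}\frac{(k+1)m_0+j}{km_0+j}\ge 1,
\]
and the $\charQ$ version into the same statement with exponent $k$ in place of $k-1$. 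The first factor is $\ge 1$ by the elementary bound $(k+1)^2>k(k+2)$, with strict inequality whenever its exponent is positive; the second factor is an empty product when $m_0=1$ and is strictly greater than $1$ when $m_0\ge 2$. Hence the leading coefficient of the difference is strictly positive for every $(m_0,k)$ with $m_0,k\ge 1$, except for the single pair $(m_0,k)=(1,1)$ on the $\charP$ side, where both factors degenerate to $1$. In particular, for $m_0>1$ this yields strict log-concavity for all sufficiently large $n$.

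The main obstacle will be the lone borderline case $m_0=1$, $k=1$ for $\charP$, in which the leading terms of $\charP_{n,1}(1)^2$ and $\charP_{n,0}(1)\charP_{n,2}(1)$ both equal $\tfrac14 n^2$ and the $o(n)$ error in Theorem~\ref{thm:asymp.value} is too coarse to decide the sign of the difference. I plan to dispose of this case by appealing to Remark~\ref{re:19}: at $m_0=1$ one has $\charP_{n,k}(1)=\dim H^{2k}(\Mbar_{0,n})^{\symS_n}$, so the desired weak inequality coincides with \cite[Corollary~6.10]{CKL2}. A self-contained alternative would be to sharpen the expansion of $\charP_{n,1}(1)$ by one order using the wall-crossing relation of Theorem~\ref{thm:recursion}(1) together with the $O$-type error already available on the $\charQ$ side, but invoking the earlier result is cleanest. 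Combining this with the strict leading-coefficient inequality established in all remaining cases will complete the proof.
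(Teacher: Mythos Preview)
Your proposal is correct and follows essentially the same route as the paper: reduce via Theorem~\ref{thm:asymp.value} to comparing the leading coefficients $c_k/((k+1)m_0-1)!$ and $d_k/((k+1)m_0-1)!$, rewrite the factorial ratio as the telescoping product $\prod_{j=0}^{m_0-1}\frac{(k+1)m_0+j}{km_0+j}$, and combine with the explicit formulas for $c_k^2/(c_{k-1}c_{k+1})$ and $d_k^2/(d_{k-1}d_{k+1})$. The only presentational difference is that the paper disposes of the entire $m_0=1$ case by citing \cite{CKL2} before treating $m_0>1$, whereas you carry the algebra through uniformly and observe that the leading-coefficient comparison is strict in every case except the single pair $(m_0,k)=(1,1)$ on the $\charP$ side, for which you then appeal to \cite[Corollary~6.10]{CKL2}; this is a slightly sharper bookkeeping of when the asymptotic argument alone suffices, but the substance is the same.
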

\begin{proof}
	By Theorem~\ref{thm:asymp.value}, it suffices to show that the sequences 
	\[\left\{\frac{c_k}{((k+1)m_0-1)!}\right\}_{k\ge 0}  \and \left\{\frac{d_k}{((k+1)m_0-1)!}\right\}_{k\ge 0}\] 
	are log-concave.
	Since the case of $m_0=1$ was already proved in \cite{CKL2}, we may assume $m_0>1$.
	Then, one can check that
	\[\begin{split}
		\frac{(km_0-1)!((k+2)m_0-1)!}{(((k+1)m_0-1)!)^2}&= \frac{(km_0+m_0)}{km_0}\frac{(km_0+m_0+1)}{km_0+1}\cdots\frac{(km_0+2m_0-1)}{(km_0+m_0-1)}\\
		&=\prod_{i=0}^{m_0-1}\left(1+\frac{m_0}{km_0+i}\right)> 1+\frac{1}{k}=\frac{k+1}{k},
	\end{split}
	\]
	while $\frac{c_k^2}{c_{k-1}c_{k+1}}=\frac{(k+1)^{2k-1}}{k^{k-1}(k+2)^k}$. Combining these, we have
	\[\frac{c_k^2}{c_{k-1}c_{k+1}}\cdot \frac{(km_0-1)!((k+2)m_0-1)!}
	{(((k+1)m_0-1)!)^2}> 
	\left(1+\frac{1}{k^2+2k}\right)^k>1.\]
	This shows the second (strict) inequality in the assertion. Similarly, 
	\[\frac{d_k^2}{d_{k-1}d_{k+1}}\cdot \frac{(km_0-1)!((k+2)m_0-1)!}
	{(((k+1)m-1)!)^2}> 
	\left(1+\frac{1}{k^2+2k}\right)^{k-1}\geq1.\]
	This shows the first (strict) inequality in the assertion.
\end{proof}

\medskip

From Theorem~\ref{thm:mu.coeff}, we establish two asymptotic log-concavities of $\charP_n(m,t)$ and $\charQ_n(m,t)$.   
First, we show the asymptotic degree \emph{ultra} log-concavity in length $n-j$ as follows. 
\begin{corollary}\label{cor:lc.coeff.t}
	Fix $j\geq 0$ and $k\geq 1$. Then for sufficiently large $n$, 
	\[\begin{split}
		&\left(\frac{
        \charP_{n,k}^{n-j}}{\binom{n-3}{k}}\right)^2\geq \left(\frac{
        \charP_{n,k-1}^{n-j}}{\binom{n-3}{k-1}}\right)\left(\frac{
        \charP_{n,k+1}^{n-j}}{\binom{n-3}{k+1}}\right) ~\text{ and}\\
		&\left(\frac{
        \charQ_{n,k}^{n-j}}{\binom{n-2}{k}}\right)^2\geq \left(\frac{
        \charQ_{n,k-1}^{n-j}}{\binom{n-2}{k-1}}\right)\left(\frac{
        \charQ_{n,k+1}^{n-j}}{\binom{n-2}{k+1}}\right).
	\end{split}
	\]
\end{corollary}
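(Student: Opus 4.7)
The plan is to deduce both inequalities directly from the asymptotic formulas in Theorem~\ref{thm:mu.coeff} by showing that, after dividing by the appropriate binomial coefficients, the log-concavity ratio is governed by an exponentially growing leading term that drowns out the lower-order corrections.

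Since $c(n,n-j)\sim n^{2j}/(2^j j!)$, the error estimates in Theorem~\ref{thm:mu.coeff} can be rewritten multiplicatively as
\[
\charQ_{n,k}^{n-j}=c_k\cdot\frac{(k+1)^{n-j}\,c(n,n-j)}{n!}\cdot\bigl(1+r_{n,k}\bigr),\qquad r_{n,k}=O\!\left(\bigl(\tfrac{k}{k+1}\bigr)^{\!n}\right),
\]
and an analogous identity holds for $\charP_{n,k}^{n-j}$ with $c_k$ replaced by $d_k$ and $r_{n,k}$ replaced by some $r'_{n,k}=o(1)$. In particular, each of the six error factors occurring at the indices $k-1,k,k+1$ tends to $0$ as $n\to\infty$.

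Substituting into $A_{n,k}:=\charQ_{n,k}^{n-j}/\binom{n-2}{k}$, the common factor $c(n,n-j)/n!$ cancels and a direct manipulation gives
\[
\frac{A_{n,k}^2}{A_{n,k-1}\,A_{n,k+1}}=\frac{c_k^2}{c_{k-1}c_{k+1}}\cdot\left(\frac{(k+1)^2}{k(k+2)}\right)^{\!n-j}\cdot\frac{\binom{n-2}{k-1}\binom{n-2}{k+1}}{\binom{n-2}{k}^2}\cdot\bigl(1+o(1)\bigr).
\]
The first factor is a positive constant; the last factor equals $\tfrac{k(n-k-2)}{(k+1)(n-k-1)}$, which is bounded (and converges to $\tfrac{k}{k+1}$); and the middle factor grows exponentially because $(k+1)^2-k(k+2)=1>0$. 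Therefore the right-hand side tends to $+\infty$, and in particular exceeds $1$ for all sufficiently large $n$. The same computation, with $d_k$ in place of $c_k$ and $\binom{n-3}{k}$ in place of $\binom{n-2}{k}$, proves the inequality for $\charP$; the decisive exponential factor $\bigl((k+1)^2/k(k+2)\bigr)^{n-j}$ is unchanged.

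The only point needing slight care, which is not a real obstacle, is the simultaneous control of the six error factors so that they may be absorbed into a single $1+o(1)$ correction. This is legitimate because each of them decays at least as fast as $\bigl((k+1)/(k+2)\bigr)^n$ (the slowest among the six), while the leading exponential factor grows like $\bigl(1+\tfrac{1}{k(k+2)}\bigr)^{n-j}$; since the former tends to $0$ and the latter to $\infty$, the perturbation cannot reverse the inequality for large $n$.
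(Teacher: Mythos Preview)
Your proof is correct and follows essentially the same approach as the paper: both arguments invoke Theorem~\ref{thm:mu.coeff} to replace $\charQ_{n,k}^{n-j}$ and $\charP_{n,k}^{n-j}$ by their leading asymptotics and then observe that the resulting ratio is dominated by the exponentially growing factor $\bigl((k+1)^2/k(k+2)\bigr)^{n-j}$. Your write-up is more explicit than the paper's terse reduction, though the final paragraph is unnecessary once you have established that each error factor is $o(1)$ (so the correction is automatically $1+o(1)$, independent of any comparison with the exponential growth).
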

\begin{proof}
	When $n$ is sufficiently large, these are by Theorem~\ref{thm:mu.coeff} equivalent to log-concavity of $\left\{d_k(k+1)^n/\binom{n-3}{k}\right\}_k$ and $\left\{c_k(k+1)^n/\binom{n-2}{k}\right\}_k$ at given $k$ respectively. One can easily check that the latter are true for all $k\geq 1$.
\end{proof}
This result generalizes the asymptotic ultra log-concavity of the Poincar\'e polynomial of $\Mbar_{0,n}$, previously discovered in \cite{ACM}, which corresponds to the case of $j=0$.

Next, we examine the length log-concavity of the characteristic polynomials $\charP_{n}(m,t)$ and $\charQ_{n}(m,t)$ in degree $k$. 
\begin{corollary}\label{cor:lc.coeff.m}
	Fix $j\geq 1$ and $k\geq 0$. Then for sufficiently large $n$,
	\[\begin{split}
		&\left(
        \charP_{n,k}^{n-j}\right)^2\geq \left(
        \charP_{n,k}^{n-j-1}\right)\cdot\left(
        \charP_{n,k}^{n-j+1}\right) ~\text{ and}\\
		&\left(
        \charQ_{n,k}^{n-j}\right)^2\geq \left(
        \charQ_{n,k}^{n-j-1}\right)\cdot\left(
        \charQ_{n,k}^{n-j+1}\right).
	\end{split}
	\]
\end{corollary}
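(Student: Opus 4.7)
The plan is to read off leading asymptotics from Theorem~\ref{thm:mu.coeff} and compare the square of the middle term with the product of its neighbours. Fix $k\ge 0$ and $j\ge 1$, and set
\[a_{n,j}:=c_k\,\frac{(k+1)^{n-j}c(n,n-j)}{n!},\qquad b_{n,j}:=d_k\,\frac{(k+1)^{n-j}c(n,n-j)}{n!},\]
so that Theorem~\ref{thm:mu.coeff} reads $\charQ_{n,k}^{n-j}=a_{n,j}+o((k+1)^{n}n^{2j}/n!)$ and $\charP_{n,k}^{n-j}=b_{n,j}+o((k+1)^{n}n^{2j}/n!)$.  I would use the standard asymptotic
\[c(n,n-j)=\frac{n^{2j}}{2^{j}j!}+O(n^{2j-1})\qquad(n\to\infty,\ j\text{ fixed}),\]
from which $a_{n,j}$ is of exact order $(k+1)^{n-j}n^{2j}/n!$.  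Consequently the error bound in Theorem~\ref{thm:mu.coeff} is strictly smaller than the main term, giving $\charQ_{n,k}^{n-j}=a_{n,j}(1+o(1))$ and analogously for $\charP$.

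Next I would compute the ratio of the leading parts.  The factors $c_k^{\,2}$ and $(k+1)^{2n-2j}$ match between numerator and denominator, as do the factors $n^{4j}/(n!)^{2}$, leaving only
\[\frac{a_{n,j}^{\,2}}{a_{n,j-1}\,a_{n,j+1}}\;\longrightarrow\;\frac{(2^{j+1}(j+1)!)(2^{j-1}(j-1)!)}{(2^{j}j!)^{2}}=\frac{(j+1)!(j-1)!}{(j!)^{2}}=\frac{j+1}{j}>1,\]
and the identical computation for $b_{n,j}$ (with $d_k$ in place of $c_k$) gives the same limit $(j+1)/j$.  Combined with the fact that the error terms are negligible relative to the main terms, this yields
\[\frac{(\charQ_{n,k}^{n-j})^{2}}{\charQ_{n,k}^{n-j-1}\,\charQ_{n,k}^{n-j+1}}\;\longrightarrow\;\frac{j+1}{j},\qquad\frac{(\charP_{n,k}^{n-j})^{2}}{\charP_{n,k}^{n-j-1}\,\charP_{n,k}^{n-j+1}}\;\longrightarrow\;\frac{j+1}{j},\]
so for all sufficiently large $n$ both ratios exceed $1$, which is the claimed inequality.  (Positivity of the three quantities for large $n$ is automatic from the positivity of the leading term.)

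There is essentially no obstacle: once the asymptotic input of Theorem~\ref{thm:mu.coeff} is in hand, the only nontrivial step is verifying that the limit $(j+1)/j$ of the Stirling-number ratios strictly exceeds $1$, which is immediate for $j\ge 1$.  (Note that for $j=0$ the same argument would yield a limit of $0$ and the inequality fails in general; this is consistent with the hypothesis $j\ge 1$.)  The entire proof is a one-paragraph calculation built on top of the asymptotic formulas already established.
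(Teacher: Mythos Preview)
Your proof is correct and follows the same overall strategy as the paper: use Theorem~\ref{thm:mu.coeff} to replace $\charQ_{n,k}^{n-j}$ and $\charP_{n,k}^{n-j}$ by their leading terms, then verify log-concavity of those. The paper handles the final step differently: rather than computing the limiting ratio $(j+1)/j$ from the asymptotic $c(n,n-j)\sim n^{2j}/(2^{j}j!)$, it cites the real-rootedness of $m(m+1)\cdots(m+n-1)=\sum_{j}c(n,j)m^{j}$ together with Lemma~\ref{lem:realrooted} to conclude that $\{c(n,n-j)\}_j$ is log-concave for every $n$. Your explicit computation of the limit is in fact a bit sharper, since mere log-concavity of the Stirling sequence (ratio $\ge 1$ for each fixed $n$) does not by itself show that the ratio stays bounded away from $1$ as $n\to\infty$, which is what one actually needs once the $o(1)$ errors are reintroduced. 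One minor correction: your parenthetical about $j=0$ is off---in that case $\charP_{n,k}^{n+1}=\charQ_{n,k}^{n+1}=0$ and the inequality holds trivially, rather than failing.
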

\begin{proof}
By Theorem~\ref{thm:mu.coeff}, when $n$ is sufficiently large, this reduces to the log-concavity of $\{c(n,n-j)\}_j$, which follows immediately from the identity $m(m+1)\cdots(m+n-1)=\sum_{j=0}^n c(n,j)m^j$ and Lemma~\ref{lem:realrooted}.
\end{proof}

\bigskip
\section{Proofs of asymptotic formulas}\label{s:proofs}
In this section, we provide proofs of Theorem~\ref{thm:asymp.value} and Theorem~\ref{thm:mu.coeff}. The proofs are based on the combinatorial description involving the weighted rooted trees. A central observation is that the dominant terms in both asymptotic formulas arise from the rooted tree with the maximum possible number of vertices, which accounts for the appearance of the Cayley numbers in the formula.

\subsection{Proof of Theorem~\ref{thm:asymp.value}} \label{S7.1}
Let $\overline \sT_k$ be the set of isomorphism classes of weighted rooted trees with total weight $k$ and no inputs. Let 
\[F_n:\sT_{n,k}\lra \overline \sT_k\] 
be the forgetful map which forgets all the inputs of a given $T\in \sT_{n,k}$. 
In other words, the fiber of $\overline T\in \overline\sT_k$ consists of all possible ways of attaching $n$ inputs to the vertices of $\overline T$, subject to the valency conditions (cf.~Definition~\ref{y20}).

Decompose $\charQ_{n,k}$ with respect to $F_n$: 
\[\charQ_{n,k}=\sum_{\overline T\in \overline \sT_k}\charQ_{n,\overline T}, \quad \text{ where }\charQ_{n,\overline T}:=\sum_{T\in F_n^{-1}(\overline T)}\stan_{U_T}.\]
Suppose that $j:=\lvert V(\overline T)\rvert \leq k$. Then, by Corollary~\ref{cor:coloring.chiUT}, $\charQ_{n,\overline T}(m_0)$ is 
bounded by the number of all possible ways of attaching $n$ inputs to the vertices of $\overline{T}$ and coloring them using $m_0$ colors, which is given by 
\[\sum_{\substack{n_1+\cdots+n_j=n,\\ n_i\geq0 }}\prod_{i=1}^j\multiset{m_0}{n_i}=\multiset{jm_0}{n}=\frac{n^{jm_0-1}}{(jm_0-1)!}+O(n^{jm_0-2}).\]
In particular, 
$\charQ_{n,\overline T}(m)=O(n^{(k+1)m_0-2})$ for all $\overline T$ with at most $k$ vertices. 

Suppose that $\lvert V(\overline T)\rvert=k+1$. By a similar argument, we have 
\[\charQ_{n,\overline T}(m_0)=\frac{1}{\lvert \Aut(\overline T)\rvert}\cdot \frac{n^{(k+1)m_0-1}}{((k+1)m_0-1)!}+O(n^{(k+1)m_0-2}).\]
Furthermore, since $k+1$ is the maximal possible number of vertices that $\overline T\in \overline\sT_k$ can have, every non-root vertex of $\overline T$ has weight one while the root vertex has weight zero. In particular, automorphisms of $\overline T$ as weighted rooted trees are precisely those as rooted trees. Therefore, by the same argument as in the proof of \cite[Theorem 6.3]{CKL2}, we have
\beq\label{eq:ck}\sum_{\overline T\in \overline \sT_k,~\lvert V(\overline T)\rvert=k+1}\frac{1}{\lvert\Aut(\overline T)\rvert}=\frac{(k+1)^{k-1}}{\lvert \symS_k\rvert}=c_k\eeq
where $\overline T$ are considered as trees with $k+1$ vertices, and $(k+1)^{k-1}$ is the number of trees on $k+1$ labeled vertices by Cayley's tree formula.  
Consequently,
\beq\label{eq:mu_nk.BigO}\charQ_{n,k}(m_0)=\sum_{\overline T\in \overline T_k}\charQ_{n,\overline T}(m_0)=c_k\cdot \frac{n^{(k+1)m_0-1}}{((k+1)m_0-1)!}+O(n^{(k+1)m_0-2}),\eeq
which implies the first formula.  

For the second formula, divide both sides of \eqref{eq:walcrossing.charpoly} by $n^{(k+1)m_0-1}$ and take their limits as $n\to \infty$.  
Then,
	\[\begin{split}
		&\lim_{n\to\infty }\frac{\charP_{n,k}(m_0)+\charP_{n,k-1}(m_0)}{n^{(k+1)m_0-1}}=\frac{c_k}{((k+1)m_0-1)!}\\
		&-\frac{1}{2}\sum_{j=0}^{k-1}\frac{c_jc_{k-1-j}}{((j+1)m_0-1)!((k-j)m_0-1)!}\cdot\lim_{n\to\infty }\sum_{h=2}^{n-2}\frac{h^{(j+1)m-1}(n-h)^{(k-j)m_0-1}}{n^{(k+1)m_0-1}},
	\end{split}
	\]
	due to \eqref{eq:mu_nk.BigO}. Indeed, for integers $a,b,c\geq0$,
	\[\lim_{n\to\infty}\sum_{h=2}^{n-2}h^a(n-h)^b\frac{1}{n^{c+1}}= \int_0^1x^a(1-x)^bdx=\frac{a!b!}{(a+b+1)!}\]
	if $a+b=c$, and hence it vanishes if $a+b<c$. This also implies that
	\[\begin{split}
		&\lim_{n\to\infty }\frac{\charP_{n,k}(m_0)+\charP_{n,k-1}(m_0)}{n^{(k+1)m_0-1}}=\frac{1}{((k+1)m_0-1)!}\Big(c_k- \frac{1}{2}\sum_{j=0}^{k-1}c_jc_{k-1-j}\Big),
	\end{split}
	 \] 
	which is equal to ${d_k}/{((k+1)m_0-1)!}$ as checked in \cite[Theorem~6.9]{CKL2}.
    Now the second formula in Theorem~\ref{thm:asymp.value} follows by an induction on $k$.
\hfill \mathqed
\\

The rest of this section is devoted to a proof of Theorem~\ref{thm:mu.coeff}.
It requires certain facts about $\Stab(T)$, as described in the following subsections.

\subsection{Stratifications of $\Stab(T)$} \label{S7.2}
For a labeled weighted rooted tree $T$, 
let $F(T)$ denote the weighted rooted tree without inputs obtained from $T$ by forgetting the inputs. 
Recall that $\Aut(F(T))\subset \symS_{V(F(T))}$ consists of elements whose actions on $V(T)=V(F(T))$ preserve the isomorphism class of $F(T)$ as a weighted rooted tree. Then, the natural group homomorphism $\Stab(T)\to \symS_{V(T)}$ factors through $\Aut(F(T))$.

Let $\lambda(T)$ denote the partition of the set of the inputs according to the vertices of $T$ they are attached to. This defines a subgroup $\symS_{\lambda(T)}\subset \Stab(T)$.

\begin{lemma}\label{l:Stab.decomp}
	Let $T$ be a labeled rooted tree. Let $\bar T= F(T)$. Then $\symS_{\lambda(T)}$ is the kernel of the group homomorphism $\Stab(T)\to \Aut(\bar T)$.
\end{lemma}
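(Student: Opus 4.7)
The plan is to realize the homomorphism $\Stab(T)\to \Aut(\bar T)$ as one of two natural projections from an auxiliary fiber-product group, and then to read off the kernel directly from the other projection.

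More concretely, let $\iota:\{1,\dots,n\}\to V(T)$ denote the input map of $T$ and define
\[\widetilde{\Aut}(T)=\{(\phi,\sigma)\in \Aut(\bar T)\times \symS_n \ : \ \phi\circ \iota=\iota\circ \sigma^{-1}\}.\]
Under componentwise composition this is a group, with projections $\pi_1$ onto $\symS_n$ and $\pi_2$ onto $\Aut(\bar T)$. Unwinding the definitions, $\sigma$ lies in $\Stab(T)$ precisely when there is an isomorphism $\phi:T\to \sigma\cdot T$ of labeled weighted rooted trees, which unpacks exactly to the compatibility $\phi\circ \iota=\iota\circ \sigma^{-1}$. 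Hence the image of $\pi_1$ is $\Stab(T)$, and once $\pi_1$ is shown to be injective so that $\widetilde{\Aut}(T)\cong \Stab(T)$, the composition $\Stab(T)\xrightarrow{\pi_1^{-1}}\widetilde{\Aut}(T)\xrightarrow{\pi_2}\Aut(\bar T)$ is the desired natural homomorphism.

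The heart of the argument is establishing the injectivity of $\pi_1$, i.e., that any $\phi\in \Aut(\bar T)$ with $\phi\circ \iota=\iota$ is the identity. The key input is the valency condition of Definition~\ref{y20}: writing $i(v)$ for the number of inputs at $v$, a non-root leaf $v$ has $\val(v)=1+i(v)\geq 4$, forcing $i(v)\geq 3$, and when the root itself is a leaf the condition $\val(v)=i(v)+1\geq 3$ forces $i(v)\geq 2$. Consequently every leaf of $T$ is input-bearing, so $\phi$ must fix all leaves. Since any vertex $v$ of a rooted tree is determined by the set of leaves in the subtree rooted at $v$ (as their lowest common ancestor), $\phi$ must then fix every vertex. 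This is the only step requiring nontrivial combinatorial input, and is the main subtlety of the proof.

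Given the injectivity, the kernel identification is immediate: an element $\sigma\in\Stab(T)\cong \widetilde{\Aut}(T)$ lies in $\ker \pi_2$ iff the corresponding $\phi$ equals $\mathrm{id}$, iff $\iota=\iota\circ \sigma^{-1}$, iff $\sigma$ permutes inputs within each fiber of $\iota$---that is, $\sigma\in \symS_{\lambda(T)}$.
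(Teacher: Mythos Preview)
Your approach is genuinely different from the paper's. The paper invokes the recursive decomposition $\Stab(T)\cong \symS_{n_0}\times \prod_j(\Stab(T_j)^{r_j}\rtimes \symS_{r_j})$ from Lemma~\ref{lem:decomp.wrt} and \eqref{eq:decomp.Stab} and argues by induction on the tree structure; you instead give a direct global argument via the fiber-product group $\widetilde{\Aut}(T)$. A virtue of your route is that it makes explicit why the homomorphism $\Stab(T)\to\Aut(\bar T)$ is well defined at all (this is exactly the injectivity of $\pi_1$), something the paper takes for granted. The paper's route, by contrast, recycles machinery already in place and needs no new combinatorial input.

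There is, however, a small gap in your injectivity step. The claim that ``any vertex $v$ is the lowest common ancestor of the leaves below it'' fails whenever $v$ has exactly one child $c$: then $L_v=L_c$ and the LCA lies at $c$ or deeper, not at $v$. Such vertices are permitted by the valency conditions (a non-root vertex with one child has $\val(v)=2+i(v)\ge 4$, forcing only $i(v)\ge 2$). The fix is immediate: either observe that any vertex with at most one child must carry an input by the valency inequalities and is therefore fixed by $\phi$ directly, or replace the LCA argument by the cleaner induction on depth (the root is fixed; siblings have pairwise disjoint nonempty descendant-leaf sets, so each child of a fixed vertex is itself fixed once the leaves are). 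With this patch your argument is complete.
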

\begin{proof}
	This follows from the inductive construction of $T$ and $\Stab(T)$ in Lemma~\ref{lem:decomp.wrt} and \eqref{eq:decomp.Stab}, and that of $\bar T$ and $\Aut(\bar T)$.
\end{proof}

\begin{example}\label{ex:wrt.trileg}
	Let $\bar T$ be the weighted rooted tree without inputs and with $V(\bar T)=\{v_0,v_1,v_2,v_3\}$, where $v_0$ is the root with weight 0 and the other vertices are adjacent to $v_0$ and have weight 1. 
	
	We attach labeled inputs to vertices of $\bar T$ to obtain a labeled weighted rooted tree $T$. Assume that $I=\{1,\cdots,9\}$ is the set of all inputs and $I_{v_j}$ is the subset of the inputs attached to $v_j$ defined as follows:
	\[I=I_{v_0}\sqcup I_{v_1}\sqcup I_{v_2}\sqcup I_{v_3}\]
	with $I_{v_0}=\emptyset$, 
	$I_{v_1}=\{1,4,7\}$, $I_{v_2}=\{2,5,8\}$ and $I_{v_3}=\{3,6,9\}$.
	In particular, 
	\[\Stab(T)=(\symS_{I_{v_1}}\times \symS_{I_{v_2}}\times \symS_{I_{v_3}})\rtimes \symS_3~\subset~ \symS_{I}=\symS_9,\] 
	with a splitting given by   
	\[\symS_3\lra \Stab(T), \quad \tau ~\mapsto ~\big(3i+j~\mapsto~ 3i+\tau(j)\in I\big)_{0\leq i\leq 2, 1\leq j\leq 3}.\]
	The splitting  identifies $\Aut(\bar T)=\symS_3$, as a natural quotient of $\Stab(T)$.

    More concretely, an element of $\Stab(T)$ is the permutation $\sigma$ in $\symS_I$ that preserves the partition $I_{v_1}\sqcup I_{v_2}\sqcup I_{v_3}$, that is, $\sigma(I_{v_j}) = I_{\tau(v_j)}$ for each $j$ for some permutation $\tau\in \symS_{\{v_1,v_2,v_3 \}} $. In this case, $\tau$ is the image of $\sigma$ by the map $\Stab(T)\to \Aut(\bar T)$. 
\end{example}

\begin{definition}
	Let $T$ be a labeled weighted rooted tree. Let $\bar T=F(T)$. Let $n$ be the number of the inputs of $T$ so that $\Stab(T)\subset \symS_n$.
	\begin{enumerate}
		\item Define the \emph{length} of an element $\sigma$ in $\Stab(T)$, denoted by $\ell(\sigma)$, as the number of disjoint cycles in $\sigma$  as an element of $\symS_n$, or equivalently, the number of its orbits in the set of labels.  This is also the length of the partition of $n$ associated to $\sigma\in \symS_n$. 
		\item Let $\Stab^\ell(T)\subset \Stab(T)$ be the subset of elements of length $\ell>0$. 
		\item Let $\Stab_{\tau}(T)\subset \Stab(T)$ be the preimage of $\tau\in \Aut(\bar T)$ under the canonical map $\Stab(T)\to \Aut(\bar T)$. 
		\item Let $\Stab^\ell_\tau(T)=\Stab^\ell\cap \Stab_\tau(T)$ for each $\ell$ and $\tau$. 
	\end{enumerate}
	In particular, $\Stab(T)=\bigsqcup_{\ell>0,\tau\in \Aut(\bar T)}\Stab^\ell_\tau(T)$. 
	
	We define similarly for unlabeled $T$, in which case every object above is well defined only (as a subset of $\symS_n$) up to conjugation.
\end{definition}

\begin{example}\label{ex:tau}
	Let $T$, $\bar T$, $I$ and $I_{v_j}$ be as in Example~\ref{ex:wrt.trileg}. 
	Then the permutation $\sigma=(1,5,9,4,2,6,7,8,3)\in \symS_I=\symS_9$ lies in $\Stab(T)$.
    Moreover, its image under the map $\Stab(T)\to \Aut(\bar T)=\symS_3$ is $\tau:=(1,2,3)$, since $\sigma$ cyclically permutes the subsets $I_{v_1}\to I_{v_2}\to I_{v_3}\to I_{v_1}$. In particular, $\sigma\in \Stab_\tau(T)$.
\end{example}

\begin{example}\label{ex:chi.UT}
	Let $T$ be a labeled weighted rooted tree with $n$ inputs. Then, applying \eqref{eq:chi.U_H} in Example~\ref{ex:chi.U_H} to $H=\Stab(T)$, we obtain
	\[\stan_{U_T}(m)=\sum_{j=1}^n \frac{\lvert \Stab^j(T)\rvert}{\lvert \Stab(T)\rvert}m^j=\sum_{j=1}^n\sum_{\tau\in \Aut(\bar T)} \frac{\lvert \Stab^j_\tau(T)\rvert}{\lvert \Stab(T)\rvert}m^j.\]
	This is the same for unlabeled $T$.
\end{example}

\medskip

The first step in the proof of Theorem~\ref{thm:mu.coeff} is to decompose
the coefficient $\charQ^{n-j}_{n,k}$. 
To explain this, we first consider the decomposition of $\charQ_{n,k}$ with respect to the map $F_n:\sT_{n,k}\to \bar \sT_k$ as follows:
\[\charQ_{n,k}=\sum_{\bar T\in \bar \sT_k}\charQ_{n,\bar T}, \quad \text{ where } ~\charQ_{n, \bar T}:=\sum_{T\in F^{-1}_n(\overline T)} \stan_{U_T} \]
for $\bar T \in \overline \sT_k$.
By definition,  $F_n^{-1}(\overline T)\subset \sT_{n,k}$ consists of weighted rooted trees obtained by attaching $n$ inputs to the vertices of $\bar T$.

This decomposes further by Example~\ref{ex:chi.UT}: taking the coefficients of $m^{n-j}$,
	\beq \label{eq:coeff}
	\charQ_{n,\bar T}^{n-j}= 
    \sum_{\tau\in \Aut(\bar T)}\sum_{T\in F_n^{-1}(\bar T)}\frac{\lvert\Stab^{n-j}_\tau(T)\rvert}{\lvert\Stab(T)\rvert}.
    \eeq
	As we will see later, the summand in \eqref{eq:coeff} for $\tau =e$ dominates the others as $n$ grows. So we first obtain an asymptotic formula for the case of $\tau=e$.
	
	\medskip
    
When $\tau = e$, the numerator $\lvert\Stab_e^{n-j}(T)\rvert$ counts elements in $\Stab_e(T) \cong \symS_{\lambda(T)}$ with $n-j$ disjoint cycles, where the number of inputs attached to each vertex of $\bar{T}$ can vary. Since $\bar{T}$ is fixed, so is $\ell(\lambda(T))=\lvert V(\bar T)\rvert$.
Thus, the summand for $\tau =e$ is controlled by the $\lvert V(\bar{T})\rvert$-th power of the generating function of the signless Stirling numbers of the first kind defined as follows. 

Recall that $c(n,j)$ satisfy
\beq\label{eq:Stirling.poly}m(m+1)\cdots(m+n-1)=\sum_{j=0}^n c(n,j)m^j \and c(0,0)=1.\eeq
The (double) generating function of these numbers is known as follows:
\[\mathfrak{S}:=\sum_{n\geq1}\sum_{j=1}^n\frac{c(n,j)}{n!}u^nv^j=(1-u)^{-v}
\quad(=\exp(-v\log(1-u)))\]
where $u$ and $v$ are formal variables.

For notational convenience, we write  for given $j,k\geq 0$,
\[A_{j,k}(n)=\mathrm{coeff}_{m^{n-j}}\multiset{m(k+1)}{n}=\frac{(k+1)^{n-j}c(n,n-j)}{n!}.\]
Note that $\lim_{n\to \infty}c(n,n-j)/(\frac{n^{2j}}{2^jj!})=1$. In particular, $O(A_{j,k-1})=O(\frac{k^nn^{2j}}{n!})$.

\begin{proposition}\label{p:summand.identity}
	Fix $j$ and $k\geq0$. Then as $n$ grows, 
	\[\sum_{\bar T\in \bar \sT_k}\sum_{T\in F_n^{-1}(\bar T)}\frac{\lvert \Stab_e^{n-j}(T)\rvert}{\lvert \Stab(T)\rvert}=  c_k\cdot
	A_{j,k}(n) 
	+O\left(\frac{k^nn^{2j}}{n!}\right).\]
\end{proposition}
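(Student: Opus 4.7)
The plan is to decompose the left-hand side by first fixing $\overline T \in \overline \sT_k$ and recognizing the inner sum as a multinomial convolution expansion of $\multisetbody{pm}{n}$, where $p = \lvert V(\overline T)\rvert$. By Lemma~\ref{l:Stab.decomp}, $\symS_{\lambda(T)} = \prod_v \symS_{n_v}$ is the kernel of the canonical map $\Stab(T) \to \Aut(\overline T)$, and its image is the subgroup of $\Aut(\overline T)$ preserving the input distribution $(n_v)_v$ (surjectivity holds because one can independently relabel inputs between matched pairs of vertices). In particular, $\lvert \Stab(T)\rvert$ equals $\prod_v n_v!$ times the size of this stabilizer. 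Since isomorphism classes $T \in F_n^{-1}(\overline T)$ are in bijection with $\Aut(\overline T)$-orbits of tuples $(n_v) \in \Z_{\geq 0}^{V(\overline T)}$ with $\sum_v n_v = n$, orbit--stabilizer yields
\[
\sum_{T \in F_n^{-1}(\overline T)} \frac{\lvert \Stab_e^{n-j}(T)\rvert}{\lvert \Stab(T)\rvert} = \frac{1}{\lvert \Aut(\overline T)\rvert} \sum_{\substack{(n_v) \in \Z_{\geq 0}^{V(\overline T)} \\ \sum_v n_v = n}} \frac{\lvert \Stab_e^{n-j}(T)\rvert}{\prod_v n_v!}.
\]

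Next, I expand $\lvert \Stab_e^{n-j}(T)\rvert$. Since $\Stab_e(T) = \prod_v \symS_{n_v}$, an element of length $n - j$ corresponds to a tuple $(\sigma_v)$ with $\sum_v (n_v - \ell(\sigma_v)) = j$; setting $j_v := n_v - \ell(\sigma_v)$ gives $\lvert \Stab_e^{n-j}(T)\rvert = \sum_{\sum j_v = j}\prod_v c(n_v, n_v - j_v)$. On the other hand, expanding $\multisetbody{pm}{n} = \sum_{n_1 + \cdots + n_p = n}\prod_i \multisetbody{m}{n_i}$ via $\multisetbody{m}{n_i} = \sum_{j_i \geq 0} \frac{c(n_i, n_i - j_i)}{n_i!} m^{n_i - j_i}$ and reading the coefficient of $m^{n-j}$ produces
\[
A_{j, p-1}(n) = \sum_{\substack{\sum n_i = n \\ \sum j_i = j}}\prod_i \frac{c(n_i, n_i - j_i)}{n_i!},
\]
which matches the sum above after interchanging summation order. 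Therefore the inner sum equals $A_{j, p-1}(n)/\lvert \Aut(\overline T)\rvert$.

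To conclude, I sum over $\overline T \in \overline \sT_k$ grouped by $p = \lvert V(\overline T)\rvert \leq k + 1$. The case $p = k + 1$ forces every non-root vertex to have weight exactly one, so the automorphisms of $\overline T$ as a weighted rooted tree coincide with those as a rooted tree; by \eqref{eq:ck}, $\sum_{\lvert V(\overline T)\rvert = k+1} \lvert \Aut(\overline T)\rvert^{-1} = c_k$, producing the leading contribution $c_k \cdot A_{j, k}(n)$. For $p \leq k$, the asymptotic $c(n, n - j) \sim n^{2j}/(2^j j!)$ implies $A_{j, p-1}(n) = O(p^{n-j} n^{2j}/n!) = O(k^n n^{2j}/n!)$, and since the number of $\overline T \in \overline \sT_k$ with at most $k$ vertices is finite, these lower-order terms aggregate to the claimed error. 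The main obstacle is the careful orbit--stabilizer bookkeeping between isomorphism classes of $T$ and $\Aut(\overline T)$-orbits of input distributions, together with the exact-sequence decomposition of $\Stab(T)$; once that framework is clean, the matching with the multinomial convolution of $\multisetbody{pm}{n}$ makes both the leading constant $c_k$ and the error estimate essentially automatic.
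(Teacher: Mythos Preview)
Your overall strategy matches the paper's: reduce to a sum over input-distributions $(n_v)_{v\in V(\bar T)}$ via orbit--stabilizer, identify the resulting convolution with the coefficient of $m^{n-j}$ in $\multisetbody{pm}{n}$ (equivalently, in $\mathfrak{S}^p$), and then separate the contribution from $p=k+1$ via \eqref{eq:ck}. The orbit--stabilizer bookkeeping and the generating-function identification are both correct.

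There is, however, a genuine gap. Your bijection ``$F_n^{-1}(\bar T)\leftrightarrow \Aut(\bar T)$-orbits of tuples $(n_v)\in\Z_{\ge 0}^{V(\bar T)}$ with $\sum_v n_v=n$'' is false: by Definition~\ref{y20}, a tree lies in $\sT_{n,k}$ only if each vertex satisfies the valency condition $w(v)\le \val(v)-3$, which forces a lower bound $n_v\ge m_v$ depending on $\bar T$. Hence your displayed identity
\[
\sum_{T \in F_n^{-1}(\bar T)} \frac{\lvert \Stab_e^{n-j}(T)\rvert}{\lvert \Stab(T)\rvert} \;=\; \frac{A_{j,\,p-1}(n)}{\lvert \Aut(\bar T)\rvert}
\]
is not an exact equality; the right side overcounts by those tuples with some $n_v<m_v$. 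The paper deals with this explicitly: since the bounds $m_v$ depend only on $\bar T$, fixing one $n_v$ to a bounded value leaves a convolution over the remaining $p-1$ vertices, which is $O\!\left((p-1)^n n^{2j}/n!\right)$; for $p=k+1$ this is $O(k^n n^{2j}/n!)$, so the discrepancy sits inside the stated error. Without this step, your argument does not establish the asymptotic, because the valency corrections for the dominant trees ($p=k+1$) are \emph{a priori} of the same order as your claimed error term coming from trees with $p\le k$, and must be bounded separately.
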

\begin{proof}
The proposition immediately follows from \eqref{eq:ck} and the following: for $\bar T\in \bar \sT$ with $N$ vertices,
	\beq\label{y21} \sum_{T\in F_n^{-1}(\bar T)}\frac{\lvert \Stab_e^{n-j}(T)\rvert}{\lvert \Stab(T)\rvert}=  \frac{1}{\lvert \Aut(\bar T)\rvert}\cdot A_{j,N-1}(n) 
	+O\left(\frac{(N-1)^nn^{2j}}{n!}\right).\eeq

	The left hand side of \eqref{y21} is the summation over all possible $N$-tuples of partitions of $n_1,\cdots, n_N$ with $\sum n_j=n$ subject to the valency conditions, where each summand is multiplied by $\lvert \Aut(\bar T)\rvert$. Since the valency conditions are given in the form $n_j\geq m_j$ for some $m_j$ depending of the shape of $\bar T$ and the weights on the vertices of $\bar T$, the left hand side is asymptotically equal to the summations over such $N$-tuples with any $n_j\geq 0$, upon multiplication by $\lvert \Aut(\bar T)\rvert$. Therefore, 
	\[\sum_{T\in F_n^{-1}(\bar T)}\frac{\lvert \Stab_e^{n-j}(T)\rvert}{\lvert \Stab(T)\rvert}=\frac{1}{\lvert \Aut(\bar T)\rvert} \sum_{\substack{n_1+\cdots+n_N= n,\\ j_1+\cdots+j_N= j,\\ n_i,j_i\geq0 \text{ for all }i}}\prod_{i=1}^N\frac{c(n_i,n_i-j_i)}{n_i!}\]
	modulo $O(\frac{(N-1)^nn^{2j}}{n!})$. The summation on the right hand side of \eqref{y21} is equal to the coefficient of $u^nv^{n-j}$ in
	\[\mathfrak{S}^N=(1-u)^{-Nv}=\exp(-Nv\log(1-u))\]
	when expanded to a power series.  
	Since 
	$$\frac{d^n}{d u^n}(1-u)^{-Nv}=\prod_{i=0}^{n-1}(Nv+i)\cdot (1-u)^{-Nv-n},$$
	the coefficient of $u^n$ in $\mathfrak{S}^N$ is
	$$\frac{1}{n!}\frac{d^n}{d u^n}(1-u)^{-Nv}|_{u=0}=\frac{1}{n!}\prod_{i=0}^{n-1}(Nv+i).$$
	Hence, by \eqref{eq:Stirling.poly}, the coefficient of $u^nv^{n-j}$ in $\mathfrak{S}^N$ is equal to $c(n,n-j)N^{n-j}$ divided by $n!$. 
\end{proof}

\medskip

It remains to show that the summands in \eqref{eq:coeff} corresponding to $\tau\neq e$ are asymptotically dominated by the summand corresponding to $\tau=e$. To this end, we introduce the notion of quotient trees by $\tau$ in the following subsection. 
We then compare the summands corresponding to $\tau\neq e$ with the summations associated to the quotient trees by $\tau$. Roughly speaking, the fact that these quotient trees are strictly smaller than the original tree ensures that the associated summations are asymptotically dominated by the one corresponding to $\tau= e$.   

\subsection{Quotient trees} \label{S7.3}
	Let $\bar T$ be a weighted rooted tree without inputs, where its vertices are ordered. Let $\tau\in \Aut(\bar T)\subset \symS_{V(\bar T)}$ be an automorphism of $\bar T$. Then there is a natural quotient $\bar T/\tau$ of $\bar T$, regarded as an element of $ \bar \sT$, 
	by the $\tau$-actions on $V(\bar T)$ and $E(\bar T)$ so that
	\[V(\bar T/\tau)=V(\bar T)/\tau \and E(\bar T/\tau)=E(\bar T)/\tau\] respectively. 
	The weight function $w:V(\bar T)\to \Z_{\geq0}$ of $\bar T$ naturally factors through $V(\bar T/\tau)\to \Z_{\geq0}$. We assign this as the weight function of $\bar T/\tau$. 

\medskip

Now let $T$ be a labeled weighted rooted tree.
Let $\bar T=F(T)$. Suppose that $\tau$ is in the image of $\Stab(T)\to \Aut(\bar T)$. Let $n_v$ denote the number of inputs attached to each vertex $v$ of $T$. Then we have $n_u=n_v$ whenever two vertices $u$ and $v$ are in the same $\tau$-orbit. Indeed, any $\sigma\in \Stab_\tau(T)$ induces an isomorphism between two rooted subtrees $T_u$ and $T_v$ in $T$ that have $u$ and $v$ as their root vertices, respectively.

\begin{definition}
	We define $T/\tau$
	to be the weighted rooted tree obtained from $\bar T/\tau$ by attaching $n_v$ inputs to the image of each $v$ in $ V(\bar T/\tau)=V(T)/\tau$.
\end{definition}
Roughly speaking, $T/\tau$ is obtained from $T$ by identifying all the weighted rooted subtrees $T_v$ for $v$ having the same images in $V(T)/\tau$.

\medskip

It is natural to expect that $\sigma\in \Stab_\tau(T)$ induces an element in $\Stab_e(T/\tau)$, since its appropriate power (depending on $v$) will stabilize each vertex $v$ and permute the $n_v$ inputs attached to $v$. 
In what follows, we construct a map 
\[\Stab_\tau(T)\lra \Stab_e(T/\tau)\]
and show that it has some useful properties. 
 
Let $I$ be the set of labels on the inputs of $T$, and for each $v\in V(T)$, let $I_v$ be the set of the labels on the inputs attached to $v$. Hence, $I:=\bigsqcup_{v\in V(T)}I_v$,
and $\Stab_\tau(T)$ is a subset of the symmetric group $\symS_I$ on $I$.   

Let $d_v>0$ be the minimal positive integer such that $\tau^{d_v}(v)=v$ for each $v\in V(T)$. Then there exists a map 
 \[\varphi_v:\Stab_\tau(T)\lra  \symS_{I_v}\subset \Stab(T_v)\]
 that sends $\sigma$ to $\sigma^{d_v}$ restricted to the set $I_v$ of the inputs attached to $v$. 
As $\Stab_e(T/\tau)\cong \symS_{\lambda(T/\tau)}$,
the desired map can be obtained as the product of the maps $\varphi_v$ where $v$ runs over representatives of $V(T)/\tau$. 

Choose representatives $v_1,\cdots,v_p\in V(T)$, one for each $\tau$-orbit, so that $V(T)/\tau\cong \{v_1,\cdots, v_p\}$. 
We consider the product map
\beq \label{eq:Stab.quotient.map2}\prod_{j=1}^p\varphi_{v_j}:~\Stab_\tau(T)\lra \prod_{j=1}^p \symS_{I_{v_j}}=\symS_{\lambda(T/\tau)}, \eeq 
where $I$ decomposes as $I=\bigsqcup_{j=1}^p I_j$ with
	\[I_j:=I_{v_j}\sqcup I_{\tau(v_j)}\sqcup \cdots \sqcup I_{\tau^{d_j-1}(v_j)},\]
	for each $j$, where $d_j:=d_{v_j}$.
	This map has the following properties.

\begin{lemma}\label{l:quotient.tree}
	The map \eqref{eq:Stab.quotient.map2} is length-preserving. Moreover, it is surjective and its fibers have the same cardinalities. In particular, for every $\ell>0$,
	\[\frac{\lvert\Stab^\ell_\tau(T)\rvert}{\lvert\Stab(T)\rvert}\leq \frac{\lvert \Stab_e^\ell(T/\tau)\rvert}{\lvert \Stab(T/\tau)\rvert}.\]
\end{lemma}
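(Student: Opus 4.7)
I will verify the three assertions in sequence: length-preservation, surjectivity with equal fibers, and the final inequality.

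For length-preservation, I will observe that any $\sigma \in \Stab_\tau(T)$ sends $I_v$ to $I_{\tau(v)}$, so each subset $I_j = \bigsqcup_{i=0}^{d_j-1} I_{\tau^i(v_j)}$ is $\sigma$-stable, and for $x \in I_{v_j}$ the $\sigma$-orbit of $x$ cyclically visits $I_{v_j}, I_{\tau(v_j)}, \ldots, I_{\tau^{d_j-1}(v_j)}$, returning to $I_{v_j}$ precisely every $d_j$ steps. Consequently, intersection with $I_{v_j}$ establishes a bijection between $\sigma$-orbits on $I_j$ and $\sigma^{d_j}|_{I_{v_j}} = \varphi_{v_j}(\sigma)$-orbits on $I_{v_j}$, giving $\ell(\sigma) = \ell\bigl(\prod_j \varphi_{v_j}(\sigma)\bigr)$.

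For surjectivity and equal fiber sizes, I will parameterize $\Stab_\tau(T)$ by tuples of ``edge bijections'' $\sigma^{(i)}_j : I_{\tau^i(v_j)} \to I_{\tau^{i+1}(v_j)}$ for $1 \le j \le p$ and $0 \le i \le d_j - 1$, each contributing $n_{v_j}!$ choices since $n_{\tau^i(v_j)} = n_{v_j}$. Since $\varphi_{v_j}(\sigma) = \sigma^{(d_j-1)}_j \circ \cdots \circ \sigma^{(0)}_j$, given any target $(\sigma_1, \ldots, \sigma_p)$ the choices $\sigma^{(0)}_j, \ldots, \sigma^{(d_j-2)}_j$ are free while $\sigma^{(d_j-1)}_j$ is uniquely determined. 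This yields surjectivity and a common fiber size $f = \prod_j n_{v_j}!^{d_j - 1}$.

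For the final inequality, combining the previous steps yields $|\Stab^\ell_\tau(T)|/|\Stab_\tau(T)| = |\Stab^\ell_e(T/\tau)|/|\Stab_e(T/\tau)|$. Writing $\image_T$ for the image of the natural map $\Stab(T) \to \Aut(F(T))$, the coset-type decomposition $\Stab(T) = \bigsqcup_{\tau' \in \image_T} \Stab_{\tau'}(T)$ with blocks of equal size shows $|\Stab_\tau(T)|/|\Stab(T)| = 1/|\image_T|$, and analogously $|\Stab_e(T/\tau)|/|\Stab(T/\tau)| = 1/|\image_{T/\tau}|$. Hence the claim reduces to $|\image_T| \ge |\image_{T/\tau}|$, which I will establish by constructing a surjection from the normalizer $N_{\image_T}(\langle\tau\rangle)$ onto $\image_{T/\tau}$ via passage to the induced action on $F(T)/\tau$, with $\langle\tau\rangle$ contained in the kernel.

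The main obstacle I anticipate lies in this last step: given $\bar\sigma \in \image_{T/\tau}$, one must construct a lift to an automorphism of $F(T)$ that both normalizes $\langle\tau\rangle$ and preserves the input-count structure (so that the lift actually lands in $\image_T$). Such a lift is obtained by choosing its value on a representative of each $\tau$-orbit of vertices and propagating via $\tau$, but verifying that the result preserves weights and tree structure across orbits, and that the input counts are respected, requires careful bookkeeping.
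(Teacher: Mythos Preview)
Your arguments for length-preservation and for surjectivity with equal fibers are correct and essentially match the paper's proof. The paper phrases the second point via an $\symS_{\lambda(T/\tau)}$-equivariance (left multiplication on the first coordinate makes the map equivariant, hence all fibers are in bijection), while you count directly by freezing all but one of the edge bijections; the two arguments are interchangeable.

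Your reduction of the final inequality to $|\image_T|\ge|\image_{T/\tau}|$ is also exactly what the paper does, and the paper records this step simply as the inequality
\[
\frac{|\symS_{\lambda(T)}|}{|\symS_{\lambda(T/\tau)}|}\le\frac{|\Stab(T)|}{|\Stab(T/\tau)|},
\]
invoking Lemma~\ref{l:Stab.decomp}. However, your proposed mechanism for proving $|\image_T|\ge|\image_{T/\tau}|$---a surjection from the normalizer $N_{\image_T}(\langle\tau\rangle)$ onto $\image_{T/\tau}$---does not work. The obstruction you flagged is real and fatal: elements of $\image_{T/\tau}$ may permute $\tau$-orbits of different sizes, and such permutations admit no lift to $\Aut(F(T))$ normalizing $\langle\tau\rangle$.

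Concretely, take $\bar T$ to be a root $v_0$ of weight $0$ with six leaf children $v_1,\ldots,v_6$ of weight $1$, and let $T$ carry three inputs at each $v_i$ and none at $v_0$. Then $\image_T=\symS_6$. For $\tau=(v_1v_2)(v_3v_4)$ the quotient $T/\tau$ has four identical children at the root, so $\image_{T/\tau}=\symS_4$. But $N_{\symS_6}(\langle\tau\rangle)$ is the centralizer of $\tau$, of order $16$, and its image in $\symS_4$ is only the $\symS_2\times\symS_2$ that separately permutes the two size-$2$ orbits and the two fixed points. Thus $|N_{\image_T}(\langle\tau\rangle)|=16<24=|\image_{T/\tau}|$, so no surjection exists---yet the desired inequality $720\ge 24$ still holds. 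You will need a different route for this last step (for instance, an inductive comparison using the wreath-product decomposition \eqref{eq:decomp.Stab} of $\Stab(T)$ and $\Stab(T/\tau)$).
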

The inequality will be useful when we prove Theorem~\ref{thm:mu.coeff} in the next subsection, and easily follows from the first two properties, as we will see.

Lemma~\ref{l:quotient.tree} will easily follow from a better understanding of the image of $\Stab_\tau(T)\subset \symS_I$.
	Note that every $\sigma\in \Stab_\tau(T)$ acts on $I_j$ compatibly with the $\tau$-action on the orbit of $v_j$.
	In particular, $\Stab_\tau(T)\subset \symS_I$ factors through 
	\beq\label{eq:Stab.subgroup}\prod_{j=1}^p\left(\symS_{I_{v_j}}\times \symS_{I_{\tau(v_j)}}\times \cdots\times \symS_{I_{\tau^{d_j-1}(v_j)}}\right)\rtimes\langle \tau\rangle \cong \prod_{j=1}^p\left( \symS_{n_j}^{d_j}\rtimes \Z/{d_j}\Z\right)\eeq
	which is a subgroup of $\prod_{j=1}^p \symS_{I_j}\subset \symS_I$, and even further, it factors through
	\beq\label{eq:Stab.subset}
	\prod_{j=1}^p\left(\symS_{I_{v_j}}\times \symS_{I_{\tau(v_j)}}\times \cdots\times \symS_{I_{\tau^{d_j-1}(v_j)}}\right)\rtimes\{ \tau\} \eeq
	as a subset of  \eqref{eq:Stab.subgroup}.
	Here by abuse of notation, we denote any lift in $\symS_{I_j}$ of $\tau$ (restricted to the $\tau$-orbit of $v_j$) again by $\tau$. Note that we may pick any such lift, since any two lifts differ only by reordering labels in each $I_{\tau^i(v_j)}$. 
It is now clear that $\Stab_\tau(T)$ is equal to \eqref{eq:Stab.subset}, since
both have the same cardinalities 
	$\lvert \Stab_\tau(T)\rvert=\lvert \symS_{\lambda(T)}\rvert=\prod_{j=1}^p\lvert \symS_{n_j}^{d_j}\rvert$.

    The above discussion is illustrated in the following example. 

\begin{example}
	Let $T$, $\bar T$, $I$ and $I_{v_j}$ with $0\leq j\leq 3$ be as in Examples~\ref{ex:wrt.trileg} and~\ref{ex:tau}.
	The element $\tau=(1,2,3)\in \Aut(\bar T)=\symS_3$ 
	acts on $v_j$ by $v_1\mapsto v_2\mapsto v_3$. The lift 
	of $\tau$ in $\Stab(T)\subset\symS_{I}$ via the above splitting is then 
	\beq\label{eq:tau.ex}
	\tau=(1,2,3)(4,5,6)(7,8,9) ~\in ~\symS_{I}= \symS_9,\eeq
	which we denote also by $\tau$ by abuse of notation. 
	It follows from this that
	\[\Stab_\tau(T)=\left(\symS_{I_{v_1}}\times \symS_{I_{v_2}}\times \symS_{I_{v_3}}\right)\cdot\tau ~\subset~ \Stab(T)~\subset~ \symS_{I}=\symS_9 \]
	as described in \eqref{eq:Stab.subset}, where $p=1$ in this case.
	
	Moreover, the map \eqref{eq:Stab.quotient.map2} is described as follows. Let $\sigma \in \Stab_\tau(T)$. Express $\sigma:=(\sigma_1,\sigma_2,\sigma_3;\tau)$ as an element in \eqref{eq:Stab.subset}. Note that $\sigma_j:=\sigma_{v_j} \in \symS_{I_{v_j}}$ is chosen so that $\sigma(i)=\sigma_{\tau(v_j)}(\tau(i))$ if $i\in I_{v_j}$. 
    By taking its third power, we obtain
	\[\begin{split}
		\sigma^3&=\big(\sigma_1,~\sigma_2,~\sigma_3;~\tau\big)\cdot \big(\sigma_1,~\sigma_2,~\sigma_3;~\tau\big)\cdot \big(\sigma_1,~\sigma_2,~\sigma_3;~\tau\big)\\
		&=\big(\sigma_1\varphi_\tau(\sigma_3),~\sigma_2\varphi_\tau(\sigma_1),~\sigma_3\varphi_\tau(\sigma_2);~\tau^2\big)\cdot \big(\sigma_1,~\sigma_2,~\sigma_3;~\tau\big)\\
		&=\big(\sigma_1\varphi_\tau(\sigma_3)\varphi_{\tau^2}(\sigma_2),~\sigma_2\varphi_\tau(\sigma_1)\varphi_{\tau^2}(\sigma_3),~\sigma_3\varphi_\tau(\sigma_2)\varphi_{\tau^2}(\sigma_1);~e\big)\\
		&=\big(\sigma_1\tau\sigma_3\tau\sigma_2\tau,~\sigma_2\tau\sigma_1\tau\sigma_3\tau,~\sigma_3\tau\sigma_2\tau\sigma_1\tau;~e\big)
	\end{split}
	\]
	in $\Stab(T)$, where $\varphi_\tau(\sigma_j):=\tau\sigma_j\tau^{-1}\in \symS_{I_{v_{\tau(j)}}}$. Thus, $\sigma^3\in \symS_{I_{v_1}}\times \symS_{I_{v_2}}\times \symS_{I_{v_3}}$. The map \eqref{eq:Stab.quotient.map2} is obtained by composing this with the first projection map
	\beq\label{eq:Stab.quotient.map.ex}\Stab_\tau(T)\xrightarrow{~(-)^3~}\symS_{I_{v_1}}\times \symS_{I_{v_2}}\times \symS_{I_{v_3}}\xrightarrow{~pr_1~}\symS_{I_{v_1}}\eeq
	that sends $(\sigma_1,\sigma_2,\sigma_3;\tau)$ to $\sigma_1\tau \sigma _3\tau\sigma_2\tau \in \symS_{I_{v_1}}$.
	
	For example, when $\sigma=(1,5,9,4,2,6,7,8,3)\in \Stab_\tau(T)\subset \symS_{I}=\symS_9$, it can be written as $\sigma=(\sigma_1,\sigma_2,\sigma_3;\tau)$ with $\sigma_1=(4,7)$, $\sigma_2=(2,5)$, $\sigma_3=(3,6,9)$ and $\tau$ as in \eqref{eq:tau.ex}. The image of $\sigma$ under \eqref{eq:Stab.quotient.map.ex} is the first projection of $\sigma^3=((1,4,7),(2,8,5),(3,9,6))$, which is $(1,4,7)=\sigma_1\tau \sigma_3\tau\sigma_2\tau\in \symS_{I_{v_1}}$. 
	
	 One can also see that composing the third power map with the second or the third projection, rather than the first projection, yields maps, which are conjugate to each other. 
\end{example}
\smallskip

Now we prove Lemma~\ref{l:quotient.tree}.	
\begin{proof}[Proof of Lemma~\ref{l:quotient.tree}]
    For each $\sigma\in \Stab_\tau(T)$, the length of $\sigma$ is equal to the number of its orbits in $I$ while the length of its image under \eqref{eq:Stab.quotient.map2} is equal to the number of the orbits in $I_{v_1}\sqcup \cdots \sqcup I_{v_p}$.
	Since the $\sigma$-orbits in $I_j$ are in a bijective correspondence with the $\sigma^{d_j}$-orbits in $I_{v_j}$ for every $j$, \textbf{\eqref{eq:Stab.quotient.map2}} is length-preserving. 
	
	For the second property, we will  
	endow an action of $\symS_{\lambda(T/\tau)}= \prod_{j=1}^p \symS_{I_{v_j}}$ on $\Stab_\tau(T)$ so that \eqref{eq:Stab.quotient.map2} is $\symS_{\lambda(T/\tau)}$-equivariant, with respect to the left multiplication on the target $\symS_{\lambda(T/\tau)}$. 
	Since the left multiplication is transitive, 
	this will imply that all the fibers are isomorphic to each other.
	
	By letting $n_j=\lvert I_{v_j}\rvert$,	the map 
	$\varphi_{v_j}$ in \eqref{eq:Stab.quotient.map2} 
	is of the form
	\[\symS_{n_j}^{d_j}\rtimes \{\tau\}\hookrightarrow \symS_{n_j}^{d_j}\rtimes \Z/d\Z\lra \symS_{n_j}^{d_j}\xrightarrow{~pr_1~}\symS_{n_j}\]
	that sends $\sigma:=(\sigma_1,\ldots, \sigma_d;\tau)$ with $\sigma_i\in S_{n_j}$ and $\tau=(1,\ldots, d)\in \Z/d\Z$ to
	\[\begin{split}
		\sigma^d
		&=\left(\sigma_1\varphi_\tau(\sigma_d)\cdots \varphi_{\tau^{d-1}}(\sigma_2),~\ldots, ~\sigma_d\varphi_{\tau(\sigma_{d-1})}\cdots \varphi_{\tau^{d-1}}(\sigma_1)~;~e\right) \\
		&=(\sigma_1\tau \sigma_d\tau \cdots \tau \sigma_2\tau , ~\ldots, \sigma_d\tau \sigma_{d-1}\tau \cdots \tau \sigma_1\tau ~;~e) 
		\quad \in ~\symS_{n_j}^{d_j}
	\end{split}
	\]
	and then to the first entry $\sigma_1\tau \sigma_d\tau \cdots \tau \sigma_2\tau$. Here, the $j$-th entry of $\sigma^d$ is 
	\[\sigma_j\varphi_\tau (\sigma_{\tau^{-1}(j)})\cdots \varphi_{\tau^{d-1}}(\sigma_{\tau^{-(d-1)}(j)})=\sigma_j\tau \sigma_{\tau^{-1}(j)}\tau\cdots \tau \sigma_{\tau^{-(d-1)}(j)}\tau \quad \in \symS_{I_{v_{\tau(j)}}}, \]
	and we set $\varphi_\tau(a_j):=\tau a_j \tau^{-1}\in \symS_{I_{v_{\tau(j)}}}$. 
	Thus, this map is $\symS_{n_j}$-equivariant with respect to the left multiplication (only) on the first entry $\sigma_1$. In particular, \eqref{eq:Stab.quotient.map2} is $\prod_{j=1}^p\symS_{I_{v_j}}$-equivariant.
	
	Finally, the last inequality in the assertion follows from  
	\[\frac{\lvert \Stab_\tau^\ell(T)\rvert}{\lvert\Stab_e^\ell(T/\tau)\rvert}= \frac{\lvert \Stab_\tau(T) \rvert}{\lvert \Stab_e(T/\tau)\rvert}= \frac{\lvert \symS_{\lambda(T)}\rvert}{\lvert \symS_{\lambda(T/\tau)}\rvert}\leq \frac{\lvert\Stab(T)\rvert}{\lvert\Stab(T/\tau)\rvert}\]
	where the last inequality follows from Lemma~\ref{l:Stab.decomp}.
\end{proof}

\subsection{Proof of Theorem~\ref{thm:mu.coeff}}\label{S7.4}	
	Let us complete the proof. Recall that
	\[
    \charQ_{n,\bar T}^{n-j}=\sum_{T\in F_n^{-1}(\bar T)}\frac{\lvert \Stab_e^{n-j}(T)\rvert}{\lvert \Stab(T)\rvert} + \sum_{\tau\neq e \in \Aut(\bar T)}\sum_{T\in F_n^{-1}(\bar T)}\frac{\lvert \Stab_\tau^{n-j}(T)\rvert}{\lvert \Stab(T)\rvert}\]
	where the asymptotic formula for the first summand in the right hand side is given in Proposition~\ref{p:summand.identity}.
	When $\tau\neq e$, by Lemma~\ref{l:quotient.tree}, we have
	\[
	0\leq \sum_{T\in F_n^{-1}(\bar T)}\frac{\lvert\Stab^{n-j}_\tau(T)\rvert}{\lvert\Stab(T)\rvert}\leq \sum_{T\in F_n^{-1}(\bar T)}\frac{\lvert \Stab_e^{n-j}(T/\tau)\rvert}{\lvert \Stab(T/\tau)\rvert}.\]
	Moreover, whenever the summand is nonzero on the right hand side, the number $n'$ of inputs of $T/\tau$ is $\lvert\lambda(T/\tau)\rvert$, thus greater than or equal to $n-j$.
	This shows in particular that the right hand side is bounded above by
	\[\sum_{n-j\leq n'<n}\sum_{T'}\frac{\lvert\Stab^{n'-j'}_e(T')\rvert}{\lvert\Stab(T')\rvert}\]
	where $T'$ runs over all the trees obtained by attaching $n'$ inputs to the vertices of $\bar T/\tau$ in the way that they do not necessarily satisfy the valency conditions, and $j':=j-(n-n')$ so that $n'-j'=n-j$.
	
	Since $\tau\neq e$, we have the strict inequalities $j'<j$ and $\lvert V(\bar T/\tau)\rvert <\lvert V(\bar T)\rvert$. 
	Hence, 
	this is dominated 
	by the summand corresponding to $\tau=e$ studied in Proposition~\ref{p:summand.identity}. 
	Combining all these results, we have
	\beq\label{eq:mu.nk.coeff}
    \charQ_{n,k}^{n-j}
    =\sum_{\substack{\bar T\in \sT_k}}
    \charQ_{n,\bar T}^{n-j}= c_k\cdot A_{j,k}(n) 
	+O\left(\frac{k^nn^{2j}}{n!}\right).\eeq 
	This completes our proof of the asymptotic formula for $\charQ_{n,k}$ in Theorem~\ref{thm:mu.coeff}. 
	
	\smallskip
	
	We prove the  case of $\charP_{n,k}$.
	First note that by definition,
	\[\sum_{\substack{n_1+n_2=n,\\j_1+j_2=j}}A_{j_1,k_1}(n_1)\cdot A_{j_2,k_2}(n_2)=A_{j,k_1+k_2+1}(n)\]
	for any $k_1,k_2$ and $j\geq0$. Therefore, for $k\geq0$,
	\[\begin{split}
		&\lim_{n\to \infty}\frac{n!}{(k+1)^{n-j}c(n,n-j)}\sum_{\substack{n_1+n_2=n,\\j_1+j_2=j}}A_{j_1,k_1}(n_1)\cdot A_{j_2,k_2}(n_2)\\
		&=\lim_{n\to \infty}\frac{n!}{(k+1)^{n-j}c(n,n-j)}\cdot A_{j,k_1+k_2+1}(n)\\
	&= \lim_{n\to \infty}\left(\frac{k_1+k_2+2}{k+1}\right)^{n-j}=\begin{cases}
			1 &\text{ if }~k_1+k_2=k-1\\
			0 &\text{ if }~k_1+k_2<k-1.
		\end{cases}
	\end{split}
	\]
    Now read the coefficients of $m^{n-j}$ off \eqref{eq:walcrossing.charpoly},
    multiply them by $\frac{n!}{(k+1)^{n-j}c(n,n-j)}$ and take their limits as $n\to \infty$. Then by \eqref{eq:mu.nk.coeff} and the above observation, 
	\[\lim_{n\to\infty}\frac{n!\cdot \left(\charP_{n,k}^{n-j}+\charP_{n,k-1}^{n-j}\right)}{(k+1)^{n-j}c(n,n-j)}\]
	is equal to 
	\[
	\begin{split}
		& c_k-\frac{1}{2}\lim_{n\to \infty}\sum_{\substack{j_1+j_2=j,\\n_1+n_2=n,\\k_1+k_2=k-1}}\frac{n!\cdot 
        \charQ_{n_1,k_1}^{n_1-j_1}\cdot 
        \charQ_{n_2,k_2}^{n_2-j_2}}{(k+1)^{n-j}c(n,n-j)}\\
		&= c_k -\frac{1}{2}\lim_{n\to \infty}\sum_{\substack{j_1+j_2=j,\\n_1+n_2=n,\\k_1+k_2=k-1}}\frac{n!\cdot c_{k_1}\cdot A_{j_1,k_1}(n_1)\cdot c_{k_2}\cdot A_{j_2,k_2}(n_2)}{(k+1)^{n-j}c(n,n-j)} \\
		&= c_k -\frac{1}{2}\lim_{n\to \infty}\sum_{\substack{k_1+k_2=k-1}}\frac{c_{k_1}c_{k_2}\cdot n!\cdot A_{j,k}(n)}{(k+1)^{n-j}c(n,n-j)} =c_k-\frac{1}{2}\sum_{k_1+k_2=k-1}c_{k_1}c_{k_2},
	\end{split}
	\]
	which is equal to $d_k$.
	This completes our proof of Theorem~\ref{thm:mu.coeff}.
	\hfill $\square$

\bibliographystyle{amsplain}

\end{document}